\numberwithin{equation}{section}
\theoremstyle{plain}
\newtheorem{thm}{Theorem}[section]
\newtheorem*{thm*}{Theorem}
\newtheorem{lemma}[thm]{Lemma}
\newtheorem{proposition}[thm]{Proposition}
\theoremstyle{definition}
\newtheorem{definition}[thm]{Definition}
\newtheorem*{definition*}{Definition}
\newtheorem{assumption}{Assumption}
\newtheorem{remark}[thm]{Remark}
\newtheorem*{remark*}{Remark}
\newcommand{\dy}{\mathrm{d}y}
\newcommand{\F}{\mathcal{F}}
\newcommand{\NN}{\mathcal{N}}
\newcommand{\E}{\mathbb{E}}
\newcommand{\PP}{\mathbb{P}}
\newcommand{\PPP}{\mathcal{P}}
\newcommand{\II}{\mathbbm{1}}
\newcommand{\field}[1]{\mathbbm{#1}}
\newcommand{\R}{\field{R}}
\newcommand{\C}{\field{C}}
\newcommand{\N}{\field{N}}
\newcommand{\ZZ}{\field{Z}}
\newcommand{\Var}{\mathrm{Var}}
\newcommand{\tmix}{t_{\mathrm{mix}}}
\newcommand{\gammaps}{\gamma_{\mathrm{ps}}}
\newcommand{\mtx}[1]{\bm{#1}}
\newcommand{\dtv}{d_{\mathrm{TV}}}
\newcommand{\diam}{\mathrm{diam\,}}
\newcommand{\econst}{\mathrm{e}}
\newcommand{\hatsigmamax}{\hat{\sigma}_{\mathrm{max}}}
\newcommand{\Lip}{\mathrm{Lip}}
\begin{document}

\begin{frontmatter}

\title{Mixing and Concentration by Ricci Curvature}
\runtitle{Mixing and Concentration by Ricci Curvature}

\begin{aug}
\author{\fnms{Daniel} \snm{Paulin} \ead[label=e1]{paulindani@gmail.com}} 
\runauthor{D. Paulin}

\affiliation{National University of Singapore}

\address{
Department of Statistics \& Applied Probability, \\
National University of Singapore, 6 Science Drive 2, Singapore, 117546, SG.,\\
\printead{e1}}
\end{aug}

\begin{keyword}[class=AMS]
\kwd{60E15}
\kwd{60J05}
\kwd{60J10}
\kwd{68Q87}
\end{keyword}

\begin{keyword}
\kwd{Ricci curvature}
\kwd{Markov chain}
\kwd{concentration inequality}
\kwd{spectral gap}
\kwd{mixing time}
\end{keyword}

\begin{abstract}
We generalise the coarse Ricci curvature method of Ollivier by considering the coarse Ricci curvature of multiple steps in the Markov chain. This implies new spectral bounds and concentration inequalities. We also extend this approach to the bounds for MCMC empirical averages obtained by Joulin and Ollivier. We prove a recursive lower bound on the coarse Ricci curvature of multiple steps in the Markov chain, making our method broadly applicable. Applications include the split-merge random walk on partitions, Glauber dynamics with random scan and systemic scan for statistical physical spin models, and random walk on a binary cube with a forbidden region.
\end{abstract}
\end{frontmatter}

\maketitle

\section{Introduction}
The coarse Ricci curvature of a Markov chain with metric state space $(\Omega,d)$, and kernel $P(x,\mathrm{d}z)$ was defined in \cite{Ollivier2} as
\[\kappa(x,y)=1-\frac{W_{1}(P_x,P_y)}{d(x,y)} \text{ for }x\ne y, \text{ and } \kappa=\inf_{x,y\in \Omega, x\ne y}\kappa(x,y).\]
where $P_x$ denotes the measure $P(x,\mathrm{d}z)$, and $W_1$ denotes the Wasserstein distance of $P_x$ and $P_y$. 

It is known that for reversible chains, $\kappa$ gives a lower bound on the spectral gap: $\gamma\ge \kappa$. It can be also used to bound the mixing time of the chain (known as the Bubley-Dyer path coupling method, see \cite{bubley1997pathcoupling}). The name curvature comes from the fact that it is linked to the geometric definition of Ricci curvature. One of the motivating examples of  \cite{Ollivier2} is the well known Gromov-L\'evy theorem, which it recovers (up to a small constant factor).

When considering Lipschitz functions on $\Omega$ under the stationary distribution $\pi$ of the chain, it is possible to prove variance and concentration bounds, with constants depending on $1/\kappa$,  the typical step size of the Markov chain, and the Lipschitz coefficient. In addition to this, one can show concentration inequalities for MCMC empirical averages of Lipschitz functions (see \cite{Ollivier3}). The coarse Ricci curvature approach has been found to give the right order of concentration and spectral bounds in numerous examples. However, there were also cases where it has not succeeded to give bounds of the correct order. One of them is the split-merge walk on partitions (also called the coagulation-fragmentation chain, see \cite{Diaconissplitmerge} for references), where $\kappa=\mathcal{O}(1/N^2)$, which is too small, since $\gamma=\mathcal{O}(1/N)$ in this case.
In order to extend the coarse Ricci curvature approach to this situation, we define the multi-step coarse Ricci curvature as
\[\kappa_k(x,y)=1-\frac{W_{1}(P_x^k,P_y^k)}{d(x,y)} \text{ for }x\ne y, \text{ and } \kappa_k=\inf_{x,y\in \Omega, x\ne y}\kappa_k(x,y),\]
which is the coarse Ricci curvature of the $k$ step Markov kernel $P^k$.
We extend the spectral and concentration bounds to this case. 
We show that for reversible chains, for any $k\in \N$, the spectral gap satisfies $\gamma\ge \kappa_k/k$, and concentration inequalities hold with constants depending on $\sum_{k=0}^{\infty}(1-\kappa_k)$. In particular, this allows us to recover bounds of the correct order of magnitude for the split-merge walk on partitions.

We propose several approaches to bound $\kappa_k$. The first approach is applicable when the mixing time of the chain can be bounded, and the state space is discrete. In this case, we are  able to obtain bounds on $\kappa_k$ for sufficiently large $k$, which in turn can imply concentration bounds. We illustrate this with an example about the Curie-Weiss model in critical phase.
The second approach gives a recursive lower bound on $\kappa_k$. If the curvature is positive in most of the state space, and negative in a small part, then in some situations, this recursive bound can show that $\kappa_k$ becomes positive for sufficiently large $k$. An example is given about a random walk on a binary cube with a forbidden region.

Now we explain the organisation of this paper. In Section \ref{SecPreliminaries}, we introduce the main definitions.
Section \ref{SecResults} contains our results, in particular, new spectral bounds, concentration inequalities, and moment bounds involving the multi-step coarse Ricci curvature. We also state propositions for bounding $\kappa_k$.
In Section \ref{SecApplications}, we present three applications, the split-merge walk on partitions,  Glauber dynamics with random and systemic scan for statistical physical models, and random walk on a binary cube with a forbidden region. Finally, Section \ref{SecProofs} contains the proofs of our results.

We end the introduction by a few additional remarks about the related literature.
The coarse Ricci curvature approach originates from semigroup tools, which have been used previously in the literature to prove concentration inequalities for Lipschitz functions of random variables distributed according to the stationary distribution of a Markov process (see \cite{Ledoux}, Section 2.3). These can be used to prove concentration for the Gaussian measure, and more generally,  for log-concave densities. For a recent extension of the coarse Ricci curvature to continuous time Markov processes, see \cite{veysseire2012coarse}, and \cite{veysseire2012courbure}. \cite{Veysseirenoneg} obtains concentration bounds in the case when the coarse Ricci curvature is zero. The coarse Ricci curvature has been used previously, but without geometric interpretation, to bound mixing times, known as the Bubley-Dyer path coupling method.
In this sense, it has been also extended to consider multiple steps in the Markov chain, in \cite{DyerPathcouplingextension}, see also \cite{AllanSlyexponential}. The coarse Ricci curvature approach was adapted to graphs in \cite{bauer2011ollivier} and \cite{bauer2013li}, and to adaptive MCMC in \cite{pillai2013finite}.

There is another popular curvature notion called the Sturm-Lott-Villani curvature (\cite{LottVillani}, \cite{Sturm1}). \cite{ollivier2013visual} gives a visual introduction to various curvature definitions, and compares them on numerous examples. 
In the case of Riemannian manifolds, \cite{Milmanisotransport} studies the relation of isoperimetric, functional and transportation cost inequalities, and \cite{milman2011sharp} generalises the Gromov-L\'evy theorem to compact manifolds with negative curvature. This paper was motivated by some of the problems of the survey \cite{Ollivier1}.
Finally, we note that after we have completed this work, \cite{Luczakconcentration} have been bought to our attention. It considers similar ideas as ours, and obtains concentration and spectral bounds depending on the contraction properties of the measures describing multiple steps in the Markov chain. The approach was further developed in \cite{luczak2012quantitative}, \cite{brightwell2013fixed} and \cite{brightwell2013extinction}. Our results in this paper are more precise, since they take into account the typical size of the jump of the Markov chain, as well as the dimension of the state space, which were not considered in the earlier work. In addition, we also show a recursive bound on the multi-step coarse Ricci curvature, which makes our method easier to apply in practice.

\section{Preliminaries}\label{SecPreliminaries}
We will work with stationary, time homogeneous Markov chains $(X_i)_{i\in \N}$ with transition kernel $P(x,\dy)$ taking values in a Polish metric space $(\Omega,d)$. We will denote the stationary distribution of the chain by $\pi$. The expected value of a function $f:\Omega\to \R$ under $\pi$ will be denoted by $\E_{\pi}(f)$. The jump measure when starting from $x$ will be denoted by $P_x$, that is, $P_x(\dy)=P(x,\dy)$. For $k\ge 0$, the $k$-step transition kernel will be denoted by $P^k(x,\dy)$ (in particular, $P^0(x,\dy)=\delta_x(\dy)$, the Dirac-measure concentrated on $x$).

We define the $L^1$ transportation distance (Wasserstein distance) of two measures on $(\Omega,d)$ as
\begin{equation}
W_1(\mu_1,\mu_2):=\inf_{(X,Y)}\E(d(X,Y)),
\end{equation}
where the infimum is taken over all  couplings $(X,Y)$ of $\mu_1$ and $\mu_2$ (i.e. $(X,Y)$ is a random vector taking values on $\Omega\times \Omega$, with marginals $\mu_1$, and $\mu_2$).

To avoid some complications, similarly to Definition 1 of \cite{Ollivier2}, we make the following technical assumptions on the Markov kernel $P(x,\mathrm{d}y)$.
\begin{assumption}[Assumptions on the Markov kernel]\label{assumption1}\hspace{0mm}\\
\begin{enumerate}
\item The measure $P(x,\mathrm{d}y)$ depends measurably on $x\in \Omega$. \\
\item For every $x\in \Omega$, $W_1(\delta_x,P_x)<\infty$.
\end{enumerate}
\end{assumption}

\subsection{Ricci curvature}
The following definition is a generalisation of Ollivier's coarse Ricci curvature (Definition 3 of \cite{Ollivier2}).
\begin{definition}[Multi-step coarse Ricci curvature]\label{defmultistepRicci}
Let $(\Omega, d)$ and $P(x,\dy)$ be as above. Then for $k\in \N$, $x,y\in \Omega$, we let
\begin{equation}\kappa_k(x,y):= 1-\frac{W_1(P^k_x,P^k_y)}{d(x,y)} \text{ if }x\ne y, \text{ and }\kappa_k(x,y):=1 \text{ if } x=y,
\end{equation}
and define the \emph{multi-step coarse Ricci curvature} as $\kappa_k:=\inf_{x,y\in \Omega} \kappa_k(x,y)$.
\end{definition}

For $k=1$, this is just the usual definition of coarse Ricci curvature,  that is, $\kappa=\kappa_1$. For $k=0$, we have $\kappa(x,y)=0$ for $x\ne y$. Then by Proposition 25 of \cite{Ollivier2} it follows that the multi-step coarse Ricci curvature satisfies the inequality 
\begin{equation}\label{kappakleq}1-\kappa_{k+l}\le (1-\kappa_k)(1-\kappa_l) \quad \text{ for }k,l\in \N.\end{equation}

In the case when $\kappa>0$, inequality \eqref{kappakleq} implies that $1-\kappa_i\le  (1-\kappa)^i$. More generally, if we assume that $\kappa>-\infty$, and $\kappa_k>0$ for some $k\in \N$, then \eqref{kappakleq} implies that for any $i\in \N$,
\begin{equation}\label{kappakleq2}1-\kappa_{i} \le (1-\kappa_k)^{\lfloor i/k\rfloor}\cdot M,\end{equation}
for $M:=\sup_{i\in \N}(1-\kappa_i)=\sup_{0\le i<k}(1-\kappa_i)\le (1-\kappa)^{k-1}$, meaning that under the assumption that $\kappa_k>0$, $1-\kappa_i$ decreases at least geometrically fast in $i$.

We note that the condition $\kappa_k>0$ for some $k\in \N$ is weaker than the condition $\kappa>0$ required in the results of \cite{Ollivier2}. This weaker condition also allows some negative curvature (that is, $\kappa<0$). As expected, there is a price to be paid for this weaker assumption. In particular, when compared to the results of \cite{Ollivier2}, the dependence of the constants in our results on $\kappa$ will usually change from $1/\kappa$ to $M k /\kappa_k$. Nevertheless, this weaker assumption enlarges the scope of applicability of the coarse Ricci curvature approach, and as we shall see in the examples, it often allows us to recover results of the correct order of magnitude.

The following proposition shows that the condition $\kappa_k>0$ and some additional weak assumptions guarantee the existence and uniqueness of a stationary distribution $\pi$ (this generalises Corollary 21 of \cite{Ollivier2}).
\begin{proposition}[Existence and uniqueness of stationary distribution]\label{propstatdist}
Let $P(x,\mathrm{d}y)$ be a Markov kernel on a Polish metric space $(\Omega, d)$. Assume that
\begin{enumerate}
\item $\kappa>-\infty$, 
\item $\kappa_k>0$ for some $k\in \N$,
\end{enumerate}
Then the Markov kernel has a unique invariant distribution.
\end{proposition}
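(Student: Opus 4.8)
The plan is to realise $\pi$ as the Banach fixed point of the $k$-step map $\mu\mapsto\mu P^k$, exploiting that positivity of $\kappa_k$ turns this map into a strict $W_1$-contraction. Fix a base point $o\in\Omega$ and let $\PPP_1(\Omega)$ denote the set of probability measures $\mu$ on $\Omega$ with $\int_\Omega d(o,x)\,\mu(\dx)<\infty$; equipped with $W_1$ this is a complete metric space because $(\Omega,d)$ is Polish (a standard fact for complete separable metric spaces). The whole argument takes place on $\PPP_1(\Omega)$, so I will first check that the dynamics stays in this space, then that it contracts, and finally deduce invariance and uniqueness.

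First I would verify that $P$, and hence $P^k$, maps $\PPP_1(\Omega)$ into itself. The key observation is that for any $x$ the only coupling of $\delta_o$ with $P_x$ is the product, so $\int_\Omega d(o,y)\,P_x(\dy)=W_1(\delta_o,P_x)\le d(o,x)+W_1(\delta_x,P_x)$. The triangle inequality together with the one-step bound $W_1(P_o,P_x)\le(1-\kappa)d(o,x)$ coming from $\kappa>-\infty$ gives $W_1(\delta_x,P_x)\le d(o,x)+W_1(\delta_o,P_o)+(1-\kappa)d(o,x)$, which is linear in $d(o,x)$ since $W_1(\delta_o,P_o)<\infty$ by Assumption \ref{assumption1}. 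Consequently $\int_\Omega d(o,y)\,P_x(\dy)\le C_1+C_2\,d(o,x)$ for finite constants $C_1,C_2$, and integrating against $\mu\in\PPP_1(\Omega)$ shows $\mu P\in\PPP_1(\Omega)$. This is the step where both hypotheses, $\kappa>-\infty$ and the finite-first-moment Assumption \ref{assumption1}, are genuinely used, and I expect it to be the main technical obstacle.

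Next I would extend the pointwise curvature bound to measures: for all $\mu,\nu\in\PPP_1(\Omega)$,
\begin{equation*}
W_1(\mu P^k,\nu P^k)\le (1-\kappa_k)\,W_1(\mu,\nu),
\end{equation*}
which follows by gluing an optimal coupling of $\mu,\nu$ with the couplings realising $W_1(P^k_x,P^k_y)\le(1-\kappa_k)d(x,y)$ (the measure-level form of coarse Ricci curvature, as in \cite{Ollivier2}). Since $\kappa_k>0$, the factor $1-\kappa_k<1$, so $\mu\mapsto\mu P^k$ is a strict contraction of the complete space $\PPP_1(\Omega)$, and the Banach fixed point theorem yields a unique $\pi\in\PPP_1(\Omega)$ with $\pi P^k=\pi$.

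Finally I would promote $\pi$ to a $P$-invariant measure and conclude uniqueness. Since $\pi P\in\PPP_1(\Omega)$ by the second step and $(\pi P)P^k=(\pi P^k)P=\pi P$, the measure $\pi P$ is also a fixed point of $P^k$, whence $\pi P=\pi$ by uniqueness of the fixed point. For uniqueness, any $P$-invariant $\mu\in\PPP_1(\Omega)$ is automatically $P^k$-invariant and so equals $\pi$; equivalently, $W_1(\mu,\pi)=W_1(\mu P^k,\pi P^k)\le(1-\kappa_k)W_1(\mu,\pi)$ forces $W_1(\mu,\pi)=0$. The one point deserving care is that this last inequality only constrains measures with $W_1(\mu,\pi)<\infty$, that is, those in $\PPP_1(\Omega)$; uniqueness is therefore established within $\PPP_1(\Omega)$, which is the natural class for this result and mirrors Corollary 21 of \cite{Ollivier2}.
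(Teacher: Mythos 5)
Your proof is correct, and while it rests on the same engine as the paper's --- the measure-level contraction $W_1(\mu P^k,\nu P^k)\le(1-\kappa_k)W_1(\mu,\nu)$ on the complete metric space of finite-first-moment probability measures --- the execution differs in two genuine ways. First, where the paper runs the iteration $\mu,\mu P,\mu P^2,\dots$ explicitly, shows it is Cauchy, and then proves stationarity of the limit $\pi$ by an analytic limiting argument ($W_1(\pi,\mtx{P}\pi)\le W_1(\mtx{P}^k\mu,\pi)+(1-\kappa)W_1(\mtx{P}^{k-1}\mu,\pi)\to 0$, which is where the paper spends its hypothesis $\kappa>-\infty$), you invoke the Banach fixed point theorem for the $k$-step map and then promote $P^k$-invariance to $P$-invariance purely algebraically: $\pi P$ is again a fixed point of $P^k$, hence equals $\pi$ by uniqueness. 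This is cleaner and avoids the limit computation entirely. Second, you use $\kappa>-\infty$ for a different purpose: to verify that $P$ maps $\PPP(\Omega)$ into itself, via the bound $\int d(o,y)P_x(\mathrm{d}y)\le C_1+C_2\,d(o,x)$ combining Assumption \ref{assumption1} with the one-step curvature lower bound. The paper never checks this stability explicitly, even though its Cauchy argument implicitly needs it (e.g.\ to know $W_1(\mu,\mtx{P}\mu)<\infty$), so your proof fills a small gap in the paper's exposition. One shared caveat, which you correctly flag: both arguments establish uniqueness only among invariant measures with finite first moment, which is also all the paper's own proof delivers.
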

\begin{proof}
Let $\PPP(\Omega)$ be the space of all probability measures $\mu$ on $\Omega$ with finite first moments (that is, for some $o\in \Omega$, $\int d(o,x) \mathrm{d}\mu(x)<\infty$). 
On this space, the transportation distance $W_1$ between any two measures is finite, so it is a proper distance, and one can show that $\PPP(\Omega)$ is a Polish space.
Using Proposition 20 of \cite{Ollivier2}, we have that for any two distributions $\mu$, $\mu'$ in $\PPP(\Omega)$, any $i\in \NN$,
\[W_1\left(\mtx{P}^i \mu, \mtx{P}^i\mu'\right)\le (1-\kappa_i)W_1(\mu,\mu').\]
Using \eqref{kappakleq2}, this implies, in particular, that for any $k\in \ZZ_{+}$, $i\in \N$,
\begin{equation}\label{eqW1Pmumup}W_1\left(\mtx{P}^i \mu, \mtx{P}^i\mu'\right)\le (1-\kappa_k)^{\left\lfloor\frac{i}{k}\right \rfloor}\cdot W_1(\mu,\mu').\end{equation}
Using our assumptions and Assumption \ref{assumption1}, this implies that $\mtx{P}^i \mu$ is a Cauchy sequence, and thus using the completeness of $\PPP(\Omega)$, it has a limit, which we call $\pi$. \eqref{eqW1Pmumup} also implies that this limit is the same for any $\mu\in \PPP(\Omega)$. Finally, the stationary follows from the fact that for any $\mu\in \PPP(\Omega)$, \[W_1(\pi, \mtx{P}\pi)\le W_1(\mtx{P}^k \mu, \pi)+W_1(\mtx{P}^k\mu, \mtx{P}\pi)\le W_1(\mtx{P}^k \mu, \pi)+(1-\kappa)W_1(\mtx{P}^{k-1}\mu, \pi), \]
and the right hand side tends to 0 as $k\to \infty$.
\end{proof}

\subsection{Mixing time and spectral gap}
We define the total variational distance of two measures $P, Q$ defined on the same state space $(\Omega,\F)$ as
\begin{equation}\label{dtvdef1}\dtv(P,Q):=\sup_{A\in \F} |P(A)-Q(A)|,\end{equation}\label{dtvdef}
which is equivalent to 
\begin{equation}\label{dtvdef2}\dtv(P,Q):=\inf_{(X,Y)} \PP(X\ne Y),\end{equation}
with the infimum taken over all the couplings $(X,Y)$ of $P$ and $Q$.

We define the mixing time of a time homogeneous Markov chain with general state space in the following way (similarly to Section 4.5 and 4.6 of \cite{peresbook}).
\begin{definition}[Mixing time]\label{mixhom}
Let $X_1,X_2,X_3,\ldots$ be a time homogeneous Markov chain with transition kernel $P(x,\dy)$, state space $\Omega$ (a Polish space), and stationary distribution $\pi$. Let us denote
\[d(t):=\sup_{x\in \Omega} \dtv\left(P^t_x,\pi \right),\hspace{2mm}
\tmix(\epsilon):=\min\{t: d(t)\le \epsilon\},\text{ and }
\hspace{2mm}\tmix:=\tmix(1/4).\]
\end{definition}
Let $L_2(\pi)$ denote the Hilbert space of complex valued measurable functions with domain $\Omega$ that are square integrable with respect to $\pi$, endowed with the inner product $<f,g>_{\pi}=\int f g^* \mathrm{d} \pi$, and norm $\|f\|_{2,\pi}:=\left<f,f\right>_{\pi}^{1/2}=
\E_{\pi}\left(f^2\right)^{1/2}$ (we use the same notation for the induced operator norm). $P$ can be then viewed as a linear operator on $L_2(\pi)$, denoted by $\mtx{P}$, defined as \[(\mtx{P} f)(x):=\E_{P_x}(f),\] and reversibility is equivalent to the self-adjointness of $\mtx{P}$.
The operator $\mtx{P}$ acts on measures to the left, creating a measure $\mu\mtx{P}$, that is, for every measurable subset $A$ of $\Omega$, $\mu \mtx{P}(A):=\int_{x\in \Omega} P(x,A) \mu(\mathrm{d} x)$. For a Markov chain with transition kernel $P(x,dy)$, and stationary distibution $\pi$, we define the \emph{time reversal} of $P$ as the Markov kernel
\begin{equation}\label{Pstardef}
P^*(x, dy):=\frac{P(y,dx)}{\pi(dx)} \cdot \pi(dy).
\end{equation}
Then the linear operator $\mtx{P}^*$ is the adjoint of the linear operator $\mtx{P}$ on $L_2(\pi)$.
For a Markov chain with stationary distribution $\pi$, we define the \emph{spectrum} of the chain as
\[S_2:=\{\lambda\in \C\setminus 0: (\lambda\mathbf{I}-\mtx{P})^{-1}\text{ does not exist as a
bounded lin. oper. on } L_2(\pi)\}.\]
For reversible chains, $S_2$ lies on the real line.
\begin{definition}[Spectral gap and pseudo spectral gap]
The \emph{spectral gap} for reversible chains is
\begin{align*}
\gamma&:=1-\sup\{\lambda: \lambda\in S_2, \lambda\ne 1\}  \quad \text{if eigenvalue 1 has multiplicity 1,}\\
\gamma&:=0 \quad\text{otherwise}.
\end{align*}

For both reversible, and non-reversible chains, the \emph{absolute spectral gap} is
\begin{align*}
\gamma^*&:=1-\sup\{|\lambda|: \lambda\in S_2, \lambda\ne 1\}  \quad \text{if eigenvalue 1 has multiplicity 1,}\\
\gamma^*&:=0 \quad \text{otherwise}.
\end{align*}
In the reversible case, $\gamma\ge \gamma^*$. 

The \emph{pseudo spectral gap} of $\mtx{P}$ (introduced in \cite{Martoncoupling}) is 
\begin{equation}\label{gammapsdef}
\gammaps:=\max_{k\ge 1} \left\{\gamma((\mtx{P}^*)^k \mtx{P}^k)/k\right\}, 
\end{equation}
where $\gamma((\mtx{P}^*)^k \mtx{P}^k)$ denotes the spectral gap of the self-adjoint operator $(\mtx{P}^*)^k \mtx{P}^k$.
\end{definition}
\begin{remark}
The pseudo spectral gap is similar to the spectral gap in the sense that it allows to obtain variance and concentration bounds on MCMC empirical averages, for example $\Var_{\pi}((f(X_1)+\ldots f(X_N))/N)\le 4\Var_{\pi}(f)/(N\gammaps)$ (see \cite{Martoncoupling}, Section 3). Moreover, it is related to the mixing time, $\gammaps\le 1/(2\tmix)$, and for chains on finite state spaces, $\tmix\le (1+2\log(2) + \log(1/\pi_{\min}))/\gammaps$ (here $\pi_{\min}:=\min_{x\in \Omega}\pi(x)$).
\end{remark}

\section{Results}\label{SecResults}
In this section, we will present our results based on the multi-step coarse Ricci curvature. In Section \ref{SecBoundingmultistep}, we present a recursive lower bound for $\kappa_k$. Section \ref{SecSpectralbounds} states spectral bounds, explains the relation of the multi-step coarse Ricci curvature and spectral properties of the Markov chain, while Section \ref{Secdiameterbounds} states bounds on the diameter of the state space. In Section \ref{Secconcentrationbounds}, we state variance, moment, and concentration bounds for Lipschitz functions of random variables distributed according to the stationary distribution of a Markov chain. Finally, in Section \ref{SecMCMC}, we state some error bounds for MCMC empirical averages.
\subsection{Bounding the multi-step coarse Ricci curvature}\label{SecBoundingmultistep}
Our first proposition, the so called geodesic property is useful to get bounds on $\kappa_k$ (similarly as in Proposition 19 of \cite{Ollivier2}).
\begin{proposition}\label{geodesicprop}
Suppose that $(\Omega,d)$ is $\epsilon$-geodesic in the sense that for any two points $x,y\in \Omega$, there exists an integer $n$, and a sequence $x_0=x$,$x_1,\ldots,x_n=y$ such that $d(x,y)=\sum_{i=0}^{n-1} d(x_i,x_{i+1})$ and $d(x_i,x_{i+1})\le \epsilon$ for $0\le i\le n-1$.
Let $k\ge 1$, then if $\kappa_k(x,y)\ge \kappa_k$ for any pair of points $x,y$ with $d(x,y)\le \epsilon$, then $\kappa_k(x,y)\ge \kappa_k$ for any pair of points $x,y\in \Omega$.
\end{proposition}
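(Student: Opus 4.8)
The plan is to translate the curvature inequality into a Wasserstein contraction statement and then chain it along an $\epsilon$-geodesic path by means of the triangle inequality for $W_1$; this mirrors the argument for Proposition 19 of \cite{Ollivier2}. First I would record that, by Definition \ref{defmultistepRicci}, the hypothesis ``$\kappa_k(x,y)\ge \kappa_k$ whenever $d(x,y)\le \epsilon$'' is exactly the contraction bound
\[
W_1(P^k_x,P^k_y)\le (1-\kappa_k)\,d(x,y)
\]
holding for every pair with $d(x,y)\le \epsilon$. Likewise, the desired conclusion $\kappa_k(x,y)\ge \kappa_k$ for arbitrary $x,y$ is nothing but the same contraction bound for all pairs. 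So the whole task reduces to upgrading this inequality from the local scale $d(x,y)\le\epsilon$ to all of $\Omega\times\Omega$.

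Next I would fix an arbitrary pair $x\ne y$ and invoke the $\epsilon$-geodesic assumption to obtain a finite sequence $x_0=x, x_1,\ldots,x_n=y$ with $d(x,y)=\sum_{i=0}^{n-1} d(x_i,x_{i+1})$ and $d(x_i,x_{i+1})\le \epsilon$ for each $i$. The key tool is that $W_1$ obeys the triangle inequality (it is a metric on the space $\PPP(\Omega)$ of probability measures with finite first moment, as recalled in the proof of Proposition \ref{propstatdist}), which gives $W_1(P^k_x,P^k_y)\le \sum_{i=0}^{n-1} W_1(P^k_{x_i},P^k_{x_{i+1}})$. Since each consecutive pair satisfies $d(x_i,x_{i+1})\le \epsilon$, the local hypothesis applies termwise, so $W_1(P^k_{x_i},P^k_{x_{i+1}})\le (1-\kappa_k)\,d(x_i,x_{i+1})$. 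Summing these bounds and using the additivity $\sum_{i=0}^{n-1} d(x_i,x_{i+1})=d(x,y)$ furnished by the geodesic property yields $W_1(P^k_x,P^k_y)\le (1-\kappa_k)\,d(x,y)$, which is precisely the claim.

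The only point requiring genuine care — and thus the sole obstacle — is the legitimacy of the triangle-inequality chaining, i.e.\ ensuring that the Wasserstein distances $W_1(P^k_{x_i},P^k_{x_{i+1}})$ are well behaved. This is benign: the local hypothesis already bounds each of these distances by $(1-\kappa_k)\,d(x_i,x_{i+1})<\infty$, and the subadditivity of $W_1$ is valid in $[0,\infty]$, so the sum is finite and the intermediate measures $P^k_{x_i}$ need not be controlled individually. (Should one prefer to work inside $\PPP(\Omega)$, Assumption \ref{assumption1} guarantees that $P_x$ has finite first moment, and iterating keeps the $k$-step measures in $\PPP(\Omega)$.) Because $n$ is finite for each fixed pair, no limiting argument is needed, and the proof is complete.
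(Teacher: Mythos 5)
Your proof is correct and is essentially the paper's argument: the paper simply invokes Proposition 19 of \cite{Ollivier2} applied to the kernel $P^k$, and your chaining of the local contraction bound $W_1(P^k_{x_i},P^k_{x_{i+1}})\le (1-\kappa_k)d(x_i,x_{i+1})$ along the $\epsilon$-geodesic via the triangle inequality for $W_1$ is precisely the proof of that cited proposition, transplanted to $P^k$.
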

\begin{proof}
Apply Proposition 19 of \cite{Ollivier2} to the Markov kernel $P^k$.
\end{proof}

The following proposition gives a recursive lower bound on the multi-step Ricci curvature $\kappa_k(x,y)$.
\begin{proposition}\label{kstepricciestimateprop}
For some $x,y\in \Omega, x\ne y$, let $(X,Y)$ be a coupling of $P_x$ and $P_y$, then
\[\kappa_{k+1}(x,y)\ge 1-\E\left(\frac{d(X,Y) (1-\kappa_k(X,Y))}{d(x,y)}\right).\]
If $(X,Y)$ satisfies that $\E(d(X,Y))=W_1(P_x,P_y)$ (that is, the coupling ``achieves" the Wasserstein distance), then 
\[\kappa_{k+1}(x,y)\ge \kappa(x,y)+\E\left(\frac{\kappa_{k}(X,Y) d(X,Y)}{d(x,y)}\right).\]
\end{proposition}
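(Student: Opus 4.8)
The plan is to exploit the Markov decomposition $P^{k+1}_x=\int P^k_z\,P_x(\mathrm{d}z)$ and to glue the given first-step coupling with optimal couplings of the remaining $k$ steps. Concretely, I would start from the coupling $(X,Y)$ of $P_x$ and $P_y$, and then, conditionally on the value $(X,Y)=(u,v)$, run an optimal coupling of $P^k_u$ and $P^k_v$, producing a pair $(X',Y')$. By construction $X'\sim P^{k+1}_x$ and $Y'\sim P^{k+1}_y$, so $(X',Y')$ is a valid coupling of the two $(k+1)$-step kernels and therefore bounds $W_1(P^{k+1}_x,P^{k+1}_y)$ from above.

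For the first inequality I would compute
\[W_1(P^{k+1}_x,P^{k+1}_y)\le \E\big(d(X',Y')\big)=\E\big(\E(d(X',Y')\mid X,Y)\big)=\E\big(W_1(P^k_X,P^k_Y)\big),\]
where the last equality uses that the inner coupling achieves the Wasserstein distance. Since $W_1(P^k_u,P^k_v)=d(u,v)(1-\kappa_k(u,v))$ holds both when $u\ne v$ (by definition) and when $u=v$ (both sides vanish, as $\kappa_k(u,u)=1$), I can substitute to obtain $W_1(P^{k+1}_x,P^{k+1}_y)\le \E\big(d(X,Y)(1-\kappa_k(X,Y))\big)$. Dividing by $d(x,y)$ and invoking the definition of $\kappa_{k+1}(x,y)$ yields the first claim.

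The second inequality is then a short algebraic rearrangement. Expanding the factor $1-\kappa_k$ gives
\[\kappa_{k+1}(x,y)\ge 1-\E\!\left(\frac{d(X,Y)}{d(x,y)}\right)+\E\!\left(\frac{d(X,Y)\kappa_k(X,Y)}{d(x,y)}\right),\]
and under the additional hypothesis $\E(d(X,Y))=W_1(P_x,P_y)$ the first expectation equals $1-\kappa(x,y)$, because $W_1(P_x,P_y)/d(x,y)=1-\kappa_1(x,y)=1-\kappa(x,y)$. Substituting produces exactly $\kappa_{k+1}(x,y)\ge \kappa(x,y)+\E(\kappa_k(X,Y)d(X,Y)/d(x,y))$.

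The main obstacle I anticipate is purely technical: guaranteeing that the conditional optimal couplings can be chosen measurably in $(u,v)$, so that the gluing defines a genuine coupling on $\Omega\times\Omega$ whose cost equals $\E(W_1(P^k_X,P^k_Y))$. In the Polish setting this follows from a measurable selection argument (the gluing lemma for Wasserstein couplings). Alternatively, one sidesteps it entirely by taking, for each $(u,v)$, a coupling within $\epsilon$ of optimal, deriving the bound with an extra additive error term controlled by $\epsilon$, and then letting $\epsilon\to 0$.
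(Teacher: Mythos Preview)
Your proposal is correct and follows essentially the same approach as the paper: both construct a coupling of $P^{k+1}_x$ and $P^{k+1}_y$ by gluing the given one-step coupling $(X,Y)$ with conditionally optimal $k$-step couplings, then compute the expected distance via the tower property and rewrite using the definitions of $\kappa_k(X,Y)$ and $\kappa(x,y)$. If anything, your version is slightly more careful, since you explicitly handle the diagonal case $u=v$ and flag the measurable-selection issue (with the standard $\epsilon$-approximation workaround), which the paper's proof leaves implicit.
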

\begin{proof}
We are going to construct a coupling $X_{k+1}\sim P_x^{k+1}, Y_{k+1}\sim P_y^{k+1}$ as follows. 
We start from our coupling $(X,Y)$ of $P_x$ and $P_y$, 
and for any $a,b\in \Omega$, define \[\mathcal{L}(X_{k+1},Y_{k+1}|X=a,Y=b)\]
as the optimal coupling between $P^k_a$, $P^k_b$, achieving $\E(d(X_{k+1},Y_{k+1})|X=a,Y=b)=W_1(P^k_a, P^k_b)$. Then we have
\begin{align*}W_1(P^{k+1}_x, P^{k+1}_y)&\le \E(d(X_{k+1},Y_{k+1}))=\E(\E(d(X_{k+1},Y_{k+1})|X,Y))\\
&= \E((1-\kappa_k(X,Y))d(X,Y)),\end{align*}
and thus
\begin{align*}
\kappa_{k+1}(x,y)&=1-\frac{W_1(P^{k+1}_x,P^{k+1}_y)}{d(x,y)}\ge 1-\frac{\E((1-\kappa_k(X,Y))d(X,Y))}{d(x,y)}\\
&=1-\frac{\E(d(X,Y))}{d(x,y)}+\E\left(\frac{\kappa_{k}(X,Y) d(X,Y)}{d(x,y)}\right).
\end{align*}
Finally, if $(X,Y)$ is the optimal coupling between $P_x$, and $P_y$, then
$\E(d(X,Y))=(1-\kappa(x,y))d(x,y)$, and the second claim of the proposition follows.
\end{proof}
Suppose that everywhere except in a small part of the state space $\Omega$, $\kappa(x,y)>0$ for neighbouring $x$ and $y$. Then this result says that $\kappa_{k+1}(x,y)$ can be lower bounded by some sort of average of $\kappa_{k}(x,y)$, and for sufficiently large $k$, the negative curvature may disappear. In Section \ref{SecBinaryCube}, we are going to apply this result to a random walk on the binary cube with a forbidden region. 

\subsection{Spectral bounds}\label{SecSpectralbounds}
Our first result is a bound on the mixing time.
\begin{proposition}[Relation of mixing time and coarse Ricci curvature]\label{mixingRicciprop}
Let $(\Omega, d)$ be a metric space, and $P(x,\dy)$ a Markov kernel, as previously. Suppose that $\mathrm{diam\,}(\Omega)<\infty$, and there is $d_0>0$ such that for any $x\ne y$, $d(x,y)\ge d_0$. Then
\begin{equation}\label{tmixkappabound}\tmix(\epsilon)\le \inf \{ k: k\in \N, 1-\kappa_k\le \epsilon d_0/\diam(\Omega) \}.\end{equation}
Conversely, we have, for any $\epsilon>0$, $k\ge \tmix(\epsilon/2)$,
\begin{equation}\label{kappatmixbound}\kappa_k\ge 1-\epsilon \cdot \diam(\Omega)/d_0.
\end{equation}
\end{proposition}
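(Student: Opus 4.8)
The plan is to relate the total variation mixing behaviour of the $k$-step kernel directly to the Wasserstein contraction encoded by $\kappa_k$, exploiting the two-sided comparison between $\dtv$ and $W_1$ that the hypotheses $\diam(\Omega)<\infty$ and $d(x,y)\ge d_0$ make available. The key observation is that under these assumptions the metric $d$ and the discrete $0/1$ metric $\mathbbm{1}[x\ne y]$ are equivalent up to the factor $\diam(\Omega)/d_0$: for any two points, $d_0\,\mathbbm{1}[x\ne y]\le d(x,y)\le \diam(\Omega)\,\mathbbm{1}[x\ne y]$. Since $\dtv(\mu,\nu)=\inf_{(X,Y)}\PP(X\ne Y)$ is exactly $W_1$ computed with the discrete metric, this equivalence transfers bounds between the Wasserstein distance under $d$ and the total variation distance.

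For the upper bound \eqref{tmixkappabound}, first I would fix $x\in\Omega$ and bound $\dtv(P_x^k,\pi)$ by a Wasserstein distance. Using the discrete-metric representation of $\dtv$ together with the lower bound $d\ge d_0\,\mathbbm{1}[\cdot\ne\cdot]$, one gets $\dtv(P_x^k,\pi)\le W_1(P_x^k,\pi)/d_0$. Next I would control $W_1(P_x^k,\pi)$ by contraction: since $\pi$ is stationary, $\pi=\mtx{P}^k\pi$, so by Proposition 20 of \cite{Ollivier2} (as invoked in the proof of Proposition \ref{propstatdist}) we have $W_1(P_x^k,\pi)=W_1(\mtx{P}^k\delta_x,\mtx{P}^k\pi)\le(1-\kappa_k)\,W_1(\delta_x,\pi)$. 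Finally $W_1(\delta_x,\pi)\le\diam(\Omega)$ because every pairwise distance is at most $\diam(\Omega)$. Chaining these gives $d(k)=\sup_x\dtv(P_x^k,\pi)\le(1-\kappa_k)\,\diam(\Omega)/d_0$, and so whenever $1-\kappa_k\le\epsilon\,d_0/\diam(\Omega)$ we have $d(k)\le\epsilon$, which is precisely the claimed bound on $\tmix(\epsilon)$ once one takes the infimum over such $k$.

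For the converse \eqref{kappatmixbound}, I would run the comparison in the opposite direction, bounding $W_1$ by $\dtv$ via the upper bound $d\le\diam(\Omega)\,\mathbbm{1}[\cdot\ne\cdot]$, which yields $W_1(\mu,\nu)\le\diam(\Omega)\,\dtv(\mu,\nu)$ for any measures. Then for any $x,y$, the triangle inequality in $W_1$ gives $W_1(P_x^k,P_y^k)\le W_1(P_x^k,\pi)+W_1(\pi,P_y^k)\le\diam(\Omega)\big(\dtv(P_x^k,\pi)+\dtv(P_y^k,\pi)\big)$. For $k\ge\tmix(\epsilon/2)$ each total variation term is at most $\epsilon/2$, so $W_1(P_x^k,P_y^k)\le\epsilon\,\diam(\Omega)$. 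Dividing by $d(x,y)\ge d_0$ and taking the infimum over $x\ne y$ gives $\kappa_k\ge 1-\epsilon\,\diam(\Omega)/d_0$.

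I do not expect a serious obstacle here; the argument is essentially a clean two-sided translation between the two metrics. The one point requiring a little care is the direction of each inequality between $d$ and the discrete metric: the lower bound $d\ge d_0\,\mathbbm{1}$ is what lets $\dtv$ dominate (a scaled) $W_1$ for the mixing-time half, while the upper bound $d\le\diam(\Omega)\,\mathbbm{1}$ is what lets $W_1$ be dominated by a scaled $\dtv$ for the curvature half; using them in the wrong places would reverse the inequalities. A second minor subtlety is ensuring the relevant measures lie in $\PPP(\Omega)$ so that the $W_1$ contraction from Proposition 20 of \cite{Ollivier2} applies, but this is automatic since $\diam(\Omega)<\infty$ forces all first moments to be finite.
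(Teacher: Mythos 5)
Your proof is correct. For the first inequality \eqref{tmixkappabound} your route is essentially the paper's: both rest on the metric comparison $\dtv\le W_1/d_0$ followed by a curvature contraction estimate. The only difference is in how the stationary measure enters: the paper bounds $\dtv(P_x^k,P_y^k)\le \diam(\Omega)(1-\kappa_k)/d_0$ pointwise and then averages over $y\sim\pi$ (using stationarity), whereas you apply the $W_1$-contraction for general measures (Proposition 20 of \cite{Ollivier2}) to the pair $\delta_x,\pi$ and then use $W_1(\delta_x,\pi)\le\diam(\Omega)$; the two are interchangeable and give the same bound. For the converse \eqref{kappatmixbound} the comparison is more interesting, because the paper gives no proof at all — it is left to the reader, with a hint pointing to Proposition \ref{geodesicprop}. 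Your argument — the triangle inequality $W_1(P_x^k,P_y^k)\le W_1(P_x^k,\pi)+W_1(\pi,P_y^k)$, the reverse comparison $W_1\le\diam(\Omega)\,\dtv$, the definition of $\tmix(\epsilon/2)$, and division by $d(x,y)\ge d_0$ — is a complete and correct filling of that gap. Notably, your route needs no geodesic property at all (invoking Proposition \ref{geodesicprop} would in any case require an $\epsilon$-geodesic hypothesis that is absent from the statement of Proposition \ref{mixingRicciprop}), so your direct argument is arguably cleaner than the one the paper's hint suggests.
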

\begin{remark}
If $\kappa>0$, then $1-\kappa_k\le (1-\kappa)^k$, thus
\[\tmix(\epsilon)\le \left\lceil \frac{\log(\epsilon d_0/\mathrm{diam\,}(\Omega))}{\log(1-\kappa)}\right\rceil,\]
which is the well known Bubley-Dyer path coupling bound. Our bound, however, does not require $\kappa>0$, thus it is more general.
\end{remark}
\begin{proof}[Proof of Proposition \ref{mixingRicciprop}]
For two disjoint $x,y \in \Omega$, $k\ge 1$, we have
\[\dtv(P_x^k,P_y^k)\le \frac{W_1(P_x^k,P_y^k)}{d_0}\le \frac{\diam(\Omega)(1-\kappa_k)}{d_0}.\]
Averaging out in $y$ gives
\[\dtv(P_x^k,\pi)\le \sup_{y\in \Omega}\frac{W_1(P_x^k,P_y^k)}{d_0}\le \frac{\diam(\Omega)(1-\kappa_k)}{d_0},\]
and this is less than equal to $\epsilon$ if $1-\kappa_k\le \epsilon d_0/\diam(\Omega)$.
The proof of \eqref{kappatmixbound}, based on Proposition \ref{geodesicprop}, is left to the reader as exercise.
\end{proof}
Now we give lower bounds on the spectral gap and the pseudo spectral gap.
\begin{proposition}[Relation of spectral gap and coarse Ricci curvature]\label{kappagammaprop}
For reversible chains, for every $k\ge 1$,
\begin{equation}\label{gammastareq1}
\gamma^*\ge 1- (1-\kappa_k)^{1/k}\ge \frac{\kappa_k}{k}.
\end{equation}
Without assuming reversibility, for every $k\ge 1$,
\begin{equation}\label{gammapseq1}
\gammaps\ge \frac{1-(1-\kappa_k(P^*))(1-\kappa_k)}{k},
\end{equation}
with $\kappa_k(P^*)$ denoting the $k$th step coarse Ricci curvature of the time reversal of our Markov kernel, $P^*(x,\dy)$.
\end{proposition}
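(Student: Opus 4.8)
The plan is to treat the two displays separately, reducing each to the single-step reversible bound (the coarse Ricci curvature lower bounds the absolute spectral gap) applied to a suitable power or product of $\mtx{P}$. For \eqref{gammastareq1} I would first dispose of the rightmost inequality, which is purely elementary: writing $t:=1-\kappa_k\ge 0$, the bound $1-t^{1/k}\ge (1-t)/k$ is exactly the weighted AM--GM inequality $t^{1/k}=t^{1/k}\cdot 1^{(k-1)/k}\le \tfrac1k t+\tfrac{k-1}{k}$ (equivalently, concavity of $s\mapsto s^{1/k}$), which rearranges to $1-t^{1/k}\ge (1-t)/k=\kappa_k/k$. For the substantive inequality $\gamma^*\ge 1-(1-\kappa_k)^{1/k}$, I would observe that $P^k$ is itself a reversible kernel with the same stationary distribution $\pi$ and single-step coarse Ricci curvature equal to $\kappa_k$. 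The mechanism is contraction of the Lipschitz seminorm: from $|\mtx{P}^k f(x)-\mtx{P}^k f(y)|\le \Lip(f)\,W_1(P^k_x,P^k_y)\le (1-\kappa_k)\Lip(f)\,d(x,y)$ (Kantorovich duality) one gets $\Lip(\mtx{P}^k f)\le (1-\kappa_k)\Lip(f)$, so any eigenfunction $f$ of $\mtx{P}$ with eigenvalue $\lambda\ne 1$ satisfies $|\lambda|^k\,\Lip(f)=\Lip(\mtx{P}^k f)\le (1-\kappa_k)\Lip(f)$, whence $|\lambda|\le (1-\kappa_k)^{1/k}$; taking the supremum over $\lambda\ne 1$ gives $\gamma^*\ge 1-(1-\kappa_k)^{1/k}$. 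Equivalently, one applies the known reversible estimate $\gamma^*\ge\kappa$ to the kernel $P^k$, obtaining $\gamma^*(\mtx{P}^k)\ge\kappa_k$, and uses the spectral mapping $1-\gamma^*(\mtx{P}^k)=(1-\gamma^*(\mtx{P}))^k$ for the self-adjoint $\mtx{P}$.

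I expect the technical bookkeeping in the reversible case to be the main obstacle, for two reasons. First, in a general Polish space eigenfunctions need not be Lipschitz, so the seminorm argument must be justified by an approximation/spectral-measure argument, or simply by quoting the single-step reversible result as applied to $P^k$. Second, the spectral mapping has edge cases when $\lambda\ne 1$ but $\lambda^k=1$ (e.g.\ $\lambda=-1$ with $k$ even). These are harmless: if $\kappa_k\le 0$ the claimed right-hand side is nonpositive and the bound holds trivially, while if $\kappa_k>0$ the estimate $\gamma^*(\mtx{P}^k)\ge\kappa_k>0$ forces $1$ to be a simple eigenvalue of $\mtx{P}^k$ with the remaining spectrum of modulus at most $1-\kappa_k<1$, ruling out any $\lambda\ne 1$ with $\lambda^k=1$ and placing us in the clean case where $\bigl(\sup_{\lambda\ne 1}|\lambda|\bigr)^k\le 1-\kappa_k$.

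For the pseudo spectral gap \eqref{gammapseq1}, I would use the definition $\gammaps=\max_{j\ge 1}\gamma((\mtx{P}^*)^j\mtx{P}^j)/j$, so that it suffices to bound the single term $j=k$. The operator $(\mtx{P}^*)^k\mtx{P}^k$ is self-adjoint and positive, and corresponds to the reversible Markov kernel $(P^*)^kP^k$ (run $k$ steps of the time reversal, then $k$ steps of the chain) with stationary distribution $\pi$. Its single-step coarse Ricci curvature is bounded by composing two $W_1$-contractions: for any $x\ne y$, writing $\mu:=(P^*)^k_x$ and $\mu':=(P^*)^k_y$, the definition of $\kappa_k(P^*)$ gives $W_1(\mu,\mu')\le(1-\kappa_k(P^*))d(x,y)$, while Proposition 20 of \cite{Ollivier2} applied to $P^k$ gives the further contraction by $(1-\kappa_k)$, so that
\[
W_1\!\big(((P^*)^kP^k)_x,\,((P^*)^kP^k)_y\big)
= W_1(\mu\,\mtx{P}^k,\,\mu'\,\mtx{P}^k)
\le (1-\kappa_k)\,W_1(\mu,\mu')
\le (1-\kappa_k)\big(1-\kappa_k(P^*)\big)\,d(x,y).
\]
Hence the curvature of $(P^*)^kP^k$ is at least $1-(1-\kappa_k)(1-\kappa_k(P^*))$. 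Applying the reversible estimate $\gamma\ge\kappa$ (the $k=1$ instance of the first part, together with $\gamma=\gamma^*$ for the positive operator $(\mtx{P}^*)^k\mtx{P}^k$) yields $\gamma((\mtx{P}^*)^k\mtx{P}^k)\ge 1-(1-\kappa_k)(1-\kappa_k(P^*))$, and dividing by $k$ and taking the maximum in the definition of $\gammaps$ gives \eqref{gammapseq1}. Here the only points needing care are the kernel/operator identification and the correct left-action bookkeeping in the composition of contractions; the remainder is a direct combination of the contraction lemma and the reversible bound established above.
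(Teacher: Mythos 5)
Your proposal is correct and follows essentially the same route as the paper: apply Ollivier's single-step reversible bound (Proposition 30 of \cite{Ollivier2}) to the kernel $P^k$ together with the spectral mapping identity $1-\gamma^*(\mtx{P}^k)=(1-\gamma^*)^k$ for \eqref{gammastareq1}, and to the reversible kernel $(P^*)^kP^k$ with the contraction-composition bound $1-\kappa((P^*)^kP^k)\le(1-\kappa_k(P^*))(1-\kappa_k)$ for \eqref{gammapseq1}. The details you add beyond the paper's terse proof — the concavity argument for $1-(1-\kappa_k)^{1/k}\ge\kappa_k/k$, the edge case $\lambda\ne1$, $\lambda^k=1$, and the explicit left-action bookkeeping — are all correct fillings-in, not deviations.
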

\begin{remark}
In Section \ref{secsplitmergewalk}, we are going to use this result to obtain a lower bound for the spectral gap of the split-merge walk on partitions. Another application is given in Section \ref{CurieWeissbound}, where we use this proposition to bound the pseudo spectral gap of the systemic scan Glauber dynamics in the high temperature regime. 
\end{remark}
\begin{proof}[Proof of Proposition \ref{kappagammaprop}]
For reversible chains, by applying Proposition 30 of \cite{Ollivier2} to $P^k$, we have
$1-\gamma^*(P^k)\le 1-\kappa_k$, and \eqref{gammastareq1} follows by the fact that
$1-\gamma^*(P^k)=(1-\gamma^*)^k$.
Similarly, applying Proposition 30 of \cite{Ollivier2} to the reversible kernel $(P^*)^kP^k$, we have
$1-\gamma^*((P^*)^kP^k)\le 1-\kappa((P^*)^kP^k)$. Using Definition \ref{defmultistepRicci}, one can show that $1-\kappa((P^*)^kP^k)\le (1-\kappa_k(P^*))(1-\kappa_k)$, thus \eqref{gammapseq1} follows.
\end{proof}

\subsection{Diameter bounds}\label{Secdiameterbounds}
Our first result in this section is an analogue of Proposition 23 of \cite{Ollivier2}.
\begin{proposition}[$L^1$ Bonnet-Myers theorem]\label{L1BonnetMyers}
For $k\ge 1$, let the \emph{$k$-step jump length} of the random walk at $x$ be
\[J_k(x):=\int_{y} d(x,y) \mathrm{d}P_{x}^k(y).\]
Suppose that for some $k\ge 1$, $\kappa_k(x,y)>0$ for every $x,y\in \Omega$. Then for every $x,y\in \Omega$, we have
\[d(x,y)\le \frac{J_k(x)+J_k(y)}{\kappa_k(x,y)},\]
and in particular,
\[\diam(\Omega)\le \frac{2\sup_x J_k(x)}{\kappa_k}\le  \frac{2k \sup_x J(x)}{\kappa_k}.\]
\end{proposition}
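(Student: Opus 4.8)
The plan is to derive the pointwise bound from a single application of the triangle inequality for the Wasserstein distance $W_1$, applied to the four measures $\delta_x$, $P_x^k$, $P_y^k$, and $\delta_y$. The crucial observation is that coupling a Dirac mass with any measure is forced: the only coupling of $\delta_x$ and $P_x^k$ transports all the mass sitting at $x$, so that $W_1(\delta_x, P_x^k) = \int_y d(x,y)\,\mathrm{d}P_x^k(y) = J_k(x)$, and likewise $W_1(\delta_y, P_y^k) = J_k(y)$. Since $W_1(\delta_x,\delta_y) = d(x,y)$ and, by Definition \ref{defmultistepRicci}, $W_1(P_x^k, P_y^k) = (1-\kappa_k(x,y))\,d(x,y)$, I would write
\[ d(x,y) = W_1(\delta_x,\delta_y) \le W_1(\delta_x, P_x^k) + W_1(P_x^k, P_y^k) + W_1(P_y^k, \delta_y) = J_k(x) + (1-\kappa_k(x,y))\,d(x,y) + J_k(y). \]

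Rearranging this inequality cancels the common term $(1-\kappa_k(x,y))\,d(x,y)$ on both sides and leaves $\kappa_k(x,y)\,d(x,y) \le J_k(x)+J_k(y)$; dividing by $\kappa_k(x,y)>0$ (which holds by hypothesis) yields the first claimed bound $d(x,y)\le (J_k(x)+J_k(y))/\kappa_k(x,y)$.

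For the diameter bound, I would pass to the supremum over all pairs $x,y$. Bounding the numerator by $J_k(x)+J_k(y)\le 2\sup_z J_k(z)$ and the denominator from below by $\kappa_k(x,y)\ge \kappa_k$ (recall $\kappa_k=\inf_{x,y}\kappa_k(x,y)$, which is positive here) gives $\diam(\Omega)\le 2\sup_z J_k(z)/\kappa_k$. The last inequality $\sup_z J_k(z)\le k\sup_z J(z)$ is a telescoping estimate: writing $X_0=x, X_1,\ldots, X_k$ for the chain started at $x$, the triangle inequality gives $d(x,X_k)\le \sum_{i=0}^{k-1} d(X_i,X_{i+1})$, and taking expectations together with the Markov property gives $\E(d(X_i,X_{i+1})) = \E(J(X_i))\le \sup_z J(z)$, so that $J_k(x)\le k\sup_z J(z)$.

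I do not anticipate a genuine obstacle; the argument is entirely a matter of the triangle inequality and the forced coupling with Dirac masses. The only point requiring minor care is the identity $W_1(\delta_x,P_x^k)=J_k(x)$ (there is a unique coupling of a Dirac with any measure, so the infimum defining $W_1$ is attained trivially), and ensuring that the telescoping bound for $J_k$ invokes the Markov property correctly when conditioning on $X_i$ so that the conditional law of $X_{i+1}$ is $P_{X_i}$.
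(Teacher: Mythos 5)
Your proof is correct and is essentially the paper's argument: the paper's one-line proof invokes Proposition 23 of \cite{Ollivier2} applied to the kernel $P^k$, and your chain $d(x,y)\le W_1(\delta_x,P_x^k)+W_1(P_x^k,P_y^k)+W_1(P_y^k,\delta_y)$ with the forced Dirac couplings is precisely the proof of that cited result, with the rearrangement using $W_1(P_x^k,P_y^k)=(1-\kappa_k(x,y))d(x,y)$ from Definition \ref{defmultistepRicci}. Your telescoping verification of $\sup_x J_k(x)\le k\sup_x J(x)$ via the Markov property additionally makes explicit the one step that the citation itself does not cover.
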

\begin{proof}
Apply Proposition 23 of \cite{Ollivier2} to $P^k$.
\end{proof}
\begin{remark}
In Section \ref{secsplitmergewalk}, we are going to apply this proposition to split-merge walk on partitions, and obtain a bound on diameter of $\Omega$ of $\mathcal{O}(N)$. 
\end{remark}
Similarly, we can generalise Proposition 24 of \cite{Ollivier2}.
\begin{proposition}[Average $L^1$ Bonnet-Myers theorem]\label{L1AverageBonnetMyers}
Suppose that for some $k\ge 1$, $\kappa_k(x,y)>0$ for all $x,y\in \Omega$. Then for any $x\in \Omega$, we have
\[\int d(x,y)\mathrm{d}\pi(y)\le \frac{J_k(x)}{\kappa_k},\]
and thus,
\[\int\int d(x,y)\mathrm{d}\pi(x)\mathrm{d}\pi(y) \le \frac{2\inf_x J_k(x)}{\kappa_k}.\]
\end{proposition}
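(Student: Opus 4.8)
The plan is to run the contraction argument directly for the $k$-step kernel $P^k$, using that $\pi$ is stationary for $P^k$ as well and that the coarse Ricci curvature of $P^k$ is exactly $\kappa_k$. The central observation is that the quantity to be bounded is itself a Wasserstein distance: writing $f(x):=\int d(x,y)\,\mathrm{d}\pi(y)$, the only coupling of $\delta_x$ with $\pi$ is the independent one (since $X\sim\delta_x$ forces $X=x$ almost surely), so $f(x)=W_1(\delta_x,\pi)$. Moreover $f(x)<\infty$, because $\pi$ has a finite first moment (it lies in $\PPP(\Omega)$), whence $f(x)\le d(x,o)+\int d(o,y)\,\mathrm{d}\pi(y)<\infty$ for any reference point $o$; this finiteness is what makes the final rearrangement legitimate.

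First I would apply the triangle inequality for $W_1$ through the intermediate measure $P_x^k$,
\[
f(x)=W_1(\delta_x,\pi)\le W_1(\delta_x,P_x^k)+W_1(P_x^k,\pi),
\]
and identify the first term as $W_1(\delta_x,P_x^k)=\int d(x,y)\,\mathrm{d}P_x^k(y)=J_k(x)$.

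Second, I would control the remaining term via stationarity. Since $\pi=\pi P^k=\int P_z^k\,\mathrm{d}\pi(z)$ is a mixture, the joint convexity of $W_1$ gives $W_1(P_x^k,\pi)\le\int W_1(P_x^k,P_z^k)\,\mathrm{d}\pi(z)$, and by Definition \ref{defmultistepRicci} we have $W_1(P_x^k,P_z^k)=(1-\kappa_k(x,z))\,d(x,z)\le(1-\kappa_k)\,d(x,z)$. Combining these yields $W_1(P_x^k,\pi)\le(1-\kappa_k)f(x)$, so the previous display becomes $f(x)\le J_k(x)+(1-\kappa_k)f(x)$. Because $f(x)<\infty$ and $\kappa_k>0$, rearranging gives the first claim, $f(x)\le J_k(x)/\kappa_k$.

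Finally, for the double integral I would introduce an arbitrary base point $x_0$ and integrate the triangle inequality $d(x,y)\le d(x,x_0)+d(x_0,y)$ against $\pi\otimes\pi$, obtaining $\int\int d(x,y)\,\mathrm{d}\pi(x)\,\mathrm{d}\pi(y)\le 2f(x_0)\le 2J_k(x_0)/\kappa_k$; since the left-hand side is independent of $x_0$, I take the infimum over $x_0$ to conclude. I do not anticipate a serious obstacle: the only delicate points are the finiteness of $f(x)=W_1(\delta_x,\pi)$ (needed to rearrange) and the mixture convexity of $W_1$, both standard. Equivalently, the entire statement follows by applying Proposition 24 of \cite{Ollivier2} to the kernel $P^k$, whose invariant measure is $\pi$ and whose coarse Ricci curvature is $\kappa_k$.
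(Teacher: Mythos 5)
Your proposal is correct and is essentially the paper's proof: the paper disposes of the statement by applying Proposition 24 of \cite{Ollivier2} to the $k$-step kernel $P^k$, and your self-contained argument (triangle inequality through $P_x^k$, convexity of $W_1$ over the mixture $\pi=\int P_z^k\,\mathrm{d}\pi(z)$ given by stationarity, and rearrangement using finiteness of the eccentricity) is precisely the proof of that cited proposition specialized to $P^k$. Your closing remark makes this reduction explicit, so the two routes coincide.
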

\begin{proof}
Apply Proposition 24 of \cite{Ollivier2} to $P^k$.
\end{proof}
\begin{remark}
In Section \ref{SecMCMC}, we are going to use this proposition for our bounds about MCMC empirical averages for non-stationary initial distributions.
\end{remark}

\subsection{Concentration bounds}\label{Secconcentrationbounds}
Similarly to the results of \cite{Ollivier2}, our concentration bounds will be based on 3 types of quantities related to the multi-step coarse Ricci curvature, the average step size of the Markov chain, and the dimension of the state space. In order to avoid unnecessary repetitions in the statement of the theorems, we introduce some notations (similarly to Definition 18 of \cite{Ollivier2}).
\begin{definition}\label{manydefs}
Firstly, we make a few definitions related to the multi-step coarse Ricci curvature.
Let us define, for any $x,y\in \Omega$,
\[\kappa_{\Sigma}^c(x,y):=\sum_{k=0}^{\infty}(1-\kappa_k(x,y)), \text{ let }\kappa_{\Sigma}^c:=\sum_{k=0}^{\infty}(1-\kappa_k), \text{ and } M:=\sup_{k\ge 0}(1-\kappa_k).
\]
The letter $c$ refers to complement (we add up $1-\kappa_k(x,y)$ instead of $\kappa_k(x,y)$).

Secondly, we state some definitions related to the step size of the Markov chain.
Let the \emph{(coarse) diffusion constant} of the random walk at $x$ be
\[\sigma(x):=\left(\frac{1}{2}\int\int d(y,z)^2 \mathrm{d}P_x(y) \mathrm{d}P_x(z)\right)^{1/2},\]
and let the \emph{average diffusion constant} be
\[\sigma:=\left(\E_\pi(\sigma^2)\right)^{1/2}.\]
Similarly, define the \emph{mean square jump length} as
\[\hat{\sigma}(x):=\left(\int_y d(x,y)^2 \mathrm{d}P_x(y)\right)^{1/2},\]
and the \emph{average mean square jump length} as
\[\hat{\sigma}:=\left(\E_{\pi}\left(\hat{\sigma}^2\right)\right)^{1/2}.\]
Let the \emph{local granularity} be $\sigma_{\infty}(x):=\frac{1}{2} \mathrm{diam\, Supp\,} P_x$ (the diameter of the support of $P_x$), and the \emph{granularity} be $\sigma_{\infty}:=\sup_{x\in \Omega}\sigma_{\infty}(x)$. 
Define the \emph{maximal diffusion constant} as $\sigma_{\mathrm{max}}=\sup_{x\in \Omega}\sigma(x)$, and the \emph{maximal mean square jump length} as $\hat{\sigma}_{\mathrm{max}}=\sup_{x\in \Omega}\hat{\sigma}(x)$.

Finally, we let the \emph{local dimension} at $x$ be
\[n(x):=\frac{\sigma(x)^2}{\sup\{\Var_{P_x} f, f: \mathrm{Supp\,} P_x \to \R \text{ 1 - Lipschitz}\}}.\]
\end{definition}

\begin{remark}
$\kappa_{\Sigma}^c$ will take the place of $1/\kappa$ in some of our results. It can be bounded using  \eqref{kappakleq} and \eqref{kappakleq2} as
\begin{equation}\label{eqkappasigmacbound}
\kappa_{\Sigma}^c\le \frac{\sum_{i=0}^{k-1}(1-\kappa_i)}{\kappa_{k}}\le  \frac{kM}{\kappa_{k}} \text{ for any }k\ge 1.
\end{equation}
The random walk can be divided into a drift term (corresponding to the change of the expected location), and a diffusion term (corresponding to the spread in space). The diffusion constant $\sigma^2(x)$ quantifies the diffusion term, when starting from point $x$. The local dimension $n(x)$ is a quantity related to the dimension of the state space $\Omega$.  It is always true that $n(x)\ge 1$.
\end{remark}

Our first concentration result is a variance bound for Lipschitz functions (generalising Proposition 32 of \cite{Ollivier2}).
\begin{thm}[Variance bound]\label{Varianceboundthm}
For any 1-Lipschitz function $f$ on $(\Omega, d)$, we have
\begin{equation}\label{nonrevvareq}\Var_{\pi}(f)\le \left(\sum_{k\ge 0} (1-\kappa_k)^2\right)\E_{\pi}\left(\frac{\sigma^2}{n}\right).\end{equation}
\end{thm}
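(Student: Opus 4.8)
The plan is to bound the variance of a 1-Lipschitz function by writing it as a telescoping sum of martingale-type increments along the Markov chain and controlling each increment using the multi-step curvature. First I would recall the standard semigroup identity for the variance: for a 1-Lipschitz $f$ and the stationary chain,
\begin{equation*}
\Var_{\pi}(f) = \sum_{k\ge 0}\E_{\pi}\!\left[(\mtx{P}^k f)^2-(\mtx{P}^{k+1}f)^2\right],
\end{equation*}
which follows by telescoping since $\mtx{P}^k f\to \E_{\pi}(f)$ in $L_2(\pi)$ as $k\to\infty$ (this convergence is guaranteed by the spectral/mixing consequences of $\kappa_k>0$ established earlier). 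The summand can be rewritten, using that $\pi$ is stationary and the conditional Jensen gap, as a sum of local variances:
\begin{equation*}
\E_{\pi}\!\left[(\mtx{P}^k f)^2-(\mtx{P}^{k+1}f)^2\right]
=\E_{\pi}\!\left[\Var_{P_x}\!\big(\mtx{P}^k f\big)\right],
\end{equation*}
so that each term measures the one-step fluctuation of the smoothed function $\mtx{P}^k f$.

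The key observation is that $\mtx{P}^k f$ is Lipschitz with a controllable constant. Indeed, the contraction estimate $W_1(P^k_x,P^k_y)\le (1-\kappa_k)\,d(x,y)$ (Proposition 20 of Ollivier applied as in the proof of Proposition \ref{propstatdist}) shows that for a 1-Lipschitz $f$,
\begin{equation*}
\big|\mtx{P}^k f(x)-\mtx{P}^k f(y)\big|
=\Big|\int f\,\mathrm{d}P^k_x-\int f\,\mathrm{d}P^k_y\Big|
\le W_1(P^k_x,P^k_y)\le (1-\kappa_k)\,d(x,y),
\end{equation*}
so $\mtx{P}^k f$ is $(1-\kappa_k)$-Lipschitz. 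I would then bound the local variance of this function under $P_x$ by the definition of the local dimension $n(x)$: since $\mtx{P}^k f$ is $(1-\kappa_k)$-Lipschitz, the function $\mtx{P}^k f/(1-\kappa_k)$ is 1-Lipschitz on $\mathrm{Supp\,}P_x$, and hence by the definition of $n(x)$,
\begin{equation*}
\Var_{P_x}\!\big(\mtx{P}^k f\big)\le (1-\kappa_k)^2\,\frac{\sigma(x)^2}{n(x)}.
\end{equation*}

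Finally I would assemble the pieces: substituting this local bound into the telescoped variance and summing over $k$ gives
\begin{equation*}
\Var_{\pi}(f)\le \sum_{k\ge 0}(1-\kappa_k)^2\,\E_{\pi}\!\left(\frac{\sigma^2}{n}\right),
\end{equation*}
which is exactly \eqref{nonrevvareq}. The main obstacle I anticipate is making the telescoping identity rigorous on a general (non-compact, non-reversible) Polish space — one must justify the $L_2(\pi)$ convergence of $\mtx{P}^k f$ to the mean and the interchange of the infinite sum with the expectation, handling the possibility that $\Var_{\pi}(f)=\infty$ or that $f$ is not square-integrable a priori. A secondary subtlety is that the local-variance identity relies on stationarity in the form $\E_{\pi}[(\mtx{P}^{k+1}f)^2]=\E_{\pi}[(\mtx{P}(\mtx{P}^k f))^2]$, so I would want to verify carefully that $\mtx{P}$ acts as a contraction on $L_2(\pi)$ and that Jensen's inequality applied pointwise integrates correctly against $\pi$; these are routine but must be stated cleanly to avoid measurability gaps.
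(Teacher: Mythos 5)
Your proposal is correct and follows essentially the same route as the paper's own proof: both telescope the variance into $\sum_{k\ge 0}\E_{\pi}\left[\Var_{P_x}(\mtx{P}^k f)\right]$, use the Wasserstein contraction to show $\mtx{P}^k f$ is $(1-\kappa_k)$-Lipschitz, and invoke the definition of the local dimension $n(x)$ to bound each local variance by $(1-\kappa_k)^2\sigma(x)^2/n(x)$. The only divergence is the tail-convergence step you flag as an obstacle: the paper does not appeal to spectral consequences of $\kappa_k>0$ (which the theorem does not assume), but instead first takes $f$ bounded, proves $\Var_{\pi}(P^k f)\to 0$ by a ball-truncation estimate $\Var(P^k f)\le 2(1-\kappa_k)^2r^2+2\|f\|_{\infty}^2\pi(\Omega\setminus B_r)$ with $r=(1-\kappa_k)^{-1/2}$, and then removes the boundedness assumption by a limiting argument.
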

When compared to Proposition 32 of \cite{Ollivier2}, in our bound, $\frac{1}{\kappa}$ is replaced by $\sum_{k\ge 0} (1-\kappa_k)^2$.

Our next result is a moment bound for Lipschitz functions of reversible chains.
\begin{thm}[Moment bound for reversible chains]\label{Revmomentboundthm}
Suppose that
\begin{equation}\label{revvarcond}\int_{y}d(x,y) \kappa_{\Sigma}^c(x,y)\mathrm{d}P_x(y) <\infty\text{ for } \pi - \text{almost every } x\in \Omega.\end{equation}
Then for any 1-Lipschitz function $f$ on $(\Omega, d)$, for any $p\ge 1$, we have
\[\E_{\pi}\left([f(X)-\E_{\pi}(f)]^{2p}\right)\le 
\left(\frac{(2p-1)\kappa_{\Sigma}^c}{2}\right)^p\cdot \E_{\pi}(\hat{\sigma}^{2p}).\]
\end{thm}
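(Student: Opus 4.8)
The plan is to reduce the whole statement to the single Poincaré-type inequality
\begin{equation}\label{eq:planpoincare}
\E_{\pi}\left(g^{2p}\right)\le \frac{(2p-1)\kappa_{\Sigma}^c}{2}\,\E_{\pi}\left(\hat{\sigma}^2\,|g|^{2p-2}\right),\qquad g:=f-\E_{\pi}(f),
\end{equation}
and then to close it by Hölder's inequality. Indeed, applying Hölder with exponents $p$ and $p/(p-1)$ to the right-hand side gives $\E_{\pi}(\hat{\sigma}^2|g|^{2p-2})\le (\E_{\pi}(\hat{\sigma}^{2p}))^{1/p}(\E_{\pi}(g^{2p}))^{1-1/p}$, since $(2p-2)\cdot p/(p-1)=2p$. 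Writing $A:=\E_{\pi}(g^{2p})$, inequality \eqref{eq:planpoincare} then reads $A\le \tfrac{(2p-1)\kappa_{\Sigma}^c}{2}(\E_{\pi}(\hat{\sigma}^{2p}))^{1/p}A^{1-1/p}$, and (assuming $0<A<\infty$) dividing by $A^{1-1/p}$ yields precisely the asserted bound $A\le \big(\tfrac{(2p-1)\kappa_{\Sigma}^c}{2}\big)^p\E_{\pi}(\hat{\sigma}^{2p})$.

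To prove \eqref{eq:planpoincare} I would use a telescoping decomposition together with reversibility. Set $g_k:=P^k g=P^k f-\E_{\pi}(f)$, so that $g_0=g$ and $g_k\to 0$ under the standing ergodicity assumptions. The multi-step curvature controls the Lipschitz constant of each $g_k$: because $f$ is $1$-Lipschitz, $|g_k(x)-g_k(y)|=|P^kf(x)-P^kf(y)|\le W_1(P^k_x,P^k_y)=(1-\kappa_k(x,y))d(x,y)$. Telescoping $g=\sum_{k\ge 0}(g_k-g_{k+1})$ and pairing with $g^{2p-1}$ gives, since $g_{k+1}=Pg_k$,
\[\E_{\pi}\left(g^{2p}\right)=\sum_{k\ge 0}\E_{\pi}\left((g_k-Pg_k)\,g^{2p-1}\right).\]
For a reversible kernel the Dirichlet-form identity $\E_{\pi}((u-Pu)v)=\tfrac12\,\E_{\pi}\E_{P_x}[(u(x)-u(Y))(v(x)-v(Y))]$, with $x$ distributed according to $\pi$ and $Y$ according to $P_x$, turns each summand into $\tfrac12\,\E_{\pi}\E_{P_x}[(g_k(x)-g_k(Y))(g(x)^{2p-1}-g(Y)^{2p-1})]$.

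Next I would bound the integrand pointwise. Using $|g_k(x)-g_k(Y)|\le (1-\kappa_k(x,Y))d(x,Y)$, the factorisation $a^{2p-1}-b^{2p-1}=(a-b)\sum_{j=0}^{2p-2}a^{j}b^{2p-2-j}$ with $a=g(x),b=g(Y)$, and the $1$-Lipschitz bound $|g(x)-g(Y)|\le d(x,Y)$, each summand is at most $\tfrac12(1-\kappa_k(x,Y))\,\E_{\pi}\E_{P_x}\big[d(x,Y)^2\sum_{j=0}^{2p-2}|g(x)|^{j}|g(Y)|^{2p-2-j}\big]$. Summing over $k$ replaces $\sum_k(1-\kappa_k(x,Y))$ by $\kappa_{\Sigma}^c(x,Y)\le \kappa_{\Sigma}^c$, the last inequality holding because $\kappa_k\le\kappa_k(x,y)$. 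For each of the $2p-1$ terms (for $p\ge 2$; the $p=1$ case has a single constant term and is immediate) I would apply the weighted AM--GM inequality $|g(x)|^{j}|g(Y)|^{2p-2-j}\le \tfrac{j}{2p-2}|g(x)|^{2p-2}+\tfrac{2p-2-j}{2p-2}|g(Y)|^{2p-2}$, and then invoke reversibility again: since the pair $(x,Y)$ is exchangeable under $\pi$, $\E_{\pi}\E_{P_x}[d(x,Y)^2|g(x)|^{2p-2}]=\E_{\pi}\E_{P_x}[d(x,Y)^2|g(Y)|^{2p-2}]=\E_{\pi}(\hat{\sigma}^2|g|^{2p-2})$, using $\E_{P_x}(d(x,Y)^2)=\hat{\sigma}(x)^2$. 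Hence every term contributes $\E_{\pi}(\hat{\sigma}^2|g|^{2p-2})$, the $2p-1$ terms sum to $(2p-1)\E_{\pi}(\hat{\sigma}^2|g|^{2p-2})$, and the surviving prefactor $\tfrac12$ produces \eqref{eq:planpoincare}. It is worth stressing that exploiting the exact factorisation (with its $2p-1$ summands) rather than a crude mean-value estimate is what yields the sharp constant $(2p-1)/2$ instead of $2p-1$.

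The main obstacle is analytic rather than algebraic: justifying the telescoping identity, the interchange of summation and expectation, and the a priori finiteness $A<\infty$ needed to divide in the closing step. One must check that $\E_{\pi}(g_k\,g^{2p-1})\to 0$ and that $\sum_k\E_{\pi}\E_{P_x}[(1-\kappa_k(x,Y))d(x,Y)^2(\cdots)]$ converges; this is exactly where hypothesis \eqref{revvarcond} is used, since it controls the doubly-averaged weight $\int d(x,y)\kappa_{\Sigma}^c(x,y)\,\mathrm{d}P_x(y)$. A clean way to proceed is to first establish the bound for bounded $1$-Lipschitz $f$, where all $g_k$ are uniformly bounded, $g_k\to 0$ in every $L^q(\pi)$, and Fubini applies, thereby also securing the finiteness $A<\infty$; the general case then follows by truncating $f$ to $(-n)\vee f\wedge n$ (still $1$-Lipschitz) and passing to the limit via monotone and dominated convergence. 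The same truncation legitimises the Dirichlet-form identity for the possibly unbounded test function $v=g^{2p-1}$.
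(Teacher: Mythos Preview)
Your argument is correct. The paper's own proof is a two-line appeal to Chatterjee's exchangeable-pair machinery: it builds the antisymmetric function $F(x,y)=\sum_{k\ge 0}(P^kf(x)-P^kf(y))$ (Lemma~\ref{Fxylemma}), bounds $\Delta(X)=\tfrac12\E(|F(X,X')||f(X)-f(X')|\,|\,X)\le \tfrac12\kappa_\Sigma^c\,\hat\sigma(X)^2$, and then invokes Chatterjee's moment inequality $\E_\pi(g^{2p})\le (2p-1)^p\E_\pi(\Delta^p)$ (Theorem~\ref{thm314}) as a black box. Your route is essentially an \emph{unpacking} of that black box in this specific setting: the telescoping $g=\sum_k(g_k-Pg_k)$ paired with $g^{2p-1}$ and the Dirichlet identity is exactly the computation $\E_\pi(g^{2p})=\E_\pi(F(X,X')\,g(X)^{2p-1})=\tfrac12\E_\pi[(g(X)^{2p-1}-g(X')^{2p-1})F(X,X')]$ hidden inside Chatterjee's proof, and your factorisation/AM--GM step plays the role of his moment recursion. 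What you gain is a self-contained argument plus the intermediate Poincar\'e-type estimate $\E_\pi(g^{2p})\le \tfrac{(2p-1)\kappa_\Sigma^c}{2}\E_\pi(\hat\sigma^2|g|^{2p-2})$, which is of independent interest and slightly sharper than what the black-box route records before the final H\"older step; what the paper gains is brevity and a clean separation between the probabilistic input ($\Delta(X)\le \tfrac12\kappa_\Sigma^c\hat\sigma(X)^2$) and the analytic machinery. Your handling of the integrability issues via truncation of $f$ is the right way to make the formal manipulations rigorous, and it mirrors the role of condition~\eqref{revvarcond} in justifying the construction of $F$ in Lemma~\ref{Fxylemma}.
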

Now we state a concentration bound for reversible chains.
\begin{thm}[Concentration for reversible chains]\label{revconcthm}
For reversible chains satisfying \eqref{revvarcond}, for any 1-Lipschitz function $f$ on $(\Omega, d)$ we have the Gaussian bound
\begin{equation}\label{revconceq1}
\PP_{\pi}(|f(X)-\E_{\pi}(f)|\ge t)\le 2\exp\left(-\frac{t^2}{\kappa_{\Sigma}^c\cdot \hatsigmamax^2}\right).
\end{equation}
Let $V:\Omega\to \R_+$ be a function satisfying that for every $x\in \Omega$,
\begin{equation}\int \kappa_{\Sigma}^c(x,y) d(x,y)^2 \mathrm{d}P_{x}(y) \le V(x).\end{equation}
Let $L:=8\E_{\pi}(V)/(\|V\|_{\Lip}^2\hat{\sigma}_{\max}^2 \kappa_{\Sigma}^c)$, where $\|V\|_{\Lip}$ is the Lipschitz coefficient of $V$. Then for any $t\ge 0$,
\begin{equation}\label{revconceq2}
\PP_{\pi}(|f(X)-\E_{\pi}(f)|\ge t)\le 2\exp\left(-\frac{t^2}{4\E_{\pi}(V)+4L^{-1/2}\cdot t}\right).
\end{equation}
\end{thm}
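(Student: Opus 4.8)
The plan is to obtain both tail bounds from the moment estimate of Theorem~\ref{Revmomentboundthm} by a standard moment-to-Laplace-transform conversion, reading \eqref{revconceq1} as the case of a uniform variance proxy and \eqref{revconceq2} as the refinement that tracks the spatial variation of the local diffusion term. Throughout I would write $Z:=f(X)-\E_{\pi}(f)$, a centred random variable under $\pi$, and note that the integrability hypothesis \eqref{revvarcond} is exactly what guarantees that the moments, and hence the Laplace transforms appearing below, are finite.

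For the Gaussian bound \eqref{revconceq1}, insert $\E_{\pi}(\hat{\sigma}^{2p})\le \hatsigmamax^{2p}$ into Theorem~\ref{Revmomentboundthm} to control $\E_{\pi}(Z^{2p})$ by $(2p-1)!!\,v^{p}$ with $v:=\kappa_{\Sigma}^c\hatsigmamax^2/2$; here the relevant fact is that the even moments grow like those of a centred Gaussian of variance $v$ (the double-factorial form, of which the $((2p-1)\kappa_{\Sigma}^c/2)^p$ written in the statement is a cruder majorant, since $(2p-1)^p\ge(2p-1)!!$). I would then avoid odd moments altogether by splitting the two tails: $\PP_{\pi}(Z\ge t)\le e^{-\lambda t}\E_{\pi}(e^{\lambda Z})$ and $\PP_{\pi}(Z\le -t)\le e^{-\lambda t}\E_{\pi}(e^{-\lambda Z})$, whose sum is $2e^{-\lambda t}\E_{\pi}(\cosh(\lambda Z))$. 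Expanding $\cosh$ and using the identity $(2p-1)!!/(2p)!=1/(2^p p!)$ gives $\E_{\pi}(\cosh(\lambda Z))\le e^{\lambda^2 v/2}$, so that $\PP_{\pi}(|Z|\ge t)\le 2e^{-\lambda t+\lambda^2 v/2}$; optimising at $\lambda=t/v$ yields $2\exp(-t^2/(2v))=2\exp(-t^2/(\kappa_{\Sigma}^c\hatsigmamax^2))$, which is \eqref{revconceq1}.

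For \eqref{revconceq2} I would rerun the same recursion but retain the \emph{local} quantity $\int \kappa_{\Sigma}^c(x,y)\,d(x,y)^2\,\mathrm{d}P_x(y)$, bounded by $V(x)$, in place of the uniform constant $\kappa_{\Sigma}^c\hatsigmamax^2$. The new phenomenon is that $V$ is not constant, so the step of the moment recursion that couples the $2p$-th moment to the $(2p-2)$-th now involves $\E_{\pi}(V(X)\,Z^{2p-2})$, which I would decompose as $\E_{\pi}(V)\,\E_{\pi}(Z^{2p-2})$ plus the correlation between $V(X)$ and $Z^{2p-2}$. Since $V$ is $\|V\|_{\Lip}$-Lipschitz and $f$ is $1$-Lipschitz, this correlation is controlled by $\|V\|_{\Lip}$ together with a strictly lower moment of $Z$ and a local diffusion term (writing the covariance of two functions on $\Omega$ as a symmetric double integral over an independent pair and using both Lipschitz bounds on the increments). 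This turns the purely Gaussian recursion into a sub-gamma one, coupling $\E_{\pi}(Z^{2p})$ to both $\E_{\pi}(Z^{2p-2})$ and a lower moment; equivalently, the Laplace transform obeys a bound of the form $\E_{\pi}(e^{\lambda Z})\le \exp\!\big(\tfrac{\lambda^2 \E_{\pi}(V)}{1-c\lambda}\big)$ for some scale $c$. The resulting Bernstein-type tail has a Gaussian part governed by $\E_{\pi}(V)$ and a linear-in-$t$ part; matching constants produces the denominator $4\E_{\pi}(V)+4L^{-1/2}t$, with $L=8\E_{\pi}(V)/(\|V\|_{\Lip}^2\hatsigmamax^2\kappa_{\Sigma}^c)$ fixing the scale at which the exponential regime takes over.

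The main obstacle is \eqref{revconceq2}: propagating the Lipschitz fluctuation of $V$ through the moment recursion so that it produces precisely the linear term $4L^{-1/2}t$ with the stated constant $L$, rather than merely a qualitatively Bernstein-type bound. For \eqref{revconceq1} the only subtle point is extracting the sharp Gaussian constant, which is why one must use the double-factorial growth of the moments together with the two-sided $\cosh$ estimate, instead of a crude Markov bound on $Z^{2p}$ (which would lose a factor of $\econst$ in the exponent).
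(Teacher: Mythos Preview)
Your route through Theorem~\ref{Revmomentboundthm} does not recover the constant in \eqref{revconceq1}. The moment bound actually available is $\E_{\pi}(Z^{2p})\le (2p-1)^p v^p$ with $v=\kappa_{\Sigma}^c\hatsigmamax^2/2$, not $(2p-1)!!\,v^p$. You assert that the double-factorial growth is the ``relevant fact'' and that Theorem~\ref{Revmomentboundthm} is merely a ``cruder majorant'' of it, but this is backwards: Theorem~\ref{Revmomentboundthm} comes from Chatterjee's Theorem~3.14, which genuinely produces the factor $(2p-1)^p$, and no sharper moment estimate is proved anywhere in the paper. Since $(2p-1)^p/(2p-1)!!\asymp e^{p}$, feeding the available moments into your $\cosh$ expansion gives only $\E_{\pi}(\cosh(\lambda Z))\le \exp(\lambda^2 ev/2)$, hence $\PP_{\pi}(|Z|\ge t)\le 2\exp(-t^2/(e\,\kappa_{\Sigma}^c\hatsigmamax^2))$, exactly the factor-of-$\econst$ loss you wanted to avoid. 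The $\cosh$ trick does not repair this; it only helps if the even moments are already of Gaussian size.

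The paper bypasses moments entirely and works at the level of the exchangeable-pair quantity $\Delta(X)$. One constructs $F(x,y)=\sum_{k\ge 0}(P^kf(x)-P^kf(y))$ via Lemma~\ref{Fxylemma}, bounds
\[
\Delta(X)\le \tfrac{1}{2}\,\E\!\left(\kappa_{\Sigma}^c(X,X')\,d(X,X')^2\,\middle|\,X\right)\le \tfrac{1}{2}\kappa_{\Sigma}^c\hatsigmamax^2,
\]
and invokes Chatterjee's Theorem~3.3 directly, which already contains the sharp sub-Gaussian MGF bound $\E(e^{\theta f})\le e^{\theta^2 C/2}$ when $\Delta(X)\le C$. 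That is what delivers \eqref{revconceq1} with the stated constant.

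For \eqref{revconceq2} your outline (propagate the Lipschitz fluctuation of $V$ through a moment recursion to get a sub-gamma bound) is plausible in spirit but, as you acknowledge, does not pin down the constant $L$. The paper's argument is much shorter and structurally different: from the $\Delta$-bound one has $\Delta(X)\le V(X)/2$, and since $V$ is itself Lipschitz, the MGF inequality already established in the first part (applied to $V/\|V\|_{\Lip}$) gives
\[
\E(e^{L\Delta(X)})\le \E(e^{LV(X)/2})\le \exp\!\big(\tfrac{L}{2}\E_{\pi}(V)+\tfrac{L^2}{16}\|V\|_{\Lip}^2\kappa_{\Sigma}^c\hatsigmamax^2\big).
\]
Choosing $L=8\E_{\pi}(V)/(\|V\|_{\Lip}^2\hatsigmamax^2\kappa_{\Sigma}^c)$ makes the two exponents equal, so $r(L)=\E_{\pi}(V)$, and Chatterjee's Theorem~3.13 yields \eqref{revconceq2} immediately. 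The key idea you are missing is this bootstrap: reuse the Gaussian MGF bound on $V$ to control $\E(e^{L\Delta(X)})$, rather than attempting a separate mixed-moment recursion.
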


More generally, without using reversibility, we have the following concentration bound (generalising Theorem 33 of \cite{Ollivier2}).
\begin{thm}[Concentration without reversibility]\label{nonrevconcthm}
For any function $f$ with Lipschitz-coefficient $\|f\|_{\Lip}$ on $(\Omega, d)$,
let $S_{\max}:=\sup_{x\in \Omega}\frac{\sigma^2(x)}{n(x)}$, and
denote
\[D_{\max}:=2\|f\|_{\Lip}^2 S_{\max} \cdot \sum_{i=0}^{\infty}\exp\left(\frac{2}{3} (1-\kappa_i) \|f\|_{\Lip}\right) (1-\kappa_i)^2.\]
Let $t_{\max}:=D_{\max}/(6\sigma_{\infty})$, then for $0\le t\le t_{\max}$, we have the Gaussian bound
\begin{equation}\label{nonrevgaussbound}
\PP_{\pi}(|f(X)-\E_{\pi}(f)|\ge t)\le 2\exp\left(-\frac{t^2}{D_{\max}}\right),
\end{equation}
while for $t> t_{\max}$, we have the exponential bound
\begin{equation}\label{nonrevexpbound}
\PP_{\pi}(|f(X)-\E_{\pi}(f)|\ge t)\le 2\exp\left(-\frac{t_{\max}^2}{D_{\max}}-\frac{t-t_{\max}}{3\sigma_{\infty}}\right).
\end{equation}
\end{thm}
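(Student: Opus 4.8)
The plan is to reduce the statement to a Chernoff bound on the moment generating function of $f(X)-\E_\pi(f)$ under $\pi$, and to obtain that bound through a telescoping martingale decomposition along a stationary trajectory. First I would fix $N\ge 1$, run the chain from $X_0\sim\pi$, and introduce the increments
\[D_k:=\mathbf{P}^{N-1-k}f(X_{k+1})-\mathbf{P}^{N-k}f(X_k),\qquad k=0,\dots,N-1,\]
where $\mathbf{P}$ is the averaging operator $(\mathbf{P}g)(x)=\E_{P_x}(g)$. Since $\E[D_k\mid\mathcal F_k]=0$ with $\mathcal F_k=\sigma(X_0,\dots,X_k)$, these are genuine martingale differences, and summing telescopes to $\sum_{k=0}^{N-1}D_k=f(X_N)-\mathbf{P}^{N}f(X_0)$. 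Because $\mathbf{P}^{i}f$ is $(1-\kappa_i)\|f\|_{\Lip}$-Lipschitz, directly from $|\mathbf{P}^{i}f(x)-\mathbf{P}^{i}f(y)|\le\|f\|_{\Lip}W_1(P^i_x,P^i_y)\le(1-\kappa_i)\|f\|_{\Lip}\,d(x,y)$, the remainder $\mathbf{P}^{N}f(X_0)-\E_\pi(f)$ tends to $0$ as $N\to\infty$ (its Lipschitz constant vanishes and it integrates to $\E_\pi(f)$, with the finiteness of $\int\!\int d\,\mathrm d\pi\,\mathrm d\pi$ from Proposition~\ref{L1AverageBonnetMyers} controlling the correction). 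Using stationarity $X_N\sim\pi$, it then suffices to bound the m.g.f.\ of $\sum_k D_k$.

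Next I would establish a per-step exponential bound. Writing $g=\mathbf{P}^{i}f$ with $i=N-1-k$ and $L_i:=(1-\kappa_i)\|f\|_{\Lip}$, conditionally on $X_k=x$ the increment $D_k=g(X_{k+1})-\mathbf{P}g(x)$ is centred, with conditional variance
\[\Var_{P_x}(g)\le L_i^2\,\frac{\sigma^2(x)}{n(x)}\le L_i^2 S_{\max}=:v_i\]
by the very definition of the local dimension $n(x)$, and almost surely bounded by the oscillation of $g$ on $\mathrm{Supp}\,P_x$, giving $|D_k|\le 2L_i\sigma_\infty=:b_i$. For a centred variable with variance at most $v$ and range at most $b$, the elementary estimate $\E e^{\theta Z}\le\exp\!\big(\tfrac{v}{b^2}(e^{\theta b}-1-\theta b)\big)\le\exp\!\big(\tfrac{\theta^2 v}{2}e^{\theta b}\big)$, valid for $\theta\ge 0$, yields the deterministic conditional bound $\E[e^{\theta D_k}\mid\mathcal F_k]\le\exp(\tfrac{\theta^2 v_i}{2}e^{\theta b_i})$. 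Iterating the tower property and letting $N\to\infty$ then gives
\[\E_\pi\,e^{\theta(f-\E_\pi(f))}\le\exp\!\Big(\sum_{i=0}^{\infty}\tfrac{\theta^2}{2}L_i^2 S_{\max}\,e^{2\theta L_i\sigma_\infty}\Big).\]

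The crucial choice is to restrict to $0\le\theta\le\tfrac{1}{3\sigma_\infty}$, so that $e^{2\theta L_i\sigma_\infty}\le e^{\frac23 L_i}=e^{\frac23(1-\kappa_i)\|f\|_{\Lip}}$; summing and recognising the definition of $D_{\max}$ collapses the bound to $\E_\pi\,e^{\theta(f-\E_\pi f)}\le\exp(\theta^2 D_{\max}/4)$ on this range. I would finish with the Chernoff optimisation $\PP_\pi(f-\E_\pi f\ge t)\le\inf_{0\le\theta\le 1/(3\sigma_\infty)}e^{-\theta t+\theta^2 D_{\max}/4}$: the unconstrained optimum $\theta^{\ast}=2t/D_{\max}$ is admissible exactly when $t\le D_{\max}/(6\sigma_\infty)=t_{\max}$, producing the Gaussian exponent $-t^2/D_{\max}$; for $t>t_{\max}$ one fixes $\theta=\tfrac{1}{3\sigma_\infty}$ and a short computation gives precisely $-t_{\max}^2/D_{\max}-(t-t_{\max})/(3\sigma_\infty)$. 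Applying the same argument to $-f$ and adding the two tails supplies the factor $2$.

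I expect the main obstacle to be twofold and both localised. First, pinning down the right elementary m.g.f.\ lemma and verifying $e^u-1-u\le\tfrac{u^2}{2}e^u$ for $u\ge 0$, so that the per-step bound carries the factor $e^{2\theta L_i\sigma_\infty}$ that becomes $e^{\frac23(1-\kappa_i)\|f\|_{\Lip}}$ under the restriction $\theta\le 1/(3\sigma_\infty)$; the whole architecture of $D_{\max}$ and $t_{\max}$ is pinned by this constant $2/3$. Second, justifying the passage $N\to\infty$, namely that the telescoping remainder $\mathbf{P}^{N}f(X_0)-\E_\pi(f)$ is negligible \emph{inside} the m.g.f.\ (not merely in probability) on a possibly unbounded state space, which forces one to use the vanishing Lipschitz constant $(1-\kappa_N)\|f\|_{\Lip}$ together with integrability of $d(\cdot,\cdot)$ under $\pi$, e.g.\ via a H\"older split with a slightly enlarged $\theta$. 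The remaining bookkeeping matching $D_{\max}$, $t_{\max}$, and the two exponents is then routine.
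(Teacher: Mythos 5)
Your proposal is correct, and at its core it is the paper's own proof rewritten in martingale language rather than a genuinely different route. Your per-step estimate --- conditional variance at most $L_i^2\,\sigma(x)^2/n(x)$ by the definition of $n(x)$, range at most $2L_i\sigma_\infty$ by the definition of $\sigma_\infty$, Bennett's m.g.f.\ lemma, and then $\econst^{2\theta L_i\sigma_{\infty}}\le \econst^{(2/3)L_i}$ under $\theta\le 1/(3\sigma_\infty)$ --- is exactly the paper's Lemma \ref{nonrevlemma}, with the identical factor $g(\alpha)=\econst^{(2/3)\alpha}\alpha^2/2$; and your backward tower iteration is the same inequality as the paper's recursion $f_{k+1}=\mtx{P}f_k+\lambda g(\|f_k\|_{\Lip})S_{\max}$, since $\E\bigl[\econst^{\theta\sum_k D_k}\mid X_0=x\bigr]=\econst^{-\theta \mtx{P}^N f(x)}\,\bigl(\mtx{P}^N\econst^{\theta f}\bigr)(x)$, so both arguments amount to $\bigl(\mtx{P}^N \econst^{\theta f}\bigr)(x)\le \econst^{\theta \mtx{P}^N f(x)+\theta^2 D_{\max}/4}$. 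The constrained Chernoff optimisation at the end is the paper's Lemma \ref{tmaxlambdamaxlemma}, with the same $t_{\max}=D_{\max}/(6\sigma_\infty)$ split. The one substantive difference is how stationarity enters: the paper takes the pointwise limit $(\mtx{P}^k\econst^{\lambda f})(x)\to \E_\pi(\econst^{\lambda f})$, whereas you start the chain at $\pi$ and must kill the remainder $R_N=\mtx{P}^N f(X_0)-\E_\pi(f)$ inside the m.g.f. Here one caution about your proposed repair: finiteness of $\int\!\int d\,\mathrm{d}\pi\,\mathrm{d}\pi$ gives only a first moment of the eccentricity $E(X_0)$, which is \emph{not} enough for a H\"older split (that needs an exponential moment, and a heavy-tailed $E(X_0)$ would make $\E\,\econst^{c E(X_0)}=\infty$ for every $c>0$). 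The gap closes more simply: in the only non-vacuous regime of the theorem one has $\sigma_\infty<\infty$ and $D_{\max}<\infty$, hence $1-\kappa_k\to 0$ and in particular $\kappa_k>0$ for some fixed $k$; then Proposition \ref{L1AverageBonnetMyers} (see also \eqref{ExboundL1BonnetMyers}) gives $E(x)\le J_k(x)/\kappa_k\le 2k\sigma_\infty/\kappa_k$ \emph{uniformly} in $x$, so $|R_N|\le(1-\kappa_N)\|f\|_{\Lip}\,2k\sigma_\infty/\kappa_k\to 0$ uniformly, and no H\"older argument is needed at all: $\E\bigl[\econst^{\theta\sum_k D_k+\theta R_N}\bigr]\le \econst^{\theta\|R_N\|_\infty}\econst^{\theta^2D_{\max}/4}\to\econst^{\theta^2 D_{\max}/4}$.
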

\begin{thm}[Second concentration bound without reversibility]\label{nonrevconcthm2}
Alternatively, suppose that $\sigma^2(x)/n(x)\le S(x)$ for some $S:\Omega\to \R$ (for every $x\in \Omega$). Let $K$ be a positive integer such that $\kappa_K>0$. Let 
\[D:=\|f\|_{\Lip}^2 \E_{\pi}(S) \cdot \frac{16M^2 K}{\kappa_K-\kappa_K^2/4}.\]
Let $\lambda_{\max}':=\min\left(\frac{1}{6M\sigma_{\infty}\|f\|_{\Lip}}, \frac{\kappa_K}{4KM^2\|S\|_{\Lip}\|f\|_{\Lip}}\right)$, and $t_{\max}':=D \lambda_{\max}'/2$. Then for $0\le t\le t_{\max}'$, 
\begin{equation}\label{nonrevgaussbound2}
\PP_{\pi}(|f(X)-\E_{\pi}(f)|\ge t)\le 2\exp\left(-\frac{t^2}{D}\right),
\end{equation}
while for $t> t_{\max}'$,
\begin{equation}\label{nonrevexpbound2}
\PP_{\pi}(|f(X)-\E_{\pi}(f)|\ge t)\le 2\exp\left(-\frac{t_{\max}^2}{D}-(t-t_{\max}')\cdot \lambda_{\max}'\right).
\end{equation}
\end{thm}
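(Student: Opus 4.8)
The plan is to bound the logarithmic moment generating function $G(\lambda):=\log\E_{\pi}\!\big(e^{\lambda(f-\E_{\pi}(f))}\big)$ for $0\le\lambda\le\lambda'_{\max}$ and then convert to tail bounds by Chernoff's inequality. Write $u_i:=\mtx{P}^i f$. Two facts drive the argument: by Kantorovich duality (Proposition 20 of \cite{Ollivier2}) the kernel contracts Lipschitz functions, so $\|u_i\|_{\Lip}\le(1-\kappa_i)\|f\|_{\Lip}\le M\|f\|_{\Lip}$; and since $\kappa_K>0$, \eqref{kappakleq2} forces $1-\kappa_i$ to decay geometrically, whence $\|u_i\|_{\Lip}\to0$. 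As $\diam(\Omega)<\infty$ by Proposition \ref{L1BonnetMyers}, the oscillation of $u_i$ is at most $\|u_i\|_{\Lip}\diam(\Omega)\to0$, so $u_i\to\E_{\pi}(f)$ uniformly (the limiting constant being $\E_{\pi}(u_i)=\E_{\pi}(f)$ by stationarity). Setting $\Phi_i:=\E_{\pi}(e^{\lambda u_i})$ we then have the telescoping identity
\[G(\lambda)=\sum_{i=0}^{\infty}\big(\log\Phi_i-\log\Phi_{i+1}\big).\]
Note that only stationarity and the contraction property are used, so reversibility is not needed.

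For each term, stationarity gives $\Phi_i=\E_{\pi}(\mtx{P}(e^{\lambda u_i}))$, and pointwise $\mtx{P}(e^{\lambda u_i})(x)=e^{\lambda u_{i+1}(x)}\,\E_{P_x}\!\big(e^{\lambda(u_i-\E_{P_x}u_i)}\big)$ with $u_{i+1}=\mtx{P}u_i$. On $\mathrm{Supp}\,P_x$ the function $u_i$ has range at most $2\sigma_{\infty}(x)\|u_i\|_{\Lip}$ and local variance $\Var_{P_x}(u_i)\le\|u_i\|_{\Lip}^2\sigma(x)^2/n(x)\le(1-\kappa_i)^2\|f\|_{\Lip}^2 S(x)$, so a Bennett--Bernstein estimate for bounded variables yields
\[\E_{P_x}\!\big(e^{\lambda(u_i-\E_{P_x}u_i)}\big)\le\exp\!\left(\frac{\lambda^2(1-\kappa_i)^2\|f\|_{\Lip}^2 S(x)}{2\,(1-\tfrac13\lambda\cdot 2\sigma_{\infty}M\|f\|_{\Lip})}\right).\]
The first half of $\lambda'_{\max}$, namely $\lambda\le 1/(6M\sigma_{\infty}\|f\|_{\Lip})$, keeps the denominator above $8/9$, so this factor is at most $\exp\big(c\lambda^2(1-\kappa_i)^2\|f\|_{\Lip}^2 S(x)\big)$ for an explicit $c$ slightly larger than $1/2$.

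The main obstacle is the residual dependence on $S(x)$. I would split $S(x)=\E_{\pi}(S)+(S(x)-\E_{\pi}(S))$. The constant part sums over $i$ to $c\lambda^2\|f\|_{\Lip}^2\E_{\pi}(S)\sum_i(1-\kappa_i)^2$, and from $\kappa_K>0$ together with \eqref{kappakleq2} one gets $\sum_i(1-\kappa_i)^2\le M^2K/(1-(1-\kappa_K)^2)=M^2K/(\kappa_K(2-\kappa_K))$; combined with the Bernstein constant this produces exactly the coefficient $16M^2K/(\kappa_K-\kappa_K^2/4)$ defining $D$, the factor $16$ and the looser denominator $\kappa_K-\kappa_K^2/4$ leaving slack for the correction below. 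The fluctuating part combines with $e^{\lambda u_{i+1}(x)}$ into $e^{\lambda\tilde u_{i+1}(x)}$, where $\tilde u_{i+1}:=u_{i+1}+c\lambda(1-\kappa_i)^2\|f\|_{\Lip}^2(S-\E_{\pi}S)$ is Lipschitz with constant exceeding $\|u_{i+1}\|_{\Lip}$ by at most $c\lambda(1-\kappa_i)^2\|f\|_{\Lip}^2\|S\|_{\Lip}$. Comparing $\E_{\pi}(e^{\lambda\tilde u_{i+1}})$ with $\Phi_{i+1}$ is the crux: I would re-run the telescoping on these perturbed functions, tracking an effective Lipschitz constant and showing—using the second half of $\lambda'_{\max}$, namely $\lambda\le\kappa_K/(4KM^2\|S\|_{\Lip}\|f\|_{\Lip})$—that the accumulated perturbations form a convergent geometric series whose total contribution is absorbed by the slack already built into $D$. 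Controlling the spatial variation of the variance proxy $S$ through its Lipschitz norm is the only genuinely new difficulty relative to Theorem \ref{nonrevconcthm}, which sidesteps it by replacing $\E_{\pi}(S)$ with the uniform bound $S_{\max}$; it is precisely what forces the $\|S\|_{\Lip}$-term into $\lambda'_{\max}$.

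Assembling the two steps yields $G(\lambda)\le\lambda^2 D/4$ for all $0\le\lambda\le\lambda'_{\max}$. Chernoff's inequality then gives $\PP_{\pi}(f-\E_{\pi}(f)\ge t)\le\exp\big(-\sup_{0\le\lambda\le\lambda'_{\max}}(\lambda t-\lambda^2 D/4)\big)$. For $t\le t'_{\max}=D\lambda'_{\max}/2$ the unconstrained optimiser $\lambda=2t/D$ is admissible and returns the Gaussian bound $\exp(-t^2/D)$ of \eqref{nonrevgaussbound2}; for $t>t'_{\max}$ the optimum is attained at the endpoint $\lambda=\lambda'_{\max}$, producing the linear-in-$t$ exponent of \eqref{nonrevexpbound2}. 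Applying the identical argument to $-f$ and summing the two one-sided tails supplies the prefactor $2$, completing the proof.
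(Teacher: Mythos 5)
Your overall strategy coincides with the paper's in every essential respect: the local Bennett--Bernstein estimate on $\E_{P_x}\bigl(e^{\lambda(u_i-\E_{P_x}u_i)}\bigr)$ is exactly the paper's Lemma \ref{nonrevlemma} (with range $2\sigma_\infty\|u_i\|_{\Lip}$ and variance $\|u_i\|_{\Lip}^2\sigma(x)^2/n(x)$), the two halves of $\lambda_{\max}'$ play the same two roles, the $x$-dependence of $S$ is handled by folding it into perturbed functions whose Lipschitz constants must be tracked, and the final step is the same constrained Chernoff optimisation (Lemma \ref{tmaxlambdamaxlemma}). Your telescoping of $\log\Phi_i$ is only a cosmetic reorganisation of the paper's forward iteration $(\mtx{P}^k e^{\lambda\hat f})(x)\le e^{\lambda \hat f_k(x)}$ with $\hat f_{k+1}=\mtx{P}\hat f_k+\lambda g(\|\hat f_k\|_{\Lip})S$: once you admit that each perturbed function must itself be pushed through the kernel, your scheme collapses into that recursion.

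The one genuine shortfall is that your crux step --- ``re-run the telescoping on these perturbed functions \ldots the accumulated perturbations form a convergent geometric series whose total contribution is absorbed by the slack already built into $D$'' --- is asserted rather than proven, and it hides the only hard part of the theorem. The obstruction is that the Lipschitz recursion is \emph{nonlinear}: by \eqref{hatfklipbound}, $\|\hat f_k\|_{\Lip}\le(1-\kappa_k)\|\hat f\|_{\Lip}+\lambda\|S\|_{\Lip}\sum_{i=1}^{k}(1-\kappa_{k-i})\,g(\|\hat f_{i-1}\|_{\Lip})$ with $g$ quadratic, so each perturbation feeds back into all later Lipschitz constants; a one-shot geometric-series summation on top of the unperturbed decay does not close. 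The paper resolves this by a bootstrap: first the crude bound $\|\hat f_j\|_{\Lip}\le 1$ for all $j$ (valid precisely because $\lambda\le \kappa_K/(2KM\|S\|_{\Lip})$ after normalisation), then an induction ($F_k\le G_{\lfloor k/K\rfloor+1}$, $G_k\le(1-\kappa_K/2)^{k-1}$) showing that the perturbed constants still decay, but at the halved rate $1-\kappa_K/2$ per $K$ steps. Note also that your picture of ``slack'' in $D$ is slightly off: the denominator $\kappa_K-\kappa_K^2/4=1-(1-\kappa_K/2)^2$ is not spare room left over to absorb corrections, it \emph{is} the exact cost of that degradation, since $\sum_i g(\|\hat f_i\|_{\Lip})\le K\sum_j(1-\kappa_K/2)^{2j}=K/(\kappa_K-\kappa_K^2/4)$, while the factor $16M^2$ comes from the normalisation $\hat f=f/(2M\|f\|_{\Lip})$ squared times the factor $4$ in the Chernoff lemma. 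A minor additional point: your appeal to Proposition \ref{L1BonnetMyers} for $\diam(\Omega)<\infty$ is unjustified (it needs $\sup_xJ_k(x)<\infty$, which is not assumed); the convergence $u_i\to\E_\pi(f)$ should instead be argued pointwise via $|u_i(x)-\E_\pi f|\le\|f\|_{\Lip}(1-\kappa_i)E(x)$ with the eccentricity $E(x)$ finite for $\pi$-a.e.\ $x$.
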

\begin{remark}
There are some important differences between the inequalities with or without reversibility. Firstly, in the reversible case, Theorem \ref{revconcthm} is not using the maximal jump diameter $\sigma_{\infty}$, thus it may give better bounds than Theorem \ref{nonrevconcthm} in cases when $\sigma_{\infty}$ is very large (or infinity) compared to the typical jump length. Moreover, we have also obtained moment bounds under similar conditions in Theorem \ref{Revmomentboundthm}. However, these results ignore the local dimension $n(x)$, while 
Theorems \ref{Varianceboundthm} and \ref{nonrevconcthm} take it into account, and thus they can give better bounds when $n(x)\gg 1$. The variance, moment, and concentration bounds above can be applied to most of our examples in Section \ref{SecApplications}.
\end{remark}

\subsection{MCMC empirical averages}\label{SecMCMC}
\cite{Ollivier3} has proven variance bounds and concentration inequalities for MCMC empirical averages under the positive Ricci curvature assumption ($\kappa>0$). Here we generalise these results using the multi-step Ricci curvature, and then compare them with inequalities in the literature obtained by spectral methods (for the proofs, see Section \ref{secproofmcmc}).

Our results will prove variance and concentration bounds for the empirical averages $Z:=\left(\sum_{i=t_0+1}^{N}f(X_i)\right)/(N-t_0)$, here $t_0$ is the so called ``burn-in" time.
For some initial distribution $X_1\sim q$, our results will show concentration of $Z$ around its mean denoted by $\E_{q}(Z)$.
The following proposition (generalising Proposition 1 of \cite{Ollivier3}) bounds the bias $|\E_{q}(Z)-\E_{\pi}(f)|$.
\begin{proposition}[Bound on the bias of MCMC empirical averages]\label{biasboundprop}
For any Lipschitz $f:\Omega\to \R$, for initial distribution $X_1\sim q$, we have
\[|\E_{q}(Z)-\E_{\pi}(f)|\le \frac{(1-\kappa_{t_0+1})\kappa_{\Sigma}^c}{N-t_0}W_{1}(q,\pi)\|f\|_{\Lip}.\]
\end{proposition}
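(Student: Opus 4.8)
The plan is to bound the bias by writing it as a telescoping-type estimate through the $k$-step kernels and summing the resulting geometric-like contributions, exploiting that $Z$ is an average of $f(X_i)$ over $i=t_0+1,\ldots,N$. First I would observe that since $f$ is $1$-Lipschitz (after dividing through by $\|f\|_{\Lip}$, or equivalently carrying the factor along), the quantity $|\E_{q}(Z)-\E_{\pi}(f)|$ can be controlled termwise. Writing $\E_q(Z)=\frac{1}{N-t_0}\sum_{i=t_0+1}^{N}\E_{q}(f(X_i))$ and $\E_{\pi}(f)=\frac{1}{N-t_0}\sum_{i=t_0+1}^N \E_{\pi}(f)$, the triangle inequality gives
\[
|\E_{q}(Z)-\E_{\pi}(f)|\le \frac{1}{N-t_0}\sum_{i=t_0+1}^{N}\bigl|\E_{q}(f(X_i))-\E_{\pi}(f)\bigr|.
\]
Since $X_i\sim q\mtx{P}^{i-1}$ and $\pi=\pi\mtx{P}^{i-1}$, each summand is $|\E_{q\mtx{P}^{i-1}}(f)-\E_{\pi\mtx{P}^{i-1}}(f)|$, which for a $1$-Lipschitz $f$ is at most $W_1(q\mtx{P}^{i-1},\pi\mtx{P}^{i-1})$ by the dual (Kantorovich–Rubinstein) characterisation of the Wasserstein distance.

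Next I would apply the contraction estimate from Proposition \ref{propstatdist}, namely $W_1(\mtx{P}^{j}\mu,\mtx{P}^{j}\mu')\le (1-\kappa_j)W_1(\mu,\mu')$ (Proposition 20 of \cite{Ollivier2} applied to $P^j$), with $\mu=q$, $\mu'=\pi$, and $j=i-1$. This yields
\[
|\E_{q}(Z)-\E_{\pi}(f)|\le \frac{\|f\|_{\Lip}}{N-t_0}\sum_{i=t_0+1}^{N}(1-\kappa_{i-1})\,W_1(q,\pi)
=\frac{\|f\|_{\Lip}W_1(q,\pi)}{N-t_0}\sum_{j=t_0}^{N-1}(1-\kappa_{j}).
\]
Now the subadditivity inequality \eqref{kappakleq}, which gives $1-\kappa_{t_0+m}\le (1-\kappa_{t_0})(1-\kappa_m)$ for the leading step, is the key algebraic tool: writing each $j=t_0+m$ with $m\ge 0$, I would bound $\sum_{j=t_0}^{N-1}(1-\kappa_j)\le (1-\kappa_{t_0})\sum_{m\ge 0}(1-\kappa_m)=(1-\kappa_{t_0})\kappa_{\Sigma}^c$. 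However, the statement has $1-\kappa_{t_0+1}$ rather than $1-\kappa_{t_0}$, so the cleaner route is to index the burn-in as starting at $i=t_0+1$, set $j=i-1\ge t_0$, and factor out $1-\kappa_{t_0+1}$ from the reindexed sum $\sum_{i\ge t_0+1}(1-\kappa_{i})$ using $1-\kappa_{t_0+1+m}\le (1-\kappa_{t_0+1})(1-\kappa_m)$; matching the exact constant $1-\kappa_{t_0+1}$ in the statement is a bookkeeping matter of deciding whether the $i=t_0+1$ term contributes $(1-\kappa_{t_0})$ or $(1-\kappa_{t_0+1})$, and I would align the summation bounds accordingly.

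The main obstacle I anticipate is purely this indexing/constant alignment: one must be careful whether the relevant Wasserstein contraction after $i-1$ steps from the first relevant term pulls out $1-\kappa_{t_0}$ or $1-\kappa_{t_0+1}$, and whether the residual sum telescopes exactly to $\kappa_{\Sigma}^c=\sum_{k\ge 0}(1-\kappa_k)$. There is no analytic difficulty — the only inputs are the Wasserstein contraction along $P^j$, the Kantorovich duality for Lipschitz $f$, and the submultiplicativity \eqref{kappakleq} — but getting the stated factor $(1-\kappa_{t_0+1})\kappa_{\Sigma}^c$ exactly right, rather than $(1-\kappa_{t_0})\kappa_{\Sigma}^c$, requires pinning down the convention for when the burn-in discarding begins and reindexing the geometric factorisation consistently.
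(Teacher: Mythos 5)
Your proposal is correct and is essentially the paper's own proof: bound the bias termwise via Kantorovich--Rubinstein duality, apply the Wasserstein contraction $W_1(q\mtx{P}^{j},\pi)\le (1-\kappa_j)W_1(q,\pi)$, and use the submultiplicativity \eqref{kappakleq} to factor out the leading term and bound the residual sum by $\kappa_{\Sigma}^c$. The indexing issue you flag is genuine but stems from the paper itself: its proof sums over $j$ from $t_0+1$ to $N$ (effectively treating $X_i\sim q\mtx{P}^{i}$), which is exactly what produces the stated factor $1-\kappa_{t_0+1}$ rather than the $1-\kappa_{t_0}$ your convention ($X_i\sim q\mtx{P}^{i-1}$) yields.
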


The bound on the bias depends on the term $W_1(q,\pi)$, the Wasserstein distance of $q$ and $\pi$. Let $\delta_x$ be the measure concentrated on the point $x$. Then the Wasserstein distance of $\delta_x$ and $\pi$ is called the \emph{eccentricity} of $x$ (see \cite{Ollivier2}, \cite{Ollivier3}), and denoted by $E(x):=\int_{y\in \Omega}d(x,y)\mathrm{d}\pi(y)$. Now by definition, we have
\[W_1(q,\pi)\le \int_{y\in \Omega} E(y)\mathrm{d}q(y).\]
Moreover, it follows from \cite{Ollivier2}, and Proposition \ref{L1AverageBonnetMyers}, that
\begin{equation}
\label{ExboundL1BonnetMyers}\E(x)\le \left\{
\begin{array}{l}\mathrm{diam} (\Omega) \\E(x_0)+d(x,x_0) \text{ for every }x_0\in \Omega\\ 
J_k(x)/\kappa_k\text{ for any }k \ge 1.\end{array}\right.
\end{equation}
Our next result is a variance bound (generalising Theorems 2 and 3 of \cite{Ollivier3}).
\begin{thm}[Variance bound for MCMC empirical averages]\label{MCMCVarRiccithm}
Let 
\begin{equation}\label{Zdefeq}Z:=\left(\sum_{i=t_0+1}^{N}f(X_i)\right)/(N-t_0),\end{equation}
and let $K$ be a positive integer such that $\kappa_K>0$. Suppose that $\sigma^2(x)/n(x)\le S(x)$ for some $S:\Omega\to \R$ for every $x\in \Omega$, then
\begin{align*}
&
\Var_{q}(Z)\le \|f\|_{\Lip}^2\cdot \frac{\sum_{i=0}^{K-1}(1-\kappa_i)^2}{N-t_0}\cdot \frac{K}{\kappa_K^2}\cdot \E_{\pi}(S) \left(1+ \II[t_0>0]\cdot \frac{KM^2}{\kappa_K(N-t_0)}\right)\\
&\quad\nonumber+\|f\|_{\Lip}^2\|S\|_{\Lip}W_{1}(q,\pi)M^3 \cdot \frac{2 K^2}{\kappa_K}\cdot (1-\kappa_K)^{t_0/K - 3}.
\end{align*}
\end{thm}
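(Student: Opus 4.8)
The plan is to control the variance of the unnormalised sum $S:=\sum_{i=t_0+1}^{N}f(X_i)$ by a Doob martingale decomposition and then divide by $(N-t_0)^2$. Put $g:=f-\E_{\pi}(f)$, so that $\|g\|_{\Lip}=\|f\|_{\Lip}$ and $\Var_q(S)$ is unchanged, and let $\F_k:=\sigma(X_1,\dots,X_k)$. Setting $D_k:=\E_q[S\mid\F_k]-\E_q[S\mid\F_{k-1}]$, the increments are orthogonal in $L_2(\PP_q)$ and sum to $S-\E_q(S)$, so $\Var_q(S)=\sum_{k=1}^{N}\E_q[D_k^2]$ with no appeal to stationarity. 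A short computation using the Markov property shows $D_k=G_k(X_k)-\E[G_k(X_k)\mid X_{k-1}]$, where $G_k:=\sum_{i=\max(t_0+1,k)}^{N}\mtx{P}^{\,i-k}g$ collects the future contributions of the chain seen from time $k$; hence $\E_q[D_k^2]=\E_q\big[\Var_{P_{X_{k-1}}}(G_k)\big]$.

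First I would control each term pointwise. From the definition of $\kappa_j$ one has the Lipschitz contraction $\|\mtx{P}^{\,j}g\|_{\Lip}\le(1-\kappa_j)\|f\|_{\Lip}$ (the dual form of Proposition 20 of \cite{Ollivier2}), so $\|G_k\|_{\Lip}\le\|f\|_{\Lip}\sum_{j\ge(t_0+1-k)_+}(1-\kappa_j)$, which is at most $\|f\|_{\Lip}\,\kappa_{\Sigma}^c$ for the ``active'' indices $k>t_0$. The definition of the local dimension gives the local variance bound $\Var_{P_x}(G_k)\le\|G_k\|_{\Lip}^2\,\sigma^2(x)/n(x)\le\|G_k\|_{\Lip}^2\,S(x)$, whence
\[\Var_q(S)\le\sum_{k=1}^{N}\|G_k\|_{\Lip}^2\,\E_q\big[S(X_{k-1})\big].\]

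Next I would remove the non-stationarity of $X_{k-1}\sim q\mtx{P}^{k-1}$. Since $S$ is Lipschitz, Kantorovich duality together with the contraction $W_1(q\mtx{P}^{\,i},\pi)\le(1-\kappa_i)W_1(q,\pi)$ yields $\E_q[S(X_{k-1})]\le\E_{\pi}(S)+\|S\|_{\Lip}(1-\kappa_{k-1})W_1(q,\pi)$. Substituting this splits the bound into a stationary part carrying $\E_{\pi}(S)$ and a correction carrying $\|S\|_{\Lip}W_1(q,\pi)$. For the stationary part I separate the burn-in indices $k\le t_0$ from the active indices $k>t_0$: the active ones contribute at most $(N-t_0)\|f\|_{\Lip}^2(\kappa_{\Sigma}^c)^2\E_{\pi}(S)$, and dividing by $(N-t_0)^2$ and invoking \eqref{eqkappasigmacbound} together with Cauchy--Schwarz in the form $(\kappa_{\Sigma}^c)^2\le K\big(\sum_{i=0}^{K-1}(1-\kappa_i)^2\big)/\kappa_K^2$ produces the leading term; the burn-in indices carry an extra factor $(1-\kappa_K)^{\lfloor(t_0+1-k)/K\rfloor}$ in $\|G_k\|_{\Lip}$ and, once summed, give exactly the lower-order correction $\II[t_0>0]\cdot KM^2/(\kappa_K(N-t_0))$.

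Finally, the correction term is assembled by inserting $\|G_k\|_{\Lip}\le\|f\|_{\Lip}\kappa_{\Sigma}^c$ and summing the weights $(1-\kappa_{k-1})\le M(1-\kappa_K)^{\lfloor(k-1)/K\rfloor}$ via the block-of-$K$ grouping behind \eqref{kappakleq2}; this geometric sum is dominated by the indices $k\approx t_0$ and collapses to a constant multiple of $(1-\kappa_K)^{t_0/K}$, which after converting $\kappa_{\Sigma}^c$ into $K$, $M$ and $\kappa_K$ gives the stated $W_1(q,\pi)$ term. I expect the main obstacle to be exactly this bookkeeping rather than any single inequality: one must pass repeatedly between $\kappa_{\Sigma}^c$ and the quantities $K,M,\kappa_K$ through \eqref{kappakleq}--\eqref{eqkappasigmacbound}, keep the burn-in and non-stationarity corrections cleanly separated, and absorb the $\lfloor\cdot\rfloor$ rounding into the exponents (which is where the harmless shift to $t_0/K-3$ and the explicit constants $2K^2/\kappa_K$ and $M^3$ originate), all while not discarding the sharp factor $\sum_{i=0}^{K-1}(1-\kappa_i)^2$ when the geometric sums are collapsed.
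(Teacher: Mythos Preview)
Your approach is correct and genuinely different from the paper's. The paper does not use a single Doob decomposition of $S$ along the one–step chain; instead it first splits $Z=\sum_{r=1}^{K}Z_r$ into $K$ sub–averages along the $K$–step subchains, invokes $\Var_q(Z)\le K\sum_r\Var_q(Z_r)$, and then controls each $\Var_q(Z_r)$ by an iterated telescoping along the $K$–step chain (via backward–defined functions $F^{(r)}_{x_1,\dots,x_{i-1}}$, whose Lipschitz constants are at most $\|f\|_{\Lip}/(\kappa_K(N-t_0))$), applying at each ``big'' step the one–step variance lemma $\mtx{P}^{\NN}(\varphi^2)-(\mtx{P}^{\NN}\varphi)^2\le\|\varphi\|_{\Lip}^2\sum_{k=0}^{\NN-1}(1-\kappa_k)^2\,\mtx{P}^{\NN-1-k}(\sigma^2/n)$ with $\NN=K$. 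In that route the factor $\sum_{i=0}^{K-1}(1-\kappa_i)^2$ appears directly from this lemma, the $1/\kappa_K^2$ comes from the Lipschitz constants of the $F^{(r)}$, and the outer factor $K$ comes from the Cauchy--Schwarz loss $\Var(\sum)\le K\sum\Var$. Your direct martingale avoids the blocking and produces $\sum_k\|G_k\|_{\Lip}^2\E_q[S(X_{k-1})]$ in one line; the price is that the sharp factor is not immediate and must be recovered via $(\kappa_\Sigma^c)^2\le K\sum_{i=0}^{K-1}(1-\kappa_i)^2/\kappa_K^2$, exactly as you do. The two Cauchy--Schwarz losses (yours on $\kappa_\Sigma^c$, the paper's on $\Var(\sum Z_r)$) play the same role, so the leading constants agree. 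Your treatment of the burn–in and $W_1(q,\pi)$ corrections also parallels the paper's: the paper handles the last (burn–in) integral by applying the same variance lemma once more with $\NN=t_0+r$, then replaces each $\E_{q\mtx{P}^j}(S)$ by $\E_\pi(S)+(1-\kappa_j)\|S\|_{\Lip}W_1(q,\pi)$ and bounds the resulting block sums using \eqref{kappakleq2} just as you propose.
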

\begin{remark}
In the stationary case, we can set $t_0=0$, so the bound becomes
\[\Var_{\pi}(Z)\le \|f\|_{\Lip}^2\cdot \frac{\sum_{i=0}^{K-1}(1-\kappa_i)^2}{N}\cdot \frac{K}{\kappa_K^2}\cdot \E_{\pi}\left(\frac{\sigma^2}{n}\right).\]
\end{remark}

The following results prove concentration inequalities for the empirical averages (generalising Theorems 4 and 5 of \cite{Ollivier3}).
\begin{thm}[Concentration bound for MCMC empirical averages]\label{MCMCconcRiccithm}
Let $X_1,X_2,\ldots$ be a Markov chain with initial distribution $q$ ($X_1\sim q$).
Let $Z$ be as in \eqref{Zdefeq}, and let $K$ be a positive integer such that $\kappa_K>0$. Let $S_{\max}:=\sup_{x\in \Omega}\frac{\sigma^2(x)}{n(x)}$.
Let $\lambda_{\max}:=\frac{\kappa_K(N-t_0)}{12M K\sigma_{\infty}\|f\|_{\Lip}}$,
\begin{align*}
t_{\max}&:= \econst^{1/6} \|f\|_{\Lip} \frac{(N-t_0+K-1)+\II[t_0>0] K/\kappa_K}{(N-t_0)}\cdot \frac{\sum_{i=0}^{K-1}(1-\kappa_i)^2}{12M \sigma_{\infty}\kappa_K},\text{ and }\\
D_{\max}&:=2\econst^{1/6} \|f\|_{\Lip}^2 S_{\max} \frac{(N-t_0+K-1)+\II[t_0>0] K/\kappa_K}{(N-t_0)^2}\cdot \frac{K}{\kappa_K^2}\sum_{i=0}^{K-1}(1-\kappa_i)^2.
\end{align*}
Then for $0\le t\le t_{\max}$, we have the Gaussian bound
\begin{equation}\label{nonrevgaussbound}
\PP_{q}(|Z-\E_{\pi}(f)|\ge t)\le 2\exp\left(-\frac{t^2}{D_{\max}}\right),
\end{equation}
while for $t> t_{\max}$, we have the exponential bound
\begin{equation}\label{nonrevexpbound}
\PP_{q}(|Z-\E_{\pi}(f)|\ge t)\le 2\exp\left(-\frac{t_{\max}^2}{D_{\max}}-\lambda_{\max}(t-t_{\max})\right).
\end{equation}
\end{thm}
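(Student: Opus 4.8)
The plan is to run a Doob-martingale argument on the un-normalised empirical sum $H := \sum_{i=t_0+1}^{N}f(X_i)$ and to control the Laplace transform of $H$ by chaining conditional moment generating function (m.g.f.) bounds, paralleling the second-moment computation behind Theorem \ref{MCMCVarRiccithm} but with variances replaced by exponential moments. Fixing the filtration $\mathcal{F}_j := \sigma(X_1,\dots,X_j)$, I write $H - \E_q(H) = \sum_{j=1}^{N}\Delta_j$ with increments $\Delta_j = \phi_j(X_j) - (\mtx{P}\phi_j)(X_{j-1})$, where the \emph{tail function}
\[
\phi_j(x) := f(x)\,\II[j\ge t_0+1] + \sum_{i=\max(j+1,\,t_0+1)}^{N}(\mtx{P}^{\,i-j}f)(x)
\]
collects all contributions of the future of the chain as seen from time $j$ (a short check via the Markov property shows $\E_q(H\mid\mathcal{F}_j) = \sum_{i=t_0+1}^{j-1}f(X_i) + \phi_j(X_j)$, so the $\Delta_j$ are genuine martingale increments). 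The dual form of Definition \ref{defmultistepRicci}, namely $\|\mtx{P}^m f\|_{\Lip}\le (1-\kappa_m)\|f\|_{\Lip}$, then yields the key Lipschitz bound $\|\phi_j\|_{\Lip}\le \|f\|_{\Lip}\sum_{m=0}^{N-j}(1-\kappa_m)\le \|f\|_{\Lip}\,\kappa_{\Sigma}^c$.

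Next I would establish a local one-step m.g.f. estimate for each increment. Conditionally on $X_{j-1}=x$, the variable $\Delta_j$ is the $P_x$-centred value of the $\|\phi_j\|_{\Lip}$-Lipschitz function $\phi_j$, so its conditional variance is at most $\|\phi_j\|_{\Lip}^2\,\sigma^2(x)/n(x)$ by the definition of the local dimension $n(x)$ in Definition \ref{manydefs}, while its range is controlled by the granularity, $|\Delta_j|\le 2\|\phi_j\|_{\Lip}\sigma_{\infty}$. A Bernstein-type lemma for a centred, bounded variable then gives, for $\lambda$ in a window of the form $|\lambda|\le c/(\|\phi_j\|_{\Lip}\sigma_{\infty})$,
\[
\E\!\left(\econst^{\lambda \Delta_j}\mid \mathcal{F}_{j-1}\right)\le \exp\!\left(\econst^{1/6}\,\lambda^2 \|\phi_j\|_{\Lip}^2\,\frac{\sigma^2(X_{j-1})}{n(X_{j-1})}\right),
\]
which is exactly where the constant $\econst^{1/6}$ and the granularity thresholds in $\lambda_{\max}$ and $t_{\max}$ originate. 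Applying the tower property step by step over $j=N,N-1,\dots,1$ and bounding $\sigma^2/n\le S_{\max}$ collapses the chain of conditional bounds into a single sub-Gaussian estimate for $Z-\E_q(Z) = \tfrac{1}{N-t_0}\sum_j\Delta_j$, with variance proxy $\tfrac{\econst^{1/6}S_{\max}}{(N-t_0)^2}\sum_j\|\phi_j\|_{\Lip}^2$, valid on a $\lambda$-window that, after rescaling from $H$ to $Z$, becomes $|\lambda|\le \lambda_{\max}$; here the threshold $\lambda_{\max}=\kappa_K(N-t_0)/(12MK\sigma_{\infty}\|f\|_{\Lip})$ enters through $\max_j\|\phi_j\|_{\Lip}\le \|f\|_{\Lip}\kappa_{\Sigma}^c\le \|f\|_{\Lip}KM/\kappa_K$.

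It then remains to evaluate $\sum_j\|\phi_j\|_{\Lip}^2$. Writing $l=N-j$ and using $\sum_{m=0}^{l}(1-\kappa_m)\le \kappa_{\Sigma}^c$ already bounds each term, but the sharper form appearing in $D_{\max}$ and $t_{\max}$ comes from combining \eqref{eqkappasigmacbound}, i.e. $\kappa_{\Sigma}^c\le \big(\sum_{i=0}^{K-1}(1-\kappa_i)\big)/\kappa_K$, with Cauchy--Schwarz, $\big(\sum_{i=0}^{K-1}(1-\kappa_i)\big)^2\le K\sum_{i=0}^{K-1}(1-\kappa_i)^2$, producing the factor $\tfrac{K}{\kappa_K^2}\sum_{i=0}^{K-1}(1-\kappa_i)^2$. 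Counting the increments contributes the $(N-t_0+K-1)$ term, while the extra $\II[t_0>0]\,K/\kappa_K$ correction records the additional fluctuation accrued during the burn-in steps $j\le t_0$, where the tail sums begin at a positive index and hence decay geometrically through \eqref{kappakleq2}. Substituting the optimal $\lambda=(N-t_0)t/D_{\max}$ into the Chernoff bound $\PP_q(Z-\E_q Z\ge t)\le \exp(-\lambda(N-t_0)t+\cdots)$ yields the Gaussian estimate $\exp(-t^2/D_{\max})$ as long as this $\lambda$ stays below $\lambda_{\max}$, which is precisely the regime $t\le t_{\max}$; for $t>t_{\max}$ one freezes $\lambda=\lambda_{\max}$ and reads off the exponential tail.

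Finally, to re-centre at $\E_{\pi}(f)$ rather than $\E_q(Z)$, I would absorb the bias $|\E_q(Z)-\E_{\pi}(f)|$ using Proposition \ref{biasboundprop}. I anticipate that the main obstacle is not the martingale machinery, which is the standard Azuma--Bernstein scheme, but the careful non-stationary bookkeeping: propagating the initial law $q$ through the conditional m.g.f. chain, isolating the burn-in correction $\II[t_0>0]K/\kappa_K$ cleanly, and matching the Bernstein constants ($\econst^{1/6}$, and the $12$ in $\lambda_{\max}$) exactly as they appear in $\lambda_{\max}$, $t_{\max}$ and $D_{\max}$.
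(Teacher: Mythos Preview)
Your martingale approach is sound and will reach a bound of the same order, but it is genuinely different from the paper's proof. The paper does \emph{not} run a single-step Doob decomposition. Instead it splits $Z=\sum_{r=1}^{K}Z_r$ into $K$ thinned sub-sums (each $Z_r$ collects the terms at times $t_0+r,\,t_0+r+K,\ldots$), combines them via the convexity inequality $e^{\lambda(Z-\E_q Z)}\le \tfrac{1}{K}\sum_r e^{K\lambda(Z_r-\E_q Z_r)}$, and then, for each $Z_r$, runs a backward recursion with the functions $F^{(r)}_{x_1,\ldots,x_{i-1}}$ exactly as in the variance proof of Theorem \ref{MCMCVarRiccithm}, applying Lemma \ref{nonrevlemma} $K$ times per macro-step. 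A rescaling by $\hat C=\tfrac{1}{4M}\cdot\tfrac{\kappa_K(N-t_0)}{\|f\|_{\Lip}}$ forces all the intermediate functions to be $\tfrac{1}{4M}$-Lipschitz, so that $g(\alpha)\le \tfrac{e^{1/6}}{2}\alpha^2$ applies uniformly; this is where both the $e^{1/6}$ and the factor $12MK$ in $\lambda_{\max}$ arise (the extra $K$ coming from the Jensen step, not from $\kappa_\Sigma^c$). The term $\sum_{i=0}^{K-1}(1-\kappa_i)^2$ then appears \emph{directly} as the sum of the per-step contributions inside one $K$-block, without Cauchy--Schwarz.

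By contrast, your route controls each increment $\Delta_j$ individually via a Bernstein bound, and the natural variance proxy is $\sum_j\|\phi_j\|_{\Lip}^2\lesssim (N-t_0)\,(\kappa_\Sigma^c)^2\|f\|_{\Lip}^2$; you then invoke Cauchy--Schwarz on $\big(\sum_{i=0}^{K-1}(1-\kappa_i)\big)^2\le K\sum_{i=0}^{K-1}(1-\kappa_i)^2$ purely to match the stated form of $D_{\max}$. This is legitimate but is a loosening, so your argument in fact yields a slightly \emph{sharper} bound before that step (main term $(N-t_0)$ rather than $(N-t_0+K-1)$). The trade-off is that the paper's block decomposition mirrors the variance proof exactly and makes the burn-in correction $\II[t_0>0]K/\kappa_K$ drop out transparently from a single application of \eqref{nonrevlemmageneq} with $\NN=t_0+r$, whereas you must track the geometric decay of the tail sums for $j\le t_0$ by hand. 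Your observation about re-centring at $\E_\pi(f)$ via Proposition \ref{biasboundprop} is well taken; the paper's proof in fact only establishes the m.g.f.\ bound centred at $\E_q(Z)$ and applies Lemma \ref{tmaxlambdamaxlemma} from there.
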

\begin{thm}[Second concentration bound for MCMC empirical averages]\label{MCMCconcRiccithm2}
Alternatively, suppose that $\sigma^2(x)/n(x)\le S(x)$ for some function $S:\Omega\to \R$ (for every $x\in \Omega$). Denote 
\begin{align*}
\lambda_{\max}'&:=\frac{1}{4KM}\cdot \frac{\kappa_K(N-t_0)}{\|f\|_{\Lip}}\cdot \min(1/(3\sigma_{\infty}), \kappa_K/(4KM^2\|S\|_{\Lip})), \\
t_{\max}'&:=\min(1/(3\sigma_{\infty}), \kappa_K/(4KM^2\|S\|_{\Lip})) \cdot \frac{8M\|f\|_{\Lip} K\E_{\pi}(S)}{\kappa_K},\text{ and}\\
D&:=\frac{64M^2\|f\|_{\Lip}^2 K^2}{\kappa_K^2(N-t_0)} 
\cdot \Bigg[\E_{\pi}(S)\left(1+\II[t_0>0]\cdot \frac{2K}{(N-t_0)\kappa_K}\right)\\
&+M\|S\|_{\Lip} W_1(q,\pi) \cdot \frac{3K}{(N-t_0)\kappa_K}\cdot\left(1-\kappa_K/2\right)^{\lfloor t_0/K\rfloor-3}\Bigg].
\end{align*}
Then for $0\le t\le  t_{\max}'$, we have the Gaussian bound
\begin{equation}\label{nonrevgaussbound}
\PP_{q}(|Z-\E_{\pi}(f)|\ge t)\le 2\exp\left(-\frac{t^2}{D}\right),
\end{equation}
while for $t> t_{\max}'$, we have the exponential bound
\begin{equation}\label{nonrevexpbound}
\PP_{q}(|Z-\E_{\pi}(f)|\ge t)\le 2\exp\left(-\frac{t_{\max}^2}{D}-\lambda_{\max}'(t-t_{\max})\right).
\end{equation}
\end{thm}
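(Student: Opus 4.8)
The plan is to bound the upper tail of $Z-\E_\pi(f)$ by first peeling off the bias and then controlling the exponential moment of the centred average $Z-\E_q(Z)$ via a Chernoff argument. First I would apply Proposition \ref{biasboundprop} to estimate $|\E_q(Z)-\E_\pi(f)|$, which is of order $(N-t_0)^{-1}W_1(q,\pi)$ and can be absorbed into the stated constants; this reduces the task to producing, for a dual parameter $\theta\in[0,\lambda_{\max}']$, a sub-Gaussian-type bound $\log\E_q\big(e^{\theta(Z-\E_q(Z))}\big)\le \theta^2 D/4$ valid in the Gaussian window, together with an exponential-tail control once $\theta$ is capped at $\lambda_{\max}'$. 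Optimising $\exp(-\theta t+\theta^2 D/4)$ then yields the Gaussian bound $\exp(-t^2/D)$ for the range of $t$ in which the unconstrained optimum $\theta=2t/D$ respects the cap, i.e.\ for $t\le t_{\max}'=D\lambda_{\max}'/2$, and the boundary choice $\theta=\lambda_{\max}'$ gives the exponential tail for $t>t_{\max}'$.

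Next I would set $S_N:=\sum_{i=t_0+1}^N f(X_i)$ and decompose $S_N-\E_q(S_N)=\sum_{i=1}^N D_i$ along the forward filtration $\F_i=\sigma(X_1,\dots,X_i)$, writing each increment as $D_i=\psi_i(X_i)-(\mtx{P}\psi_i)(X_{i-1})$ with $\psi_i:=\sum_m \mtx{P}^m f$ summed over the shifts $m$ that keep the index inside the averaging window $\{t_0+1,\dots,N\}$. The Markov property makes $(D_i)$ a martingale-difference sequence, and the structural input is that $\mtx{P}^m f$ is $(1-\kappa_m)\|f\|_{\Lip}$-Lipschitz, whence $\|\psi_i\|_{\Lip}\le\|f\|_{\Lip}\sum_{m\ge0}(1-\kappa_m)$. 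Applying \eqref{kappakleq2} and \eqref{eqkappasigmacbound} bounds this geometric sum by $\|f\|_{\Lip}MK/\kappa_K$, which is the origin of the $MK/\kappa_K$ factor in $\lambda_{\max}'$ and of the $M^2K^2/\kappa_K^2$ factor in $D$; for burn-in indices $i\le t_0$ the sum starts at $m\ge t_0+1-i$, so the same estimate gains an extra geometric factor decaying like $(1-\kappa_K)^{\lfloor t_0/K\rfloor}$, producing the $\II[t_0>0]$ corrections and the $(1-\kappa_K/2)^{\lfloor t_0/K\rfloor-3}$ term.

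The analytic core is a per-step local Laplace estimate: conditionally on $\F_{i-1}$, the difference $D_i$ is the centred value of the $\|\psi_i\|_{\Lip}$-Lipschitz function $\psi_i$ under the jump measure $P_{X_{i-1}}$, so the same local exponential-moment bounds underlying Theorem \ref{nonrevconcthm2} give, with a universal constant, $\log\E\big(e^{\lambda D_i}\mid\F_{i-1}\big)\le C\tfrac{\lambda^2}{2}\|\psi_i\|_{\Lip}^2\,S(X_{i-1})$ in the Gaussian regime $\lambda\|\psi_i\|_{\Lip}\le 1/(3\sigma_\infty)$, using $\sigma^2(x)/n(x)\le S(x)$. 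Setting $\lambda=\theta/(N-t_0)$ and iterating through the tower property, the total exponent is of order $\tfrac{\theta^2}{(N-t_0)^2}\sum_i\|\psi_i\|_{\Lip}^2 S(X_{i-1})$; summing the Lipschitz coefficients over the window gives the $K^2/\kappa_K^2$ factor, and the granularity threshold $\lambda\|\psi_i\|_{\Lip}\le1/(3\sigma_\infty)$ is exactly the first entry $1/(3\sigma_\infty)$ of the $\min$ defining $\lambda_{\max}'$. In the Gaussian regime this recovers, up to constants, the variance proxy of Theorem \ref{MCMCVarRiccithm}, a useful consistency check.

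The hard part will be handling the \emph{random} variance proxy $S(X_{i-1})$ inside the iterated expectation: since $S$ is not constant, the conditional factor $e^{C\lambda^2\|\psi_i\|_{\Lip}^2 S(X_{i-1})/2}$ cannot be pulled out as a scalar, and one must control the excess $S(X_{i-1})-\E_\pi(S)$ at each stage. I would treat this excess as a Lipschitz perturbation that augments the effective functional carried into the next conditioning step, absorbing it at the cost of restricting $\lambda$ so that the $\|S\|_{\Lip}$ contribution stays subordinate to the principal quadratic term; this restriction is precisely the second entry $\kappa_K/(4KM^2\|S\|_{\Lip})$ of the $\min$ in $\lambda_{\max}'$. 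The remaining dependence on the non-stationary start is then controlled by bounding $\E_q(S(X_i))-\E_\pi(S)\le\|S\|_{\Lip}\,W_1(q\mtx{P}^{i-1},\pi)$ and using the contraction \eqref{eqW1Pmumup}, whose geometric sum over $i$ yields the $\|S\|_{\Lip}W_1(q,\pi)$ term in $D$. Collecting these estimates into $\theta^2D/4$ and feeding the result into the Chernoff optimisation of the first paragraph, with the bias reinstated, completes the proof.
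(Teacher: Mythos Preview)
Your approach is plausible and identifies the right analytic ingredients (Lemma~\ref{nonrevlemma} for the per-step Laplace bound, the Lipschitz-perturbation treatment of the random $S(X_{i-1})$, and the $\|S\|_{\Lip}$-driven cap on $\lambda$), but it takes a genuinely different route from the paper's proof.

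The paper does \emph{not} use a martingale-difference decomposition along the full chain. Instead it splits $Z=\sum_{r=1}^{K}Z_r$ along residue classes modulo $K$, applies Jensen's inequality to reduce to bounding $\E_q(e^{\lambda Z_r})$ for each sub-average, and then treats each $Z_r$ as an average over the $K$-step skeleton chain (which has curvature $\kappa_K>0$) via the backward-induction functions $F^{(r)}_{x_1,\ldots,x_{i-1}}$ of Joulin--Ollivier. The accumulated $S$-term is carried explicitly as $\sum_k \mtx{P}^k(S)$ inside the exponent, and one checks at each stage that $\|\hat{F}^{(r)}_{x_1,\ldots,x_{i-1}}+\lambda\sum_k\mtx{P}^k(S)\|_{\Lip}\le 1/(2M)$; this closure condition is precisely what forces the second entry of the $\min$ in $\lambda_{\max}'$. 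The burn-in is then handled by one final application of the iterated estimate \eqref{nonrevlemmageneq2} over $t_0+r$ steps.

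Your forward martingale scheme should work as well, and it has the advantage of avoiding the Jensen step (which costs a factor in the constants). The trade-off is that your effective Lipschitz coefficients $\|\psi_i\|_{\Lip}\sim\|f\|_{\Lip}MK/\kappa_K$ are not small, so Lemma~\ref{nonrevlemma} cannot be applied directly with $\varphi=\psi_i$ (since $g(\alpha)=e^{(2/3)\alpha}\alpha^2/2$ blows up); you must first rescale so that the Lipschitz constant is $O(1)$, which is where the $1/(4KM)\cdot\kappa_K(N-t_0)/\|f\|_{\Lip}$ prefactor in $\lambda_{\max}'$ enters. You state the correct threshold $\lambda\|\psi_i\|_{\Lip}\le 1/(3\sigma_\infty)$ but should make the rescaling explicit. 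The paper's $K$-skeleton trick sidesteps this by building the rescaling into $\hat{C}$ from the outset.

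One small point: you are right to peel off the bias via Proposition~\ref{biasboundprop}; the paper's proof actually produces a moment-generating-function bound centred at $\E_q(Z)$ and then invokes Lemma~\ref{tmaxlambdamaxlemma}, leaving the passage to $\E_\pi(f)$ implicit.
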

\begin{remark}
For stationary chains, we can set $t_0=0$, and $W_1(q,\pi)=0$, so the terms including them can be omitted.
\end{remark}
Now we briefly compare these results to some of the variance bounds and Bernstein inequalities stated in the literature. \cite{RudolfLazy}, \cite{rudolf2012explicit}, and \cite{novak2014computation} obtain explicit bounds on variance and the burn-in time $t_0$ under various moment conditions on the function $f$ and the density ratio $\frac{\mathrm{d}\mu}{\mathrm{d}\pi}$ between the initial and the stationary distributions. These bounds are applicable for uniformly ergodic, as well as geometrically ergodic reversible chains, with explicit examples are given for random walks on convex subsets of $\R^d$, and log-concave densities. 
The coarse Ricci curvature approach has stronger assumptions ($\kappa_k>0$ and $f$ has to be Lipschitz), but the bounds essentially only depend on the parameters $\kappa_k$ and $\|f\|_{\Lip}$, whereas in Rudolf's approach one also needs to bound or estimate various moments of $f$.
Moreover, the coarse Ricci curvature approach can also show exponential concentration bounds.

Variance and concentration bounds were also shown in Section 3 of \cite{Martoncoupling}, based on the spectral gap. If the function $f$ is Lipschitz, and its range is quite large, then the range of the Gaussian tails ($t_{\max}$) can be much larger than what we would get from the Bernstein-type inequalities of \cite{Martoncoupling}. Moreover, our results only use the Lipschitz coefficient of $f$, and the coarse Ricci curvature, which can be bounded theoretically. On the other hand, the Bernstein-type bounds also require the variance, and asymptotic variance of $f$, which are usually difficult to compute theoretically, and need to be estimated numerically (see Section 3 of \cite{nonasymptotic} for more details). The main disadvantage of the Ricci curvature approach is that it is very sensitive to the Lipschitz coefficient of $f$, and may be less precise for non-smooth functions than Bernstein-type inequalities.

\section{Applications}\label{SecApplications}
In this section, we present some applications of our results.
Firstly, in Section \ref{secsplitmergewalk}, we use the multi-step Ricci curvature (in particular, Proposition \ref{kappagammaprop} and Theorem \ref{revconcthm}) to prove spectral bounds for the transposition walk on the symmetric group, and get concentration inequalities for Lipschitz functions of uniform permutations.
In Section \ref{SecStatfiz} we apply our theorems to Markov chains related to statistical
physical models. First, in Section \ref{SectionBoundsusingDobrushin}, we show how Dobrushin's interdependence matrix is related to the multi-step Ricci curvature, for Glauber dynamics with random scan and systemic scan. In Sections \ref{CurieWeissbound} and \ref{Sec1DIsing}, we apply these bounds to the Curie-Weiss and 1D Ising models, respectively.
Finally, in Section \ref{SecBinaryCube}, we present an application of the recursive lower bound for $\kappa_k$ to a random walk on a binary cube with a forbidden region.

\subsection{Split-merge random walk on partitions}\label{secsplitmergewalk}
The partitions of $N$ are $m$-tuples of positive integers $(a_1,\ldots,a_m)$, such that $a_1\ge a_2\ge \ldots \ge a_m$, $\sum_{i=1}^{m}a_i=N$, and $m\le N$. Let us denote the set of the partitions of $N$ by $\Omega$. The split-merge random walk can be thought as the projection of the transposition random walk on the symmetric group $S^N$ to the partitions of $N$, according to the cycle structure of the permutations. 
The split-merge walk is defined as in Definition 2 of \cite{bormashenko2011coupling}, as follows.

Assume that we are in $(a_1,\ldots,a_m)$. Then in the following step, we may
\begin{enumerate}
\item \emph{Split} -- $a_i$ is replaced by $(r,a_i-r)$, with probability $a_i/n^2$ for every $1\le i\le m, 1\le r\le a_i-1$.
\item \emph{Merge} -- Replace $a_i$ and $a_j$ with $a_i+a_j$, with probability $2a_i a_j/N^2$, for every $1\le i<j\le m$.
\item \emph{Stay} -- stay in place with probability $1/N$.
\end{enumerate}
The stationary distribution $\pi$ of this random walk can be written as 
\[\pi((a_1,\ldots,a_m))=\frac{\text{\# of permutations in }S^N\text{ with cycle structure }(a_1,\ldots,a_m)}{N!}.\]
For $x,y\in \Omega$, we define the distance $d(x,y)$ as the minimal number of splits or merges required to get from $x$ to $y$ (or vice-versa). The following proposition estimates the multi-step Ricci curvature $\kappa_k$ for this random walk on the metric space $(\Omega, d)$. 
\begin{proposition}[Ricci curvature for the split-merge walk on partitions]
For the split-merge walk on partitions of $N$, $\kappa>0$, and thus $\kappa_i> 0$ for any $i\ge 1$. Moreover, there exists $\alpha>0$, $0<\beta<1$ universal constants such that for $k\ge (\alpha+1/2) N$, $\kappa_k\ge \beta$.
\end{proposition}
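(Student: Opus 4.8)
The plan is to combine a path-coupling reduction with a direct coalescence analysis of the associated random transposition walk, of which the split-merge walk is the projection onto cycle types. Since the metric is $1$-geodesic (every geodesic between partitions proceeds by single splits or merges), the geodesic property of Proposition~\ref{geodesicprop} lets me restrict attention to neighbouring partitions, i.e. pairs with $d(x,y)=1$. For such a pair $\kappa_k(x,y)=1-W_1(P^k_x,P^k_y)$, so it suffices to show $\sup_{d(x,y)=1}W_1(P^k_x,P^k_y)\le 1-\beta$.

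For the first claim ($\kappa>0$) I would build an explicit one-step coupling of $P_x$ and $P_y$ when $d(x,y)=1$: apply the same split/merge move to the blocks shared by $x$ and $y$, which keeps the distance equal to $1$, while a single move acting on the discrepant block — merging the two split blocks of one configuration, or performing the matching split of the merged block of the other — drives the distance to $0$. Such a coalescing move has probability of order $1/N^2$, giving $\kappa=\kappa_1\ge c/N^2>0$ for a universal constant $c$. The conclusion $\kappa_i>0$ for all $i\ge 1$ is then immediate from the submultiplicativity bound \eqref{kappakleq}, since $1-\kappa_i\le(1-\kappa)^i<1$.

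For the quantitative second claim I would deliberately \emph{not} route through the mixing-time estimate of Proposition~\ref{mixingRicciprop}: with $d_0=1$ and $\diam(\Omega)=\mathcal{O}(N)$ (from the $L^1$ Bonnet–Myers bound of Proposition~\ref{L1BonnetMyers}) that route forces the precision $\epsilon=\mathcal{O}(1/N)$, which costs an extra logarithmic factor and yields only $k=\Omega(N\log N)$. Instead I would lift two neighbouring configurations to permutations $\sigma$ and $\tau=\sigma\cdot(ij)$ in $S^N$ differing by a single transposition, run the coupled random transposition walk of \cite{bormashenko2011coupling}, and estimate the coalescence time of the cycle types directly. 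The decisive relaxation is that equality of \emph{cycle types}, not of the permutations themselves, already gives coalescence in $\Omega$; and once the transposition dynamics has produced a macroscopic cycle of size $\Theta(N)$, the discrepant block can be absorbed at rate $\Omega(1/N)$ rather than $1/N^2$. Tracking this, I would show that after $(\alpha+\tfrac12)N$ steps the two chains share a cycle type with probability at least $\beta$ while the cycle-type distance never exceeds $1$, whence $W_1(P^k_x,P^k_y)\le\E[d(X_k,Y_k)]\le 1\cdot(1-\beta)=1-\beta$ and $\kappa_k\ge\beta$.

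The hard part will be exactly this coalescence estimate. One must design the coupling so that the cycle-type distance stays in $\{0,1\}$ (or at worst is a supermartingale that does not leak mass to large values) while simultaneously coalescing with constant probability in \emph{linear} time. This requires a careful description of how the discrepant block interacts with the evolving cycle structure, and in particular a quantitative use of the emergence of large cycles after $\Theta(N)$ transpositions — the mechanism that raises the effective coalescence rate from $1/N^2$ to $1/N$ and explains both the $N$-scaling and the appearance of the constant $\tfrac12$ in the threshold $(\alpha+\tfrac12)N$.
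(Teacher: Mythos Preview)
Your proposal is correct and follows the same route as the paper: geodesic reduction to neighbouring pairs, an explicit one-step coupling yielding $\kappa\ge 2/N^2>0$, and then the Bormashenko coupling for the linear-time coalescence estimate. The only difference is presentational --- the paper simply invokes Lemma~17 of \cite{bormashenko2011coupling} for the second claim, whereas you sketch the mechanism that lemma encapsulates (lifting to $S^N$, tracking cycle-type distance in $\{0,1\}$, and exploiting the emergence of macroscopic cycles to boost the coalescence rate from $1/N^2$ to $1/N$).
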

\begin{proof}
First, we are going to show that $\kappa>0$. By Proposition \ref{geodesicprop}, it is sufficient to show that
\begin{equation}\label{permutationkappaeq}
W_1(P_x,P_y)\le (1-\kappa)d(x,y),
\end{equation}
for neighbouring $x$ and $y$, that is, when $d(x,y)=1$. Now it is easy to construct a coupling $(X,Y)$ of  $P_x$ and  $P_y$ such that $d(X,Y)\le 1$, and $\PP(X=Y)=2/N^2$. This means that \eqref{permutationkappaeq} holds with $\kappa=2/N^2$.
The fact that $\kappa_k\ge \beta$ for $k\ge (\alpha +1/2)N$ follows from Lemma 17 of \cite{bormashenko2011coupling}.
\end{proof}

Now we can apply our results on this example. Firstly, using Proposition \ref{mixingRicciprop}, and the facts that $\diam(\Omega)=N-1$, $d_0=1$, and $1-\kappa_{(\alpha +1/2)N\cdot l}\le (1-\beta)^l$ for $l\in \N$, we have
\[\tmix(\epsilon)\le (\alpha +1/2)N \cdot \frac{\log((N-1)/\epsilon)}{\log(1/(1-\beta))}=\mathcal{O}(N\log(N)).\]
Similarly, using Proposition \ref{kappagammaprop}, we can see that $\gamma^*\ge \frac{\kappa_k}{k}\ge \frac{\beta}{(\alpha +1/2)N}=\mathcal{O}(1/N)$.
These are likely to be of the correct order of magnitude, since similar results hold for the transposition walk on the symmetric group (as shown in \cite{diaconisgeneratingperm}). Such bounds could not have been deduced using original coarse Ricci curvature approach of \cite{Ollivier2}, since $\kappa=\mathcal{O}(1/N^2)$.

Applying Proposition \ref{L1BonnetMyers} shows that $\diam(\Omega)\le 2 (\alpha+1/2)N/\beta$, which is the correct order of magnitude.

Finally, in our concentration bounds for reversible chains (Theorem \ref{Varianceboundthm} and Theorem \ref{revconcthm}), we have $\kappa_{\Sigma}^c\le (\alpha +1/2)N/\beta$, thus for any $f:\Omega\to \R$ that is 1-Lipschitz with respect to $d$,
$\Var_{\pi}(f)\le (\alpha +1/2)N/\beta$ and
\[\PP_{\pi}(|f(X)-\E_{\pi}f|\ge t)\le \exp\left(-\frac{t^2}{(\alpha +1/2)N/\beta}\right).\]
Note that this result also follows from the concentration result for functions of random permutations (see \cite{Maurey}, and \cite{Talagrand1}), since the $d(x,y)$ can be bounded from above by the transposition distance.

It would be interesting to prove similar bounds for the transposition walk on the symmetric group, too. In fact, \cite{bormashenko2011coupling} uses a connection between the two walks to bound the mixing time of the transposition walk on the symmetric group, based on a coupling argument for the split-merge walk on partitions. However, this approach does not seem to be applicable to the multi-step coarse Ricci curvature.

\subsection{Glauber dynamics on statistical physical models}\label{SecStatfiz}
In this section, we are going to estimate the coarse Ricci curvature of the Glauber dynamics (with random, and systemic scan) on statistical physical models. A common property of these models is that we have some random variables (spins) $X_1, X_2,\ldots, X_N$, that are dependent on each other, and the strength of their dependence is influenced by a parameter $\beta$ (inverse temperature).

In the following, in Section \ref{SectionBoundsusingDobrushin}, first we define the Dobrushin interdependence matrix (a way to measure the strength of dependence between the random variables), and then state propositions that estimate $\kappa_k$ in terms of this matrix in the case of Glauber dynamics. In Sections \ref{CurieWeissbound} and \ref{Sec1DIsing}, we apply our results to the Curie-Weiss, and the one dimensional Ising models.

\subsubsection{Bounds using the Dobrushin interdependence matrix}\label{SectionBoundsusingDobrushin}
The following definition originates from \cite{Dobrushin1} and \cite{Dobrushin2}.
\begin{definition}[Dobrushin interdependence matrix]\label{Exgendobcond}
Let $(\Lambda,d_{\Lambda})$ be a Polish metric space (of a single spin). Define
$\Omega:=\Lambda^{N}$, and for $x,y\in \Omega$, define $d(x,y)=\sum_{i=1}^N d_{\Lambda}(x_i,y_i)$, where $x_i$ denotes coordinate $i$ of $x$.

For $x\in \Omega$, denote $x_{-i}:=(x_1,\ldots,x_{i-1},x_{i+1},\ldots,x_N)$. Given a $\Omega$ valued random vector $X=(X_1,\ldots,X_N)$ with distribution $\mu$, we say that a matrix $A:=(a_{ij})_{i,j\le N}$ is its \emph{Dobrushin interdependence matrix} if $a_{ii}=0$ for $i\le N$, and for any $x,y\in \Omega$,
\begin{equation}W_1(\mu_i(\cdot|x_{-i}),\mu_i(\cdot|y_{-i}))\le \sum_{j=1}^n a_{i,j} d_{\Lambda}(x_j,y_j).\end{equation}
Here $\mu_i(\cdot|x_{-i})$ denotes the conditional distribution of the $X_i$ given $X_{-i}=x_{-i}$,
and $W_1$ denotes the Wasserstein distance with respect to the distance $d_{\Lambda}$.
Finally, we say that $\mu$ satisfies the \emph{Dobrushin condition} if $\|A\|_1<1$.
\end{definition}
\begin{remark}A frequently used special case of this is when $d_{\Lambda}(x_i,y_i)=\II[x_i\ne y_i]$, then $W_1(\mu_i(\cdot|x_{-i}),\mu_i(\cdot|y_{-i}))$ corresponds to the total variational distance. For examples using other types of distances, see \cite{Liming}.
\end{remark}

\begin{proposition}[Glauber dynamics with random scan]\label{GlauberDobrushinProp}
Let $(\Omega,d)$, $\mu$ and $X$ and $A$ be as in Definition \ref{Exgendobcond}. Consider the Glauber dynamics Markov chain on $\Omega$ as follows. In each step, we choose a coordinate $I$ uniformly from $[N]$, and then replace $X_I$ with a conditionally independent copy, given $X_{-I}$.  Then for this Markov chain, we have
\begin{equation}\kappa_k\ge 1-\left\|\left(\frac{N-1}{N}\mtx{I}+\frac{1}{N} \mtx{A}\right)^k\right\|_1.\end{equation}
This implies, in particular, that the absolute spectral gap $\gamma^*$ satisfies
\begin{equation}\label{gammaspboundeq}\gamma^*\ge 1-\mathrm{sp}\left(\frac{N-1}{N}\mtx{I}+\frac{1}{N} \mtx{A}\right)\ge \frac{1-\mathrm{sp}(\mtx{A})}{N},
\end{equation}
where $\mathrm{sp}(\mtx{A})$ denotes the spectral radius of $\mtx{A}$.
\end{proposition}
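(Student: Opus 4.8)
The plan is to analyse one step of the random-scan Glauber dynamics through a vector-valued distance and then iterate. For $x,y\in\Omega$ write $\mtx{v}(x,y)\in\R^N$ for the coordinatewise distance vector with entries $v_j(x,y):=d_{\Lambda}(x_j,y_j)$, so that $d(x,y)=\|\mtx{v}(x,y)\|_1$, and abbreviate $\mtx{B}:=\frac{N-1}{N}\mtx{I}+\frac{1}{N}\mtx{A}$, a matrix with nonnegative entries. First I would build a Markovian coupling $(X^{(t)},Y^{(t)})$ of the two chains started from $X^{(0)}=x$, $Y^{(0)}=y$: at each step draw a single coordinate $I$ uniformly from $[N]$ and use it for both chains, leave all other coordinates fixed, and resample coordinate $I$ using an optimal $W_1$-coupling of the conditionals $\mu_I(\cdot\mid X^{(t-1)}_{-I})$ and $\mu_I(\cdot\mid Y^{(t-1)}_{-I})$. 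Conditioned on the current pair $(a,b)$, a fixed coordinate $j$ stays put unless $I=j$, an event of probability $1/N$, in which case its expected distance is bounded by $W_1(\mu_j(\cdot\mid a_{-j}),\mu_j(\cdot\mid b_{-j}))\le\sum_{\ell}a_{j\ell}\,d_{\Lambda}(a_\ell,b_\ell)$ via the Dobrushin bound. Combining the two cases coordinatewise yields the one-step estimate $\E\!\left[\mtx{v}(X^{(t)},Y^{(t)})\mid a,b\right]\le\mtx{B}\,\mtx{v}(a,b)$, read as an inequality between nonnegative vectors.

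Next I would promote this to a $k$-step contraction by induction on the number of steps. Since $\mtx{B}$ has nonnegative entries it is monotone for the coordinatewise order, so conditioning on the configuration after $k-1$ steps, applying the one-step estimate, and using the induction hypothesis gives $\E\!\left[\mtx{v}(X^{(k)},Y^{(k)})\right]\le\mtx{B}^{k}\mtx{v}(x,y)$ componentwise. Taking $\ell_1$-norms, and using that for a nonnegative vector $\mtx{w}$ one has $\|\E\,\mtx{v}\|_1=\E\,d$ and $\|\mtx{B}^{k}\mtx{w}\|_1\le\|\mtx{B}^{k}\|_1\|\mtx{w}\|_1$, I obtain $W_1(P_x^{k},P_y^{k})\le\E\,d(X^{(k)},Y^{(k)})\le\|\mtx{B}^{k}\|_1\,d(x,y)$ for every $x\ne y$, which is exactly the claimed bound $\kappa_k\ge 1-\|\mtx{B}^{k}\|_1$. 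I expect the inductive step to be the main obstacle: it must be checked that the one-step estimate holds for \emph{every} pair of current configurations rather than merely neighbouring ones (this is precisely what the Dobrushin condition provides), and that the optimal couplings of the conditionals can be chosen measurably so that the whole construction is a bona fide Markov coupling; the monotonicity of $\mtx{B}$ is what allows the componentwise inequality to survive repeated composition.

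For the spectral consequences I would feed $1-\kappa_k\le\|\mtx{B}^{k}\|_1$ into Proposition \ref{kappagammaprop}, giving $\gamma^*\ge 1-(1-\kappa_k)^{1/k}\ge 1-\|\mtx{B}^{k}\|_1^{1/k}$ for every $k\ge1$. Letting $k\to\infty$ and invoking Gelfand's formula $\|\mtx{B}^{k}\|_1^{1/k}\to\mathrm{sp}(\mtx{B})$ yields $\gamma^*\ge 1-\mathrm{sp}(\mtx{B})$. Finally, since $\mtx{A}$ is a nonnegative matrix, Perron--Frobenius guarantees that $\mathrm{sp}(\mtx{A})$ is itself a (nonnegative, real) eigenvalue of $\mtx{A}$ dominating the modulus of every other eigenvalue; combined with $\mtx{B}=\frac{N-1}{N}\mtx{I}+\frac{1}{N}\mtx{A}$ and the triangle inequality this gives $\mathrm{sp}(\mtx{B})=\frac{N-1}{N}+\frac{1}{N}\mathrm{sp}(\mtx{A})$, whence $1-\mathrm{sp}(\mtx{B})=\frac{1-\mathrm{sp}(\mtx{A})}{N}$ and \eqref{gammaspboundeq} follows.
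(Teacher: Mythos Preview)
Your proposal is correct and follows essentially the same route as the paper: the paper also builds the synchronous Markovian coupling with shared coordinate choice and optimal coupling of the conditionals, defines the vector $\mtx{l}^k_i:=\E(d_{\Lambda}(X^k_i,Y^k_i))$, shows the one-step componentwise inequality $\mtx{l}^{k+1}\le\mtx{B}\mtx{l}^k$, iterates (using nonnegativity of $\mtx{B}$), and then deduces \eqref{gammaspboundeq} via \eqref{gammastareq1} and Gelfand's formula. Your argument is slightly more explicit about the monotonicity of $\mtx{B}$ and about the Perron--Frobenius step that identifies $\mathrm{sp}(\mtx{B})=\frac{N-1}{N}+\frac{1}{N}\mathrm{sp}(\mtx{A})$ as an equality (the paper only claims the inequality), but there is no substantive difference.
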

\begin{remark}
Notice that $\left\|\left(\frac{N-1}{N}\mtx{I}+\frac{1}{N} \mtx{A}\right)^k\right\|_1$ tends to 0 as $k\to \infty$ if and only if the spectral radius of $A$ is strictly smaller than 1. This follows from the Gelfand's formula, which says that the spectral radius of a matrix $M$ equals $\lim_{k\to \infty}\|M^k\|^{1/k}$, for any induced matrix norm. This is a less restrictive criteria than $\|A\|_1< 1$. In particular, $\|A\|_{\infty}<1$, or $\|A\|_{2}<1$ also suffices.
See \cite{Liming} for a spectral gap bound for Markov processes that is similar to \eqref{gammaspboundeq}.
\end{remark}
\begin{proof}
The proof is similar to the proof of Theorem 4.3 of \cite{Cth}. We start by defining a coupling of $\Omega$ valued random variables $(X^k,Y^k)_{k\in \N}$, satisfying that $X^k\sim P_x^k, Y^k\sim P_y^k$.  First, let $X^0=x$, and $Y^0=y$. Suppose that we have already defined $(X^k,Y^k)_{0\le k\le r}$. Then let $I_r$ be uniformly distributed in $[N]$. Now we define $X^r$ and $Y^r$ as equal to $X^{r-1}$ and $Y^{r-1}$ except in their $I_r$th component. We define $X^r_{I_r}$ and $Y^r_{I_r}$ as the coupling that minimises the Wasserstein distance of the distributions $\mu_{I_r}(\cdot|X^{r-1}_{-I_r}), \mu_{I_r}(\cdot|Y^{r-1}_{-I_r})$ (if the minimising distribution does not exist, then we can make a limiting argument). 
For this coupling, define the vectors $(\mtx{l}^k)_{k\ge 0}$ taking values in $\R^N$ as
$\mtx{l}^k_i:=\E(d_{\Lambda}(X^k_i,Y^k_i))$. Using the definition of the Dobrushin interdependence matrix, we can show that for $k\ge 0$,
\[\mtx{l}^{k+1}\le \left(\frac{N-1}{N}\mtx{I}+\frac{1}{N}\mtx{A}\right)\mtx{l}^{k},\]
where the inequality is meant in each component. From this, we can see that
\[\mtx{l}^{k}\le \left(\frac{N-1}{N}\mtx{I}+\frac{1}{N}\mtx{A}\right)^k\mtx{l}^{0},\]
which implies that
$1-\kappa_k\le \left\|\left(\frac{N-1}{N}\mtx{I}+\frac{1}{N}\mtx{A}\right)^k\right\|_1$. Finally, \eqref{gammaspboundeq} follows from Gelfand's formula, and \eqref{gammastareq1}.
\end{proof}

\begin{proposition}[Glauber dynamics with systemic scan]\label{GlauberSystemicProp}
Let $\Omega$, $\mu$, $X$, and $A$ be as in Definition \ref{Exgendobcond}.
Consider a Markov chain such that in each step, we go through $X_1,\ldots,X_n$ in a row, and 
replace them with a conditionally independent copy given the rest. 
For $1\le i\le N$, define $B_i$ as a matrix equal to the identity matrix, except its $i$th row, which is the same as the $i$th row of $A$. Let $B=B_n\cdot B_{n-1}\cdot \ldots \cdot B_1$.
Then for $k\ge 1$,
\[\kappa_k\ge 1-\|B^k\|_1.\]
\end{proposition}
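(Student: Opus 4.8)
The plan is to mirror the coupling argument used in the proof of Proposition \ref{GlauberDobrushinProp}, but to track the effect of a full deterministic sweep rather than of a single randomly chosen coordinate. First I would build a coupling $(X^k,Y^k)_{k\ge 0}$ with $X^k\sim P^k_x$ and $Y^k\sim P^k_y$, starting from $X^0=x$, $Y^0=y$, where each step of $\mtx{P}$ is one sweep: the coordinates are visited in the fixed order $1,2,\ldots,N$, and when coordinate $i$ is visited, $X_i$ and $Y_i$ are redrawn jointly using the optimal coupling of the conditional laws $\mu_i(\cdot\mid\cdot_{-i})$ evaluated at the \emph{current} states of the two chains. As in the random-scan case, I would follow the vector $\mtx{l}^k\in\R^N$ defined by $\mtx{l}^k_i:=\E(d_{\Lambda}(X^k_i,Y^k_i))$, so that $\|\mtx{l}^0\|_1=d(x,y)$ and, since $d(X^k,Y^k)=\sum_i d_{\Lambda}(X^k_i,Y^k_i)$, the coupling characterisation of the Wasserstein distance gives $W_1(P^k_x,P^k_y)\le \|\mtx{l}^k\|_1$.

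The heart of the argument is to see how $\mtx{l}$ evolves during a single sweep. Consider the substep at which coordinate $i$ is updated. By the Dobrushin bound in Definition \ref{Exgendobcond}, the optimal per-coordinate coupling gives, conditionally on the current configurations,
\[\E\bigl(d_{\Lambda}(X_i,Y_i)\mid \text{current states}\bigr)=W_1\bigl(\mu_i(\cdot\mid X_{-i}),\mu_i(\cdot\mid Y_{-i})\bigr)\le \sum_{j\ne i} a_{ij}\, d_{\Lambda}(X_j,Y_j),\]
while every coordinate $j\ne i$ is left untouched at this substep. Taking full expectations and recalling $a_{ii}=0$, the substep sends the current vector of expected coordinate distances $\mtx{m}$ to $B_i\mtx{m}$, since $B_i$ is the identity off its $i$th row and equals the $i$th row of $A$ on that row. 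Because the scan is sequential, these substeps compose in the order $B_N B_{N-1}\cdots B_1=B$, so one full sweep yields $\mtx{l}^{k+1}\le B\,\mtx{l}^{k}$ componentwise.

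Iterating over $k$ sweeps, and using that $A$ (hence every $B_i$ and $B$) has nonnegative entries, so componentwise inequalities are preserved under multiplication, gives $\mtx{l}^{k}\le B^k \mtx{l}^0$ componentwise. Hence
\[W_1(P^k_x,P^k_y)\le \|\mtx{l}^k\|_1\le \|B^k\,\mtx{l}^0\|_1\le \|B^k\|_1\,\|\mtx{l}^0\|_1=\|B^k\|_1\, d(x,y),\]
and dividing by $d(x,y)$ and taking the infimum over $x\ne y$ in Definition \ref{defmultistepRicci} yields $\kappa_k\ge 1-\|B^k\|_1$.

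The main obstacle is the bookkeeping within a sweep: when coordinate $i$ is updated, the conditioning uses the already-refreshed values on coordinates $j<i$ together with the not-yet-updated values on $j>i$, and one must verify that the Dobrushin inequality still applies to this mixed configuration (it does, since that inequality holds pointwise in the conditioning variables). This mixing is exactly what forces the noncommutative product $B_N\cdots B_1$ and its specific left-to-right order; pinning down that order, and confirming that nonnegativity of $A$ legitimises passing from the componentwise bounds to the induced $1$-norm bound, are the only genuinely delicate points.
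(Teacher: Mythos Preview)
Your proposal is correct and is exactly the argument the paper has in mind: the paper's own proof simply says to mimic the random-scan coupling and verify $\mtx{l}^{k+1}\le B\,\mtx{l}^{k}$, leaving the details to the reader, and you have supplied precisely those details (the substep decomposition via the $B_i$'s, the use of nonnegativity to compose the componentwise inequalities, and the passage to the $\|\cdot\|_1$ bound).
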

\begin{remark}
Similarly to the random scan case, $\|B^k\|_1\to 0$ as $k\to \infty$ if and only if the spectral radius of $B$ is less than 1.
\end{remark}
\begin{remark}
\cite{Dyersystemic} contains an estimation of the mixing time of the systemic scan Glauber dynamics under various forms of the Dobrushin condition. In particular, in Section 7 it is proven that for any $x,y\in \Omega$,
\[\dtv(P_x^k,P_y^k)\le N \|A\|_1^k,\]
implying that 
\[\tmix(\epsilon)\le 1+\frac{\log(N)+\log(1/\epsilon)}{\log(1/\|A\|_1)}\le 1+\frac{\log(N)+\log(1/\epsilon)}{1-\|A\|_1}.\]
\end{remark}
\begin{proof}[Proof of Proposition \ref{GlauberSystemicProp}]
The proof is similar to the proof of Proposition \ref{GlauberDobrushinProp}, but this time we need to show that $\mtx{l}^{k+1}\le \mtx{B} \mtx{l}^{k}$. The details are left to the reader.
\end{proof}

\subsubsection{Curie-Weiss model}\label{CurieWeissbound}
Let $\Lambda:=\{-1,1\}$, $\Omega=\Lambda^N$.
The natural distance on $\Lambda$ is $d_{\Lambda}(a,b):=\II[a\ne b]$, which induces the \emph{Hamming distance} $d(x,y):=\sum_{i=1}^{n}\II[x_{i}\ne y_{i}]$ for $x,y\in \Omega$.
For any $\omega\in \Omega$, let the Hamiltonian function be
\[H_{CW}^{\beta,h}(\omega):=\frac{\beta}{N}\cdot \sum_{1\le i<j\le N}\omega_i\omega_j+h\sum_{i\le N}\omega_i.\] 
Here $\beta>0$ is called the inverse temperature, and $h$ is the external field.
Define the probability distribution on $\Omega$ as
\begin{equation}\label{eqCWprobdef}
\pi_{CW}^{\beta,h}(\omega)=\exp(H_{CW}^{\beta,h}(\omega))/Z_{CW}^{\beta,h},
\end{equation}
where $Z_{CW}^{\beta,h}$ is a normalising constant.
In the zero magnetisation case ($h=0$), this model is known to undergo phase transition at $\beta=1$. We call $\beta<1$ the high-temperature phase, $\beta=1$ the critical phase, and $\beta>1$ the low-temperature phase.

When applying the Glauber dynamics chains (with random, or systemic scan) of the previous section to this model (see Propositions \ref{GlauberDobrushinProp} and \ref{GlauberSystemicProp}), the distribution $\pi_{CW}^{\beta,h}$ arises as their stationary distribution.
The following proposition estimates the multi-step coarse Ricci curvature of these chains.
\begin{proposition}[Ricci curvature for the Curie-Weiss model]
For the Curie-Weiss model described above, for any $h$ and $\beta$, for any $k\ge 2$, we have
\begin{align*}\kappa^{Gl.rand.scan.}&\ge \left(1-\beta\frac{N-1}{N}\right)\frac{1}{N}, \hspace{2mm}
\kappa^{Gl.sys.scan.}\ge 2-\econst^{\beta}, \\
\kappa_k^{Gl.sys.scan.}&\ge 1-\beta \econst^{\beta} \left(\beta \frac{N-1}{N}\right)^{k-1}, \hspace{2mm}\gammaps^{Gl.sys.scan.}\ge \frac{1-\beta\cdot (N-1)/N}{4}.
\end{align*}
Finally, for $\beta=1$ and $h=0$ (the critical phase), there exists a universal constant $C>0$ such that for any $N$, any $k\ge CN^{3/2}\log(N)$, $\kappa^{Gl.rand.scan.}_{k}\ge 1/2$, and $\left(\kappa_{\Sigma}^c\right)^{Gl.rand.scan.}\le 2CN^{3/2} \log(N)$.
\end{proposition}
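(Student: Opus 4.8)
The plan is to establish the four claimed curvature bounds for the Curie-Weiss model by computing the Dobrushin interdependence matrix $\mtx{A}$ for the single-spin conditional distributions and then invoking Propositions \ref{GlauberDobrushinProp} and \ref{GlauberSystemicProp}. The starting point is to compute, for the distribution $\pi_{CW}^{\beta,h}$, the conditional law $\mu_i(\cdot\mid x_{-i})$ of spin $i$ given the others. Since the Hamiltonian is symmetric in the interactions, this conditional is a logistic law depending only on the sum $\sum_{j\ne i}x_j$, and the Dobrushin coefficient $a_{ij}$ is governed by how much flipping a single neighbouring spin $x_j$ moves the conditional mean. A direct computation (the derivative of the logistic function, bounded by $1/4$ times the coupling strength $\beta/N$, then converted to total-variation via $d_\Lambda(a,b)=\II[a\ne b]$) should give $a_{ij}\le \beta/N$ for $i\ne j$, so that $\|\mtx{A}\|_1\le \beta(N-1)/N$ and every entry is uniformly of order $\beta/N$.

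Given this estimate for $\mtx{A}$, the random-scan bound $\kappa^{Gl.rand.scan.}\ge (1-\beta\frac{N-1}{N})/N$ follows by taking $k=1$ in Proposition \ref{GlauberDobrushinProp} and using $\|\frac{N-1}{N}\mtx{I}+\frac{1}{N}\mtx{A}\|_1\le \frac{N-1}{N}+\frac{1}{N}\cdot\beta\frac{N-1}{N}$; actually one wants the cleaner form, so I would bound $\|\frac{N-1}{N}\mtx{I}+\frac{1}{N}\mtx{A}\|_1 = \frac{N-1}{N}+\frac{1}{N}\|\mtx{A}\|_1$ and simplify. For the systemic-scan bounds I would form the product matrix $\mtx{B}=\mtx{B}_N\cdots \mtx{B}_1$ as in Proposition \ref{GlauberSystemicProp}, where each $\mtx{B}_i$ differs from $\mtx{I}$ only in row $i$, which carries entries of size $\beta/N$. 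The key quantitative step is to bound $\|\mtx{B}\|_1$ and $\|\mtx{B}^k\|_1$. Because each $\mtx{B}_i$ is close to the identity with an off-diagonal perturbation of order $\beta/N$ concentrated in one row, multiplying $N$ of them together yields a matrix whose column sums are controlled by $(1+\beta/N)^N\approx \econst^{\beta}$; this is precisely where the factor $\econst^\beta$ enters. I would extract $\kappa^{Gl.sys.scan.}\ge 2-\econst^{\beta}$ from a first-order bound $\|\mtx{B}\|_1\le \econst^{\beta}-1$ (giving $1-\|\mtx{B}\|_1\ge 2-\econst^\beta$), and the geometric decay $\kappa_k^{Gl.sys.scan.}\ge 1-\beta\econst^{\beta}(\beta\frac{N-1}{N})^{k-1}$ from tracking how the spectral-radius-governed contraction $(\beta\frac{N-1}{N})^{k-1}$ compounds across $k$ sweeps, with the $\beta\econst^{\beta}$ prefactor absorbing the within-sweep amplification. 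The pseudo-spectral-gap bound $\gammaps^{Gl.sys.scan.}\ge (1-\beta(N-1)/N)/4$ would then come from Proposition \ref{kappagammaprop}, equation \eqref{gammapseq1}, applied with $k=1$, noting that the time reversal has a comparable Dobrushin matrix and the constant $4$ arises from the product $(1-\kappa_1(P^*))(1-\kappa_1)$ together with the $1/k$ denominator.

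For the critical case $\beta=1$, $h=0$, the Dobrushin matrix satisfies $\|\mtx{A}\|_1=(N-1)/N$, so the naive contraction is only $1-\OO(1/N^2)$ per step and the matrix-norm argument degenerates — the spectral radius of $\frac{N-1}{N}\mtx{I}+\frac{1}{N}\mtx{A}$ is essentially $1$. The plan here is to abandon the Dobrushin estimate and instead invoke the first approach mentioned in the introduction, namely bounding $\kappa_k$ for large $k$ through the mixing time via Proposition \ref{mixingRicciprop}: once $k\ge\tmix(\epsilon/2)$ with $\epsilon$ small, \eqref{kappatmixbound} forces $\kappa_k$ close to $1$. Concretely I would cite the known mixing time of random-scan Glauber dynamics for the critical Curie-Weiss model, which is of order $N^{3/2}$ (after accounting for the $\log N$ factor coming from the diameter-to-$d_0$ ratio $\diam(\Omega)=N$, $d_0=1$ in \eqref{kappatmixbound}), to conclude that $\kappa_k^{Gl.rand.scan.}\ge 1/2$ for $k\ge CN^{3/2}\log N$. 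The summability estimate $(\kappa_\Sigma^c)^{Gl.rand.scan.}\le 2CN^{3/2}\log N$ then follows from \eqref{eqkappasigmacbound} with this choice of $k$, using $M=1$ (since all $1-\kappa_i\le 1$ for this lazy chain) and $\kappa_k\ge 1/2$.

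The main obstacle I anticipate is the systemic-scan product-matrix analysis: carefully bounding $\|\mtx{B}\|_1$ and $\|\mtx{B}^k\|_1$ in terms of $\econst^\beta$ and the spectral radius $\beta\frac{N-1}{N}$ requires tracking how off-diagonal mass accumulates as the sweep progresses, since the $\mtx{B}_i$ do not commute and a crude submultiplicative bound $\|\mtx{B}\|_1\le\prod_i\|\mtx{B}_i\|_1$ would be far too lossy (each $\|\mtx{B}_i\|_1$ can exceed $1$). Getting the sharp prefactor $\beta\econst^\beta$ rather than a worse exponential in $N$ is the delicate part, and I would handle it by showing inductively that after processing spins $1,\dots,i$ the relevant column sums grow at most like $\prod_{j\le i}(1+a_{\cdot j})$, a telescoping product that converges to $\econst^\beta$ in the limit $N\to\infty$. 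The critical-case bound, by contrast, is mostly a matter of correctly citing and plugging in the known mixing-time order; the only subtlety there is reconciling the $\log N$ factor with the diameter normalisation in \eqref{kappatmixbound}.
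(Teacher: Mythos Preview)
Your overall plan matches the paper's proof closely for the Dobrushin matrix computation, the random-scan bound, the one-sweep systemic-scan bound $\kappa^{Gl.sys.scan.}\ge 2-\econst^{\beta}$, and the critical-phase argument via the mixing-time route. Two of the four systemic-scan claims, however, have genuine gaps.

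\textbf{The $\kappa_k^{Gl.sys.scan.}$ bound.} You correctly identify this as the delicate step, but your proposed mechanism---an inductive column-sum bound yielding the telescoping product $(1+\beta/N)^N\to\econst^\beta$---only delivers $\|\mtx{B}\|_1\le \econst^{\beta}-1$. That bound is \emph{larger than $1$} for all $\beta>\log 2$, so submultiplicativity gives nothing for $\|\mtx{B}^k\|_1$, and invoking ``spectral-radius-governed contraction'' is only an asymptotic statement (Gelfand) without an explicit prefactor. The paper's key additional idea, which you are missing, is to exhibit a concrete weight vector $v=(0,\tfrac{1}{N-1},\tfrac{2}{N-1},\ldots,1)$ that is sub-invariant for $\mtx{B}$ in the sense $(v\mtx{B})_i\le \beta\tfrac{N-1}{N}\,v_i$. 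One then checks that the row-sum vector $\mtx{1}\mtx{B}$ is dominated componentwise by $\beta\tfrac{N-1}{N}\econst^{\beta}\,v$, and iterating the sub-invariance gives $\|\mtx{B}^k\|_1=\max(\mtx{1}\mtx{B}^k)\le \econst^{\beta}\bigl(\beta\tfrac{N-1}{N}\bigr)^{k}$. Without this weighted-vector trick (or an equivalent device), your argument cannot bridge the gap between $\|\mtx{B}\|_1>1$ and geometric decay of $\|\mtx{B}^k\|_1$.

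\textbf{The $\gammaps^{Gl.sys.scan.}$ bound.} Applying \eqref{gammapseq1} with $k=1$ does not work: you only have $\kappa_1\ge 2-\econst^{\beta}$, which is negative for $\beta>\log 2$, so $(1-\kappa_1(P^*))(1-\kappa_1)\ge(\econst^{\beta}-1)^2>1$ and the bound is vacuous throughout most of the high-temperature regime $\beta<1$. The paper instead first uses the $\kappa_k$ bound just established: with $k=\lceil 2/(1-\beta\tfrac{N-1}{N})\rceil$ one gets $\kappa_k\ge 1-1/\econst$, and then \eqref{gammapseq1} (with the symmetry $\kappa_k(P^*)=\kappa_k$) gives $\gammaps\ge (1-1/\econst^{2})/k\ge (1-\beta\tfrac{N-1}{N})/4$. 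So the constant $4$ comes from the $1/k$ denominator with this particular choice of $k$, not from a $k=1$ calculation.
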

\begin{proof}
A simple calculation shows that for the Curie-Weiss model, the following matrix is a Dobrushin interdependence matrix for any $\beta$ and $h$ (albeit not the sharp one for $h\ne 0)$. 
\[A^{CW}:=\left( \begin{array}{ccccccc}
0 & \beta/N & \beta/N & \beta/N &  \ldots \\
\beta/N &  0 & \beta/N & \beta/N&  \ldots \\
\vdots &\vdots & \vdots &\vdots  & \ldots\\
\beta/N &\beta/N & \beta/N & \ldots & 0\\
\end{array} \right).\]
Since $\|A^{CW}\|_1<\beta(N-1)/N$, $\kappa^{Gl.rand.scan.}\ge 1-\beta(N-1)/N$ by Proposition \ref{GlauberDobrushinProp}. For the Glauber dynamics with systemic scan, we apply Proposition \ref{GlauberSystemicProp} with the Dobrushin interdependence matrix $A^{CW}$.
Let $x:=\beta/N$, then after some calculations, we obtain that the matrix $B$ is given by
\begin{align*}b_{i,1}&=0, \hspace{2mm} b_{i,i+1}=b_{i,i+2}=\ldots=b_{i,N}=x(1+x)^{i-1}, \\
b_{i+k,i}&=x\cdot\left((1+x)^{i+k-1}-(1+x)^k\right),
\end{align*}
for $1\le i\le N$, $0\le k\le N-i$. Now for any $k\ge 1$, $\|B^k\|_1=\max(\mtx{1} \cdot B^k)$ (maximum column sum), with $\mtx{1}$ denoting a row vector of ones, and $\max$ denoting the maximal element of the vector. After a simple calculation, we get that for $1\le i\le N$,
\[(\mtx{1}\cdot B)_i=(1+x)^N-(1+x)^{N-i+1},\]
which implies that $\|B\|_1=\max(\mtx{1}\cdot B)=(1+x)^N-1-x=\econst^{\beta}-1-\beta/N$, thus by Proposition \ref{GlauberSystemicProp},
\[\kappa^{Gl.sys.scan.}\ge 2-\econst^{\beta}.\]

As we can see, $\kappa$ is negative for part of the high temperature case ($\beta<1$).
Now we will use the following lemma.
\begin{lemma}
Let $v=(0,1/(N-1),2/(N-1),\ldots,1)$. Then for $B$ defined as above, 
\[(v\cdot B)_{i}\le v_{i}\cdot \left(\beta \cdot \frac{N-1}{N}\right).\]
\end{lemma}
This lemma can be proven by straightforward calculations, which we omit.

Now it is easy to see that for $1\le i\le N$, 
\[(\mtx{1}\cdot B)_i\le ((1+x)^N-(1+x)^{N-1})\cdot (i-1)=\beta (1+x)^{N-1}\cdot \frac{i-1}{N}\le \beta \frac{N-1}{N} \econst^{\beta}  \frac{i-1}{N-1},\]
and thus by the above lemma, we can conclude that 
\[\|B^k\|_1=\max(\mtx{1}\cdot B^k)\le  \econst^{\beta} \left(\beta \frac{N-1}{N}\right)^{k},\]
which implies that for $k\ge 1$,
\[\kappa_k\ge 1-\|B^k\|_1\ge 1- \econst^{\beta} \left(\beta \frac{N-1}{N}\right)^{k}.\]
From this, for $0\le \beta\le 1$, with the choice of $k=\lceil 2/(1-\beta\cdot (N-1)/N)\rceil$ (using the identity $(1-c)^{(1/c)}\le (1/e)$ for $c>0$), we get $\kappa_k\ge 1-1/e$.
By symmetry $\kappa_k(\mtx{P}^*)=\kappa_k$, so using \eqref{gammapseq1}, we get
\[\gammaps^{Gl.sys.scan.}\ge (1-1/\econst^2)/\lceil 2/(1-\beta\cdot (N-1)/N)\rceil\ge \frac{1-\beta\cdot (N-1)/N}{4}.\]
Finally, we move to the case of the critical phase ($\beta=1, h=0$). Theorem 2 of\cite{LevinLuczakPeres} (see also \cite{DingMeanfieldIsing}) shows that the mixing time satisfies $\tmix=\mathcal{O}(N^{3/2})$, thus \eqref{kappatmixbound} gives us the bound on $\kappa_k$, and by \eqref{eqkappasigmacbound}, we get the bound on $\kappa_{\Sigma}^c$.
\end{proof}

Substituting the bound $\left(\kappa_{\Sigma}^c\right)^{Gl.rand.scan.}\le 2CN^{3/2} \log(N)$ and $\hat{\sigma}_{\max}=1$ to Theorem \ref{revconcthm} leads to the following concentration inequality (a new result).
\begin{proposition}[Concentration for the critical phase]
In the critical phase of the Curie-Weiss model ($\beta=1, h=0$), for any $f:\Omega\to \R$ that is 1-Lipschitz with respect to $d$ (Hamming distance), for any $t\ge 0$,
\[\PP(|f(X)-\E f|\ge t)\le 2\exp\left(-\frac{t^2}{2CN^{3/2}\log(N)}\right),\]
where $X\sim \pi_{CW}^{1,0}$, and $C$ is an universal constant.
\end{proposition}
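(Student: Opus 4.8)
The plan is to read the inequality off directly from the Gaussian bound \eqref{revconceq1} of Theorem \ref{revconcthm}, applied to the random scan Glauber dynamics for the Curie-Weiss model at $\beta=1$, $h=0$. To do this I first verify that the hypotheses of that theorem hold in this setting: reversibility, the integrability condition \eqref{revvarcond}, and a usable value for $\hatsigmamax$; the bound on $\kappa_{\Sigma}^c$ is then supplied by the preceding proposition. The point is that all the genuine work has already been done, so this statement is essentially a corollary obtained by substitution.

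For the verifications: reversibility holds because the single-site heat-bath (Glauber) update resamples a uniformly chosen coordinate from its correct conditional distribution under $\pi_{CW}^{1,0}$, which is the standard detailed-balance construction, so $\mtx{P}$ is self-adjoint on $L_2(\pi_{CW}^{1,0})$. For the jump length, observe that a single step changes at most one coordinate, so under $P_x$ the Hamming distance $d(x,y)$ takes only the values $0$ or $1$; hence $\int d(x,y)^2\,\mathrm{d}P_x(y)=\int d(x,y)\,\mathrm{d}P_x(y)\le 1$, giving $\hat{\sigma}(x)\le 1$ and therefore $\hatsigmamax\le 1$. Finally, because $\Omega=\{-1,1\}^N$ is finite, every integral of the form $\int d(x,y)\,\kappa_{\Sigma}^c(x,y)\,\mathrm{d}P_x(y)$ is a finite sum, so the integrability condition \eqref{revvarcond} is automatic.

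With these in hand, I would invoke the preceding proposition, which gives $\left(\kappa_{\Sigma}^c\right)^{Gl.rand.scan.}\le 2CN^{3/2}\log(N)$ for a universal constant $C$, and substitute $\kappa_{\Sigma}^c\le 2CN^{3/2}\log(N)$ together with $\hatsigmamax^2\le 1$ into \eqref{revconceq1}. Since $x\mapsto \exp(-t^2/x)$ is increasing, enlarging the denominator $\kappa_{\Sigma}^c\cdot\hatsigmamax^2$ to $2CN^{3/2}\log(N)$ only weakens the bound, yielding exactly the claimed inequality. I do not expect a genuine obstacle at this stage: the entire analytic content sits in the preceding proposition's estimate of $\kappa_{\Sigma}^c$, which itself rests on the $\tmix=\mathcal{O}(N^{3/2})$ mixing bound of \cite{LevinLuczakPeres} fed through \eqref{kappatmixbound} and \eqref{eqkappasigmacbound}. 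The only points that require any care here are the two structural facts above, namely reversibility of the random scan chain and the single-coordinate jump bound $\hatsigmamax\le 1$, and both are immediate for heat-bath dynamics under the Hamming metric.
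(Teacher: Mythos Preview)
Your proposal is correct and follows exactly the paper's approach: the paper proves this proposition in a single sentence just before its statement, namely by substituting $\left(\kappa_{\Sigma}^c\right)^{Gl.rand.scan.}\le 2CN^{3/2}\log(N)$ and $\hat{\sigma}_{\max}=1$ into Theorem~\ref{revconcthm}. Your additional verifications of reversibility, $\hatsigmamax\le 1$, and condition~\eqref{revvarcond} are appropriate elaborations of hypotheses the paper leaves implicit; the only minor quibble is that finiteness of $\Omega$ alone does not make $\kappa_{\Sigma}^c(x,y)$ finite (it is an infinite sum in $k$), but this follows from $\kappa_{\Sigma}^c(x,y)\le \kappa_{\Sigma}^c<\infty$.
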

\begin{remark}
This most likely holds without the $\log(N)$ term as well. The constant in the exponent should be at least of order $N^{3/2}$, as one can see from the limiting distribution of the magnetisation ($f(\omega)=\sum_{i=1}^N \omega_i$), where one has to normalise by $N^{3/4}$ (see \cite{ChatterjeeShao}, page 466).  Proposition 4 of \cite{ChatDey} shows a subgaussian ($\exp(-c t^4)$) concentration bound for the magnetisation.
\end{remark}

\subsubsection{1D Ising model}\label{Sec1DIsing}
Let $\Omega=\{-1,1\}^N$. Let $d$ be the Hamming distance on $\Omega$, as in the previous section. For any $\omega\in \Omega$, let the \emph{Hamiltonian} function be
\[H_{\mathrm{I1D}}^{\beta,h}(\omega):=\frac{\beta}{N}\cdot \sum_{1\le i<N}\omega_i\omega_{i+1}+h\sum_{i\le N}\omega_i.\] 
Here $\beta>0$ is called the inverse temperature, and $h$ is the external field.
Define the probability distribution on $\Omega$ as
\begin{equation}\label{eqCWprobdef}
\pi_{1D}^{\beta,h}(\omega)=\exp(H_{\mathrm{I1D}}^{\beta,h}(\omega))/Z_{\mathrm{I1D}}^{\beta,h},
\end{equation}
where $Z_{\mathrm{I1D}}^{\beta,h}$ is a normalising constant.
This model is known to have no phase transition. The following proposition applies our results on this model, assuming that $h=0$.
\begin{proposition}[Ricci curvature for 1D Ising model]
For the 1D Ising model described above, for $h=0$, for any $\beta>0$, let $\rho:=1/(1+\econst^{-4\beta})$, then
\begin{align*}\kappa^{Gl.rand.scan.}&\ge \frac{2}{N}(1-\rho), 
\hspace{2mm}\kappa^{Gl.sys.scan.}\ge 2(1-\rho)/(3/2-\rho), \\
\gammaps^{Gl.sys.scan.}&\ge 2(1-\rho)/(3/2-\rho)^2.
\end{align*}
\end{proposition}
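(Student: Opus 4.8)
The plan is to compute a Dobrushin interdependence matrix for the 1D Ising model and then feed it into Propositions~\ref{GlauberDobrushinProp} and~\ref{GlauberSystemicProp}. First I would verify that for the 1D Ising model with $h=0$, the conditional distribution of spin $i$ given the rest depends only on its two neighbours $i-1$ and $i+1$ (the boundary spins having a single neighbour). The worst-case total variation distance between the two conditional laws, when one neighbour is flipped, is governed by the parameter $\rho=1/(1+\econst^{-4\beta})$: a direct computation of the conditional spin probabilities $\mu_i(\cdot\mid x_{-i})$ under $H_{\mathrm{I1D}}^{\beta,0}$ shows that flipping a single neighbour changes the conditional probability by at most $2\rho-1$, and since $d_\Lambda=\II[\cdot\ne\cdot]$, the Wasserstein distance equals this total variation gap. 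This yields a tridiagonal Dobrushin matrix $A$ with entries $a_{i,i\pm1}=2\rho-1$ (with the constant absorbed appropriately), so that $\|A\|_1\le 2(2\rho-1)$ away from the boundary, or more precisely the relevant column sums are controlled by $2\rho-1$ per neighbour.

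Once $A$ is in hand, the random scan bound follows immediately from \eqref{gammaspboundeq}: I would estimate $\mathrm{sp}(A)$ for the tridiagonal matrix. Since $A$ is (essentially) the adjacency matrix of a path scaled by $2\rho-1$, its spectral radius is $(2\rho-1)\cdot 2\cos(\pi/(N+1))\le 2(2\rho-1)=2(1-\econst^{-4\beta})/(1+\econst^{-4\beta})=2(1-\rho)\cdot\text{(something)}$; the key identity here is that $1-(2\rho-1)=2(1-\rho)$, which is exactly the numerator appearing in the claimed bound $\kappa^{Gl.rand.scan.}\ge \frac{2}{N}(1-\rho)$. So the random scan estimate reduces to plugging $\mathrm{sp}(A)\le 2\rho-1$ into $\gamma^*\ge(1-\mathrm{sp}(A))/N$ and simplifying.

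For the systemic scan, I would form the matrices $B_i$ (identity except row $i$ replaced by row $i$ of $A$) and compute $B=B_N\cdots B_1$, exactly as in Proposition~\ref{GlauberSystemicProp}. The main obstacle will be controlling $\|B\|_1$ for this sweep composition. Unlike the Curie-Weiss case, here $A$ is sparse (tridiagonal), so the product $B$ should remain relatively structured, but the forward sweep introduces dependence on already-updated coordinates, complicating the column sums. I expect that a careful bookkeeping of how the $(2\rho-1)$ factors propagate through the sweep yields $\|B\|_1\le (3/2-\rho)/\big(\tfrac{1}{2}\cdot\text{const}\big)$ type bounds; concretely, the claim $\kappa^{Gl.sys.scan.}\ge 2(1-\rho)/(3/2-\rho)$ suggests $\|B\|_1\le 1-2(1-\rho)/(3/2-\rho)=(1/2-\rho+2\rho)/(3/2-\rho)=(\rho-1/2+1)/(3/2-\rho)$, i.e.\ $\|B\|_1\le (\rho+1/2)/(3/2-\rho)$, which I would verify by the explicit recursion for the entries of $B$.

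Finally, the pseudo spectral gap bound $\gammaps^{Gl.sys.scan.}\ge 2(1-\rho)/(3/2-\rho)^2$ follows by combining the systemic scan curvature estimate with \eqref{gammapseq1}. Since the Ising model is reversible under both scans in the symmetric sense, I would argue $\kappa_k(P^*)=\kappa_k$ (or bound it by the same quantity), so that $\gammaps\ge \frac{1-(1-\kappa_1)^2}{1}\ge \kappa_1(2-\kappa_1)$ with $k=1$; substituting $\kappa_1\ge 2(1-\rho)/(3/2-\rho)$ and using $2-\kappa_1\ge 1$ together with the denominator structure gives the stated $(3/2-\rho)^2$ in the denominator. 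The hardest step throughout is the systemic scan computation of $\|B^k\|_1$, since it requires tracking the propagation of perturbations through a full sequential sweep rather than a single simultaneous update; I would lean on the tridiagonal sparsity of $A$ to keep this tractable and only need the $k=1$ bound for the final spectral conclusions.
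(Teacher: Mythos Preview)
Your approach is essentially the same as the paper's: compute the tridiagonal Dobrushin matrix, apply Proposition~\ref{GlauberDobrushinProp} for the random scan bound, compute $B$ explicitly and apply Proposition~\ref{GlauberSystemicProp} for the systemic scan bound, then use \eqref{gammapseq1} with $\kappa_1(P^*)=\kappa_1$ for the pseudo spectral gap. A few details need correcting, though.

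First, the off-diagonal entries of $A$ are $\rho-1/2$, not $2\rho-1$: flipping a single neighbour moves the conditional probability from $1-\rho$ to $1/2$ or from $1/2$ to $\rho$, a change of $\rho-1/2$ in either case. The column sum $\|A\|_1=2(\rho-1/2)=2\rho-1$ is what feeds into the random scan bound (you want $\kappa\ge (1-\|A\|_1)/N$, not the spectral radius version of \eqref{gammaspboundeq}, since the claim is about $\kappa$, not $\gamma^*$). Second, your arithmetic for the target $\|B\|_1$ slipped: $1-2(1-\rho)/(3/2-\rho)=(\rho-1/2)/(3/2-\rho)$, not $(\rho+1/2)/(3/2-\rho)$; the paper obtains this by summing a geometric column of powers of $\rho-1/2$ in $B$. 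Third, for $\gammaps$ the crude estimate $2-\kappa_1\ge 1$ gives only $\gammaps\ge 2(1-\rho)/(3/2-\rho)$, which is \emph{weaker} than the stated bound since $3/2-\rho<1$. You must compute $1-(1-\kappa_1)^2$ exactly: with $1-\kappa_1=(\rho-1/2)/(3/2-\rho)$, the difference of squares in the numerator collapses to $(1)\cdot(2-2\rho)$, yielding $2(1-\rho)/(3/2-\rho)^2$.
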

\begin{proof}
For the 1 dimensional Ising model, the probability of a spin being $1$, given that $m$ of it's neighbours are 1, $m=0,1,2$, is
\[\frac{1}{1+\exp(4\beta-2h)}, \frac{1}{1+\exp(-2h)}, 
\frac{1}{1+\exp(-4\beta-2h)},\text{ respectively.}\]
It follows that for this model, the Dobrushin matrix is tridiagonal, with the diagonal elements being 0. For $h\le 0$, the above and below-diagonal elements equal
$\frac{1}{1+\exp(-4\beta-2h)}-\frac{1}{1+\exp(-2h)}$, while for $h> 0$, they equal $\frac{1}{1+\exp(-2h)}-\frac{1}{1+\exp(4\beta-2h)}$.  In the case of zero external field, $h=0$, the upper and lower diagonal elements equal $\rho-1/2$, and $\|A^{\mathrm{I1D}}\|_1=2\rho-1<1$. Using this, $\kappa^{Gl.rand.scan.}\ge \frac{2}{N}(1-\rho)$ follows by Proposition \ref{GlauberDobrushinProp}.

In the systemic scan case, it is easy to see that for $1\le j\le N$, $b_{j1}=0$, for $1<r\le N$,
$b_{{r-1},r}=\rho-1/2$, $b_{r,r}=(\rho-1/2)^2$, and for $r<j\le N$, $b_{j,r}=(\rho-1/2)^{2+j-r}$. This impies that $\|B\|_1\le (\rho-1/2)/(1-\rho+1/2)$, and $\kappa^{Gl.sys.scan.}\ge 2(1-\rho)/(3/2-\rho)$ follows by Proposition \ref{GlauberSystemicProp}. Finally, by symmetry and using Proposition \ref{kappagammaprop}, we have \[\gammaps^{Gl.sys.scan.}\ge 1-(1-\kappa^{Gl.sys.scan.})^2\ge 2(1-\rho)/(3/2-\rho)^2.\qedhere\]
\end{proof}

\subsection{Random walk on a binary cube with a forbidden region}\label{SecBinaryCube}
Consider a binary cube $\Omega_0:=\{0,1\}^N$. We call the region $F:=\{x\in \Omega_0, \sum x_i< R\}$  the forbidden region. Let $\Omega:=\Omega_0\setminus F$.
We consider the following random walk (a version of Glauber dynamics) on $\Omega$. If we are in $x$, then we pick an index $I$ out of $\{1,\ldots,N\}$ uniformly, and 
\begin{itemize}
\item if $\sum_{i=1}^{N} x_i>R$, or if $\sum_{i=1}^{N} x_i=R$ and $x_I=0$, then $x_I$ is replaced with an independent Bernoulli(1/2) random variable,
\item if $\sum_{i=1}^{N} x_i=R$, and $x_I=1$, then we do nothing, and stay in $x$.
\end{itemize}
The stationary distribution $\pi$ is the uniform distribution on $\Omega$ (the random walk can be shown to be reversible with respect to this distribution). Because of the geodesic property, it is sufficient to look at $\kappa_k(x,y)$ for neighbouring $x$ and $y$. 
Because of symmetry, we can denote this by $\kappa_k(j):=\kappa_k(x,y)$ for $x$ such that $\sum_{i=1}^N x_i=j$, $y$ such that $\sum_{i=1}^{N} y_i=j+1$, and $\sum_{i=1}^{N} \II[x_i\ne y_i]=1$ (for $R\le j\le N-1$). Initially, we have negative curvature,
\[\kappa_1(R)=\frac{2-R}{2N}, \kappa_1(j)=\frac{1}{N} \text{ for }R< j\le N-1.\]
From Proposition \ref{kstepricciestimateprop}, we get the recursive bounds
\begin{align*}
\kappa_{k+1}(R)&\ge \frac{2-R}{2N}+\frac{N-1+R}{2N}\cdot \kappa_k(R)+\frac{N-R-1}{2N}\cdot \kappa_k(R+1),\\
\kappa_{k+1}(j)&\ge \frac{1}{N}+\frac{N-1}{2N}\cdot \kappa_k(j)+\frac{j}{2N}\cdot \kappa_k(j-1)+\frac{N-j-1}{2N}\cdot \kappa_k(j+1)
\end{align*}
for $R< j\le N-1$. Notice that all the coefficients of $\kappa_{k}(j)$ in these inequalities are positive. This implies that if we let $\tilde{\kappa}_1(j):=\kappa_1(j)$ for $R\le j\le N-1$, and let
\begin{align*}
\tilde{\kappa}_{k+1}(R)&:=\frac{2-R}{2N}+\frac{N-1+R}{2N}\cdot \tilde{\kappa}_k(R)+\frac{N-R-1}{2N}\cdot \tilde{\kappa}_k(R+1),\\
\tilde{\kappa}_{k+1}(j)&:= \frac{1}{N}+\frac{N-1}{2N}\cdot \tilde{\kappa}_k(j)+\frac{j}{2N}\cdot \tilde{\kappa}_k(j-1)+\frac{N-j-1}{2N}\cdot \tilde{\kappa}_k(j+1)
\end{align*}
for $R< j\le N-1$, then $\kappa_k(j)\ge \tilde{\kappa}_k(j)$ for every $k\ge 1$, $R\le j\le N-1$, implying that
$\tilde{\kappa}_k:=\min_{R\le j\le N-1}\tilde{\kappa}_k(j)\le \kappa_k$.
It is easy to conduct numerical simulations to see the behaviour of this recursion. The figures below show this for $N=500$, $R=100$.
\begin{figure}
\centering
\begin{tabular}{cc}
\addtolength{\subfigcapskip}{0.2cm}
\subfigure[$\tilde{\kappa}_1(j)$ for $R\le j<R+30$]{
    \includegraphics[width = 5cm]{./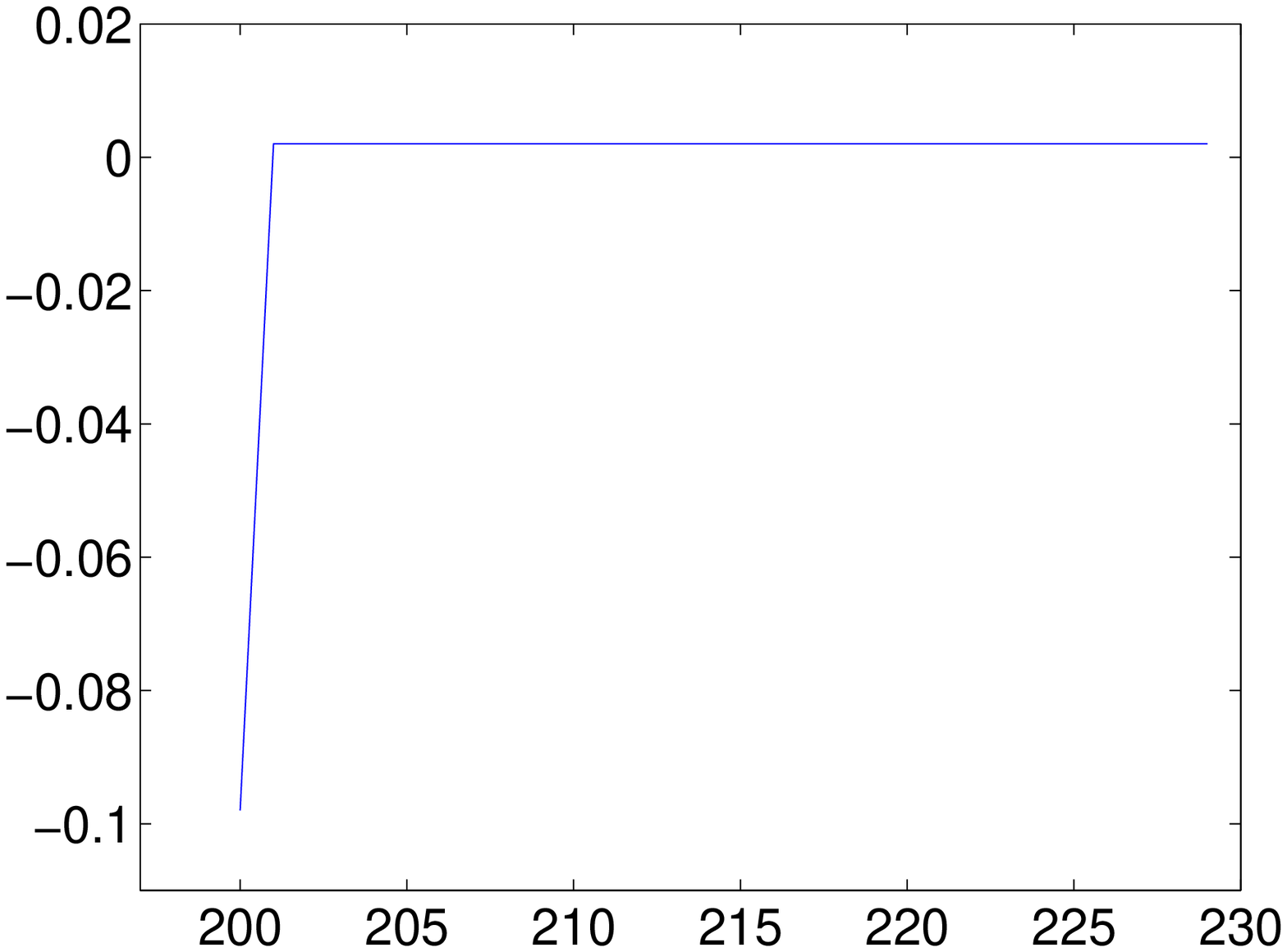}
	}
&
\addtolength{\subfigcapskip}{0.2cm}
\subfigure[$\tilde{\kappa}_{100}(j)$ for $R\le j<R+30$]{
    \includegraphics[width = 5cm]{./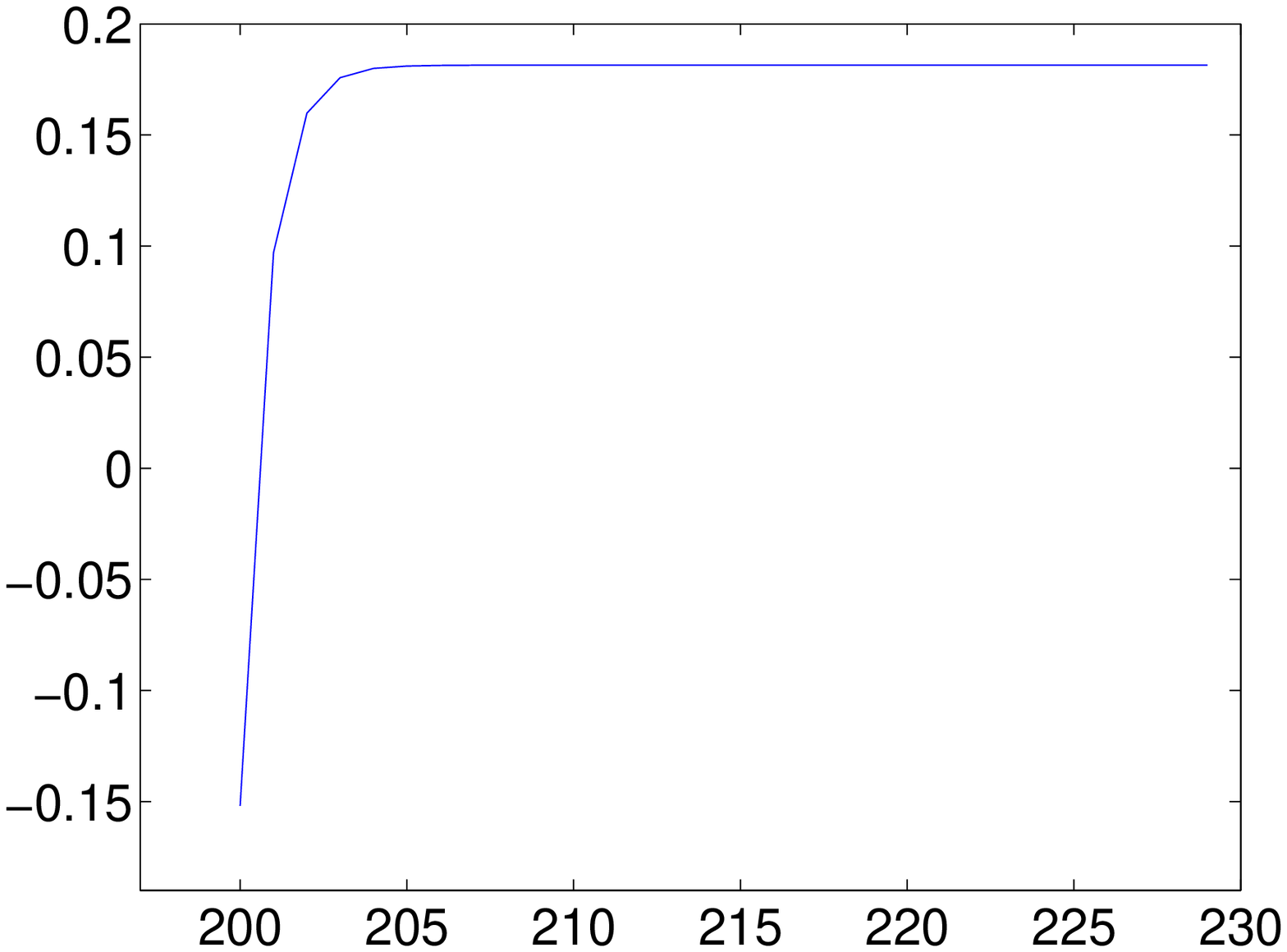}
	}
\\

\addtolength{\subfigcapskip}{0.2cm}
\subfigure[$\tilde{\kappa}_{500}(j)$ for $R\le j<R+30$]{
    \includegraphics[width = 5cm]{./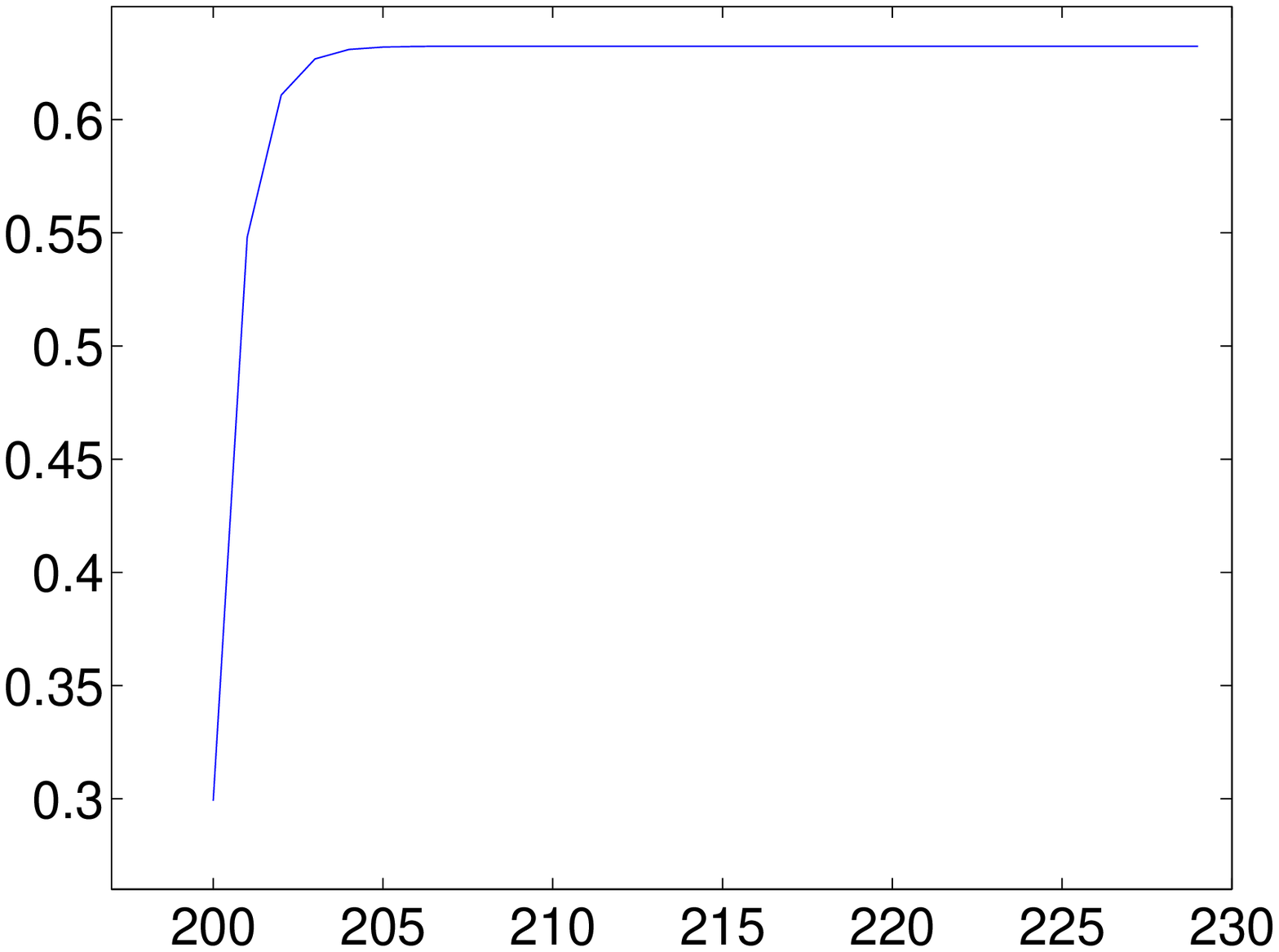}
	}
&	
\addtolength{\subfigcapskip}{0.2cm}
\subfigure[$\tilde{\kappa}_k$ for $1\le k\le 500$]{
    \includegraphics[width = 5cm]{./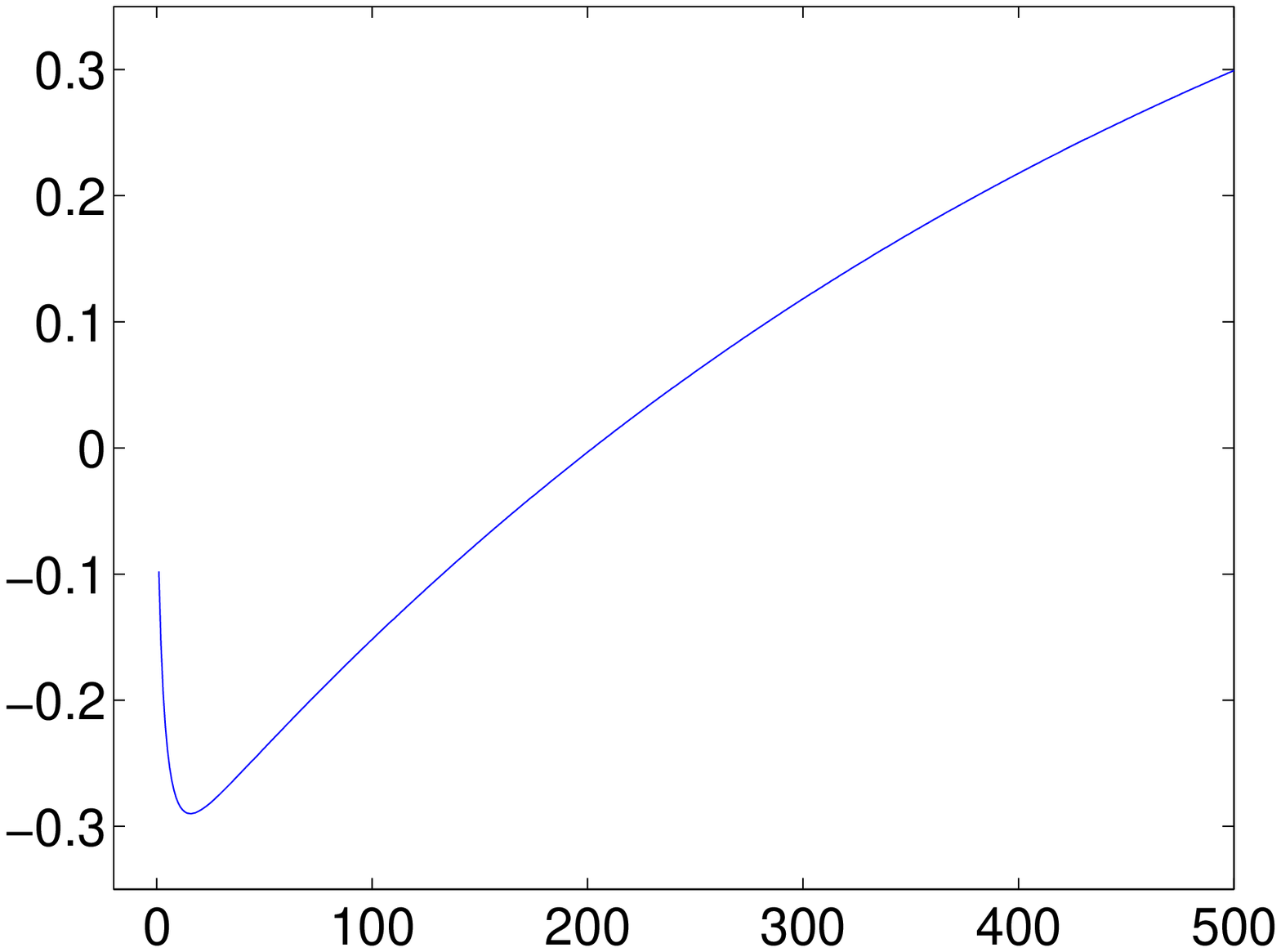}
	}
\end{tabular}
\caption[Evolution of the multi-step coarse Ricci curvature]{Evolution of the multi-step coarse Ricci curvature}
\end{figure}
The figures show that initially $\hat{\kappa}_k$ is decreasing, and stays negative, but eventually the positive curvature wins, and $\hat{\kappa}_k$ becomes positive. The following proposition gives bounds on $\kappa_N$ and $\kappa_{\Sigma}^c$ based on this recursion.
\begin{proposition}[Curvature bound for binary cube with forbidden region]\label{binarykappaprop}
Let $\rho(R/N):=\frac{1}{2}-\frac{1}{e}-\frac{R}{N-2R}$. Then for $R\le N/10$,
\[\kappa_N\ge \rho(R/N), \text{ and }\kappa_{\Sigma}^c\le \frac{N}{\rho(R/N)}\cdot \left(1+\frac{R}{N-2R}\right).\]
\end{proposition}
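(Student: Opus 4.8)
The plan is to control the recursion for $\tilde{\kappa}_k(j)$ introduced just before the statement, since $\kappa_N\ge \tilde{\kappa}_N=\min_{R\le j\le N-1}\tilde{\kappa}_N(j)$ and, by the geodesic property (Proposition \ref{geodesicprop}), it is enough to work with neighbouring pairs encoded by the level $j$. The key first move is the substitution $u_k(j):=1-\tilde{\kappa}_k(j)$, which \emph{linearises} the recursion: a direct check shows the constant terms $\tfrac1N$ and $\tfrac{2-R}{2N}$ cancel against the coefficients, leaving
\[
u_{k+1}=\mathcal{A}\,u_k+s,\qquad s(j)=\frac{R}{2N}\,\II[j=R],
\]
where $\mathcal{A}$ is a nonnegative matrix \emph{every} row of which sums to $\tfrac{N-1}{N}$, and the only source sits at the boundary level $R$. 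Iterating gives $u_N=\mathcal{A}^{N-1}u_1+\sum_{i=0}^{N-2}\mathcal{A}^i s$, which I would read through Feynman--Kac. Let $(Y_t)$ be the holding-plus-nearest-neighbour walk on $\{R,\dots,N-1\}$ obtained by renormalising the rows of $\mathcal{A}$, killed at the \emph{constant} rate $\tfrac1N$ per step (legitimate precisely because the row deficiency $\tfrac1N$ is the same everywhere), and let $\tau$ be the killing time; then
\[
u_N(j)=\E_j\!\left[u_1(Y_{N-1})\,\II[\tau>N-1]\right]+\frac{R}{2N}\sum_{i=0}^{N-2}\PP_j[Y_i=R,\ \tau>i].
\]

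For the first (``free-walk'') term I would exploit that the killing is position-independent, so $\II[\tau>N-1]$ factors out with probability $(1-\tfrac1N)^{N-1}$; together with the elementary bound $u_1\le 1+\tfrac{R}{2N}$ this caps the term by $(1-\tfrac1N)^{N-1}\bigl(1+\tfrac{R}{2N}\bigr)$. This is the piece responsible for the $\tfrac12-\tfrac1e$ block of $\rho$: it is essentially the curvature $1-(1-\tfrac1N)^N$ of the unconstrained Glauber walk, degraded only by the tiny boundary excess of $u_1$.

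The substantive work is the boundary-source term, which I would bound by $\tfrac{R}{2N}$ times the expected number of visits to level $R$ before killing. Here the mechanism is the strictly positive upward drift $\tfrac{N-2j-1}{2N}>0$ that holds throughout the relevant range $j\le R\le N/10$: a gambler's-ruin / supermartingale estimate should show that the probability of ever returning to $R$ from $R+1$ before being killed is at most $\tfrac{R+1}{N-R-1}$. Feeding this into the geometric ``number of excursions'' series bounds the expected number of visits to $R$ by $\tfrac{2N}{N-2R}$, so the source term is at most $\tfrac{R}{N-2R}$. Combining, $u_N(j)\le (1-\tfrac1N)^{N-1}\bigl(1+\tfrac{R}{2N}\bigr)+\tfrac{R}{N-2R}$; bounding the survival factor by its limit $\tfrac1e$ (up to the usual $O(1/N)$ correction) turns this into $\tilde{\kappa}_N(j)\ge \rho(R/N)$ for every $j$, hence $\kappa_N\ge\rho(R/N)$.

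For the second claim I would rerun the same two bounds for \emph{every} $k$, not just $k=N$: the survival term never exceeds $\|u_1\|_\infty\le 1+\tfrac{R}{2N}$ and the source is uniformly dominated by $\tfrac{R}{N-2R}$, so $M=\sup_{k\ge 0}(1-\kappa_k)\le 1+\tfrac{R}{N-2R}$. Then \eqref{eqkappasigmacbound} with $k=N$ gives
\[
\kappa_{\Sigma}^c\le \frac{NM}{\kappa_N}\le \frac{N}{\rho(R/N)}\Bigl(1+\frac{R}{N-2R}\Bigr),
\]
as claimed. I expect the main obstacle to be the return-probability estimate for the lazy, killed walk with spatially varying drift: one must verify that the repulsion near the boundary dominates even though the walk may wander far up (into the region of negative drift) before attempting to come back, and that the excursion series genuinely sums to $\tfrac{2N}{N-2R}$. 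Everything else is bookkeeping on the two explicit terms above.
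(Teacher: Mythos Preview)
Your linearisation $u_k=1-\tilde\kappa_k$ and the Feynman--Kac representation are correct and illuminating, but the proof has a real gap precisely where you flag it. The sketch ``gambler's ruin gives return probability $\le (R+1)/(N-R-1)$'' is not valid as stated: the walk $Y$ has no absorbing barrier at the top, so without killing it is recurrent on $\{R,\dots,N-1\}$ and the return probability to $R$ is $1$. The killing rate $1/N$ is far too small to convert a one-step drift bound into the claimed estimate, and as you yourself note, the walk may wander into the region $j>(N-1)/2$ where the drift reverses. What you would actually need is a Lyapunov function of the form $\phi(j)=c^{j-R}$ with a well-chosen $c<1$, and verifying that $\mathcal{A}\phi\le\phi$ (or nearly so) on the whole range is the real content of the Green's-function bound. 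There is also a minor slip in the $M$-bound: your two pieces combine to $M\le 1+R/(2N)+R/(N-2R)$, not $1+R/(N-2R)$.

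The paper's proof takes a shorter and rather different route: it writes down an explicit sub-solution
\[
\hat\kappa_k(R+i)=-\frac{\epsilon}{1-2\epsilon}\Bigl(\frac{\epsilon}{1-\epsilon}\Bigr)^{i}+\bigl(1-e^{-k/N}\bigr)-\frac{k-1}{2N},\qquad \epsilon=R/N,
\]
checks directly that $\hat\kappa_1\le\tilde\kappa_1$ and that $\hat\kappa_k$ satisfies the same recursive inequalities as $\tilde\kappa_k$, and then uses positivity of the coefficients to conclude $\hat\kappa_k(j)\le\kappa_k(j)$ for all $k,j$. Both claims follow by reading off $\hat\kappa_N(R)$ and $\min_{1\le k<N}\hat\kappa_k(R)$. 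The geometric factor $(\epsilon/(1-\epsilon))^{i}$ is exactly the Lyapunov function your occupation-time argument would require, so the two approaches are close in spirit; the paper's ansatz simply packages the boundary estimate and the free-walk contraction into one explicit formula and sidesteps the probabilistic machinery.
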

\begin{remark}
By Propositions \ref{mixingRicciprop} and \ref{kappagammaprop}, the spectral gap and mixing time of the walk can be bounded as $\gamma\ge \frac{1}{N}\rho(R/N)$, $\tmix\le 2N\log(N)/\rho(R/N)$.
Moreover, by Theorem \ref{revconcthm}, it follows that for a random vector $X\sim \pi$ and for any $1$-Hamming-Lipschitz function $f$, for any $t\ge 0$,
\[\PP(|f(X)-\E(f)|\ge t)\le 2\exp\left(-\frac{t^2}{N}\cdot \rho(R/N)/\left(1+\frac{R}{N-2R}\right)\right).\]
\end{remark}
\begin{proof}[Proof of Proposition \ref{binarykappaprop}]
Let $\epsilon:=R/N$, and for $0\le i\le N-R-1$, $k\ge 1$, let
\begin{equation}
\hat{\kappa}_k(R+i):=-\frac{\epsilon}{1-2\epsilon}\cdot \left(\frac{\epsilon}{1-\epsilon}\right)^i+\left(1-\exp\left(-\frac{k}{N}\right)\right)-\frac{k-1}{2N}.
\end{equation}
Then it is easy to see that $\hat{\kappa}_1(j)\le \tilde{\kappa}_1(j)\le \kappa_1(j)$ for $R\le j\le N-1$. Moreover, one can verify that for every $k\ge 1$,
\begin{align*}
\hat{\kappa}_{k+1}(R)&\le \frac{2-R}{2N}+\frac{N-1+R}{2N}\cdot \hat{\kappa}_k(R)+\frac{N-R-1}{2N}\cdot \hat{\kappa}_k(R+1),\\
\hat{\kappa}_{k+1}(j)&\le \frac{1}{N}+\frac{N-1}{2N}\cdot \hat{\kappa}_k(j)+\frac{j}{2N}\cdot \hat{\kappa}_k(j-1)+\frac{N-j-1}{2N}\cdot \hat{\kappa}_k(j+1)
\end{align*}
for $R< j\le N-1$, implying that $\hat{\kappa}_k(j)\le \tilde{\kappa}_k(j)\le \kappa_k(j)$. The bound on $\kappa_N$ now follows by noticing that $\kappa_k\ge \hat{\kappa}_k(R)$ for every $k\ge 1$, and the bound on $\kappa_{\Sigma}^c$ follows from \eqref{eqkappasigmacbound}.
\end{proof}

\section{Proofs of concentration results}\label{SecProofs}
In this section, we present the proofs of our concentration inequalities. First, we briefly review Chatterjee's method of proving concentration inequalities via Stein's method of exchangeable pairs. We prove our variance and concentration bounds for reversible chains using this approach. Finally, we prove our variance and concentration bounds without using reversibility, by a modification of Ollivier's proofs.
\subsection{Concentration inequalities via the method of exchangeable pairs}
For the proof of our theorems about reversible chains, we will use Stein's method of exchangeable pairs for concentration inequalities, developed in \cite{Cth}. Let $(X,X')$ be an exchangeable pair taking values in a Polish space $\Omega$. Let $f:\Omega\to \R$, $\E f(X)=0$, $\E (f(X)^2)<\infty$. Suppose that there is an antisymmetric function $F:\Omega^2 \to \R$ such that $\E (F(X,X')|X)=f(X)$. Define 
\begin{equation}
\Delta(X):=\frac{1}{2}\E(|F(X,X') (f(X)-f(X'))| |X),
\end{equation}
and assume that $\Delta(X)<\infty$ almost surely. Then the following results hold.

\begin{thm}[Theorem 3.2 of \cite{Cth}]\label{thm32}
With the above notations,
\[\Var (f(X))=\frac{1}{2}\E((f(X)-f(X'))F(X,X')).\]
\end{thm}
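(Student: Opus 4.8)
The plan is to reduce the identity to two elementary computations that link the quantity $\E\big(f(X)F(X,X')\big)$ simultaneously to $\Var(f(X))$ and to the symmetrized expression appearing on the right-hand side. Since $\E f(X)=0$ by hypothesis, the variance is just $\Var(f(X))=\E(f(X)^2)$, so the whole argument concerns manipulating products of $f$ and $F$ under the two structural assumptions: exchangeability of $(X,X')$ and antisymmetry of $F$.

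First I would use the tower property together with the defining relation $\E(F(X,X')\mid X)=f(X)$, pulling the $X$-measurable factor $f(X)$ inside the conditional expectation:
\[\E\big(f(X)F(X,X')\big)=\E\big(f(X)\,\E(F(X,X')\mid X)\big)=\E\big(f(X)^2\big)=\Var(f(X)).\]
This identifies the variance with $\E(f(X)F(X,X'))$, and it is the only place where the defining property of $F$ is used.

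Next I would symmetrize using exchangeability and antisymmetry. Exchangeability gives $\E(g(X,X'))=\E(g(X',X))$ for any integrable $g$; applying this to $g(x,x')=f(x)F(x,x')$ and then invoking $F(X',X)=-F(X,X')$ yields
\[\E\big(f(X)F(X,X')\big)=\E\big(f(X')F(X',X)\big)=-\E\big(f(X')F(X,X')\big).\]
Adding $\E(f(X)F(X,X'))$ to both sides and halving gives
\[\E\big(f(X)F(X,X')\big)=\tfrac{1}{2}\E\big((f(X)-f(X'))F(X,X')\big),\]
and combining with the first computation produces exactly the claimed formula $\Var(f(X))=\tfrac{1}{2}\E((f(X)-f(X'))F(X,X'))$.

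The argument is genuinely short and contains no serious obstacle beyond bookkeeping; the only point requiring care is \emph{integrability}, needed to justify both interchanging expectation with conditioning in the first step and the exchangeability swap in the second. This is ensured by the standing hypotheses $\E(f(X)^2)<\infty$ and the finiteness of $\Delta(X)$: the right-hand side is absolutely convergent precisely because $\E|(f(X)-f(X'))F(X,X')|=2\,\E\Delta(X)$, so I would state this integrability check explicitly before performing the interchanges, after which the identity follows immediately.
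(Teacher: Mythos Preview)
The paper does not give its own proof of this statement; it is quoted verbatim from Chatterjee's thesis as a known result and used as a tool. Your argument is correct and is essentially the standard proof: compute $\E(f(X)F(X,X'))$ two ways, once via the tower property and $\E(F(X,X')\mid X)=f(X)$, and once via the exchangeability/antisymmetry swap, then equate.

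One small caveat on your integrability remark: the hypothesis recorded in the paper is only that $\Delta(X)<\infty$ almost surely, not that $\E\Delta(X)<\infty$, so the sentence ``$\E|(f(X)-f(X'))F(X,X')|=2\E\Delta(X)$ is finite'' is not literally justified by the stated assumptions. This does not affect the correctness of the identity in the cases where it is applied later (where the relevant quantities are bounded), but if you want the write-up to be self-contained you should either add $\E\Delta(X)<\infty$ as a hypothesis or note that the identity is to be read whenever the right-hand side is integrable.
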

\begin{thm}[Theorem 3.14 of \cite{Cth}]\label{thm314}
For any positive integer $p$, we have
\[\E\left([f(X)-\E(f)]^{2p}\right)\le (2p-1)^p\E(\Delta(X)^p).\]
\end{thm}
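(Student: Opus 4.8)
The plan is to follow Chatterjee's exchangeable-pairs recursion, which bootstraps a bound on the $2p$-th moment of $f(X)$ from itself via a single symmetrization identity. First I would record the identity that drives everything. Since $(X,X')$ is exchangeable and $F$ is antisymmetric, for any $g$ with sufficient integrability we have $\E(F(X,X')g(X)) = \E(F(X',X)g(X')) = -\E(F(X,X')g(X'))$, where the first equality swaps the pair and the second uses antisymmetry. Adding $\E(F(X,X')g(X))$ to both sides gives $\E(F(X,X')g(X)) = \tfrac{1}{2}\E(F(X,X')(g(X)-g(X')))$. Conditioning on $X$ and invoking $\E(F(X,X')\mid X)=f(X)$ converts the left-hand side into $\E(f(X)g(X))$, yielding the master identity $\E(f(X)g(X)) = \tfrac{1}{2}\E(F(X,X')(g(X)-g(X')))$.

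Next I would specialize to $g=f^{2p-1}$ (legitimate once $f$ is known to have enough moments), so that $\E(f(X)^{2p}) = \tfrac{1}{2}\E\big(F(X,X')(f(X)^{2p-1}-f(X')^{2p-1})\big)$. To return the difference of powers to a multiple of $f(X)-f(X')$, I would use the elementary inequality $|a^{2p-1}-b^{2p-1}|\le \tfrac{2p-1}{2}|a-b|\,(|a|^{2p-2}+|b|^{2p-2})$, which follows from the factorization $a^{2p-1}-b^{2p-1}=(a-b)\sum_{j=0}^{2p-2}a^j b^{2p-2-j}$ together with weighted AM--GM applied to each monomial. Substituting and bounding $F$ by $|F|$ gives $\E(f^{2p})\le \tfrac{2p-1}{4}\E\big(|F(X,X')|\,|f(X)-f(X')|\,(|f(X)|^{2p-2}+|f(X')|^{2p-2})\big)$. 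Exchangeability forces the two terms in the last factor to contribute equally, and then conditioning on $X$ introduces $\Delta(X)=\tfrac{1}{2}\E(|F(X,X')(f(X)-f(X'))|\mid X)$, producing the clean recursion $\E(f(X)^{2p})\le (2p-1)\,\E\big(|f(X)|^{2p-2}\Delta(X)\big)$.

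Finally I would close the recursion by Hölder's inequality with conjugate exponents $\tfrac{p}{p-1}$ and $p$, giving $\E(|f|^{2p-2}\Delta)\le \E(f^{2p})^{(p-1)/p}\,\E(\Delta^p)^{1/p}$. Writing $A=\E(f^{2p})$ and $B=\E(\Delta^p)$, the recursion becomes the self-bounding inequality $A\le (2p-1)A^{(p-1)/p}B^{1/p}$; dividing by $A^{(p-1)/p}$ and raising to the $p$-th power yields $A\le (2p-1)^pB$, which is precisely the claim. The main obstacle is not any individual computation but the self-improving character of this last step: dividing by $A^{(p-1)/p}$ is only valid after one knows $A=\E(f^{2p})<\infty$, and one must also verify at each stage that the expectations entering Hölder and the conditioning are finite. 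I would handle this by first establishing the bound for bounded $f$ (where all moments are trivially finite) and then passing to the general case by truncation and monotone convergence. As a consistency check, the case $p=1$ drops out of the master identity directly, reproducing the variance formula of Theorem \ref{thm32}, and so the power inequality above is only needed for $p\ge 2$, where $2p-1\ge 3$ and it indeed holds.
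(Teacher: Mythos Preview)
Your proposal is correct and is essentially Chatterjee's original argument for this result. Note that the paper does not supply its own proof of this theorem: it is stated there purely as a citation of Theorem~3.14 of \cite{Cth}, so there is no in-paper proof to compare against, and your derivation reproduces the exchangeable-pairs recursion from that reference.
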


\begin{thm}[Theorem 3.3 of \cite{Cth}]\label{thm33}
If $\Delta(X)\le C$ almost surely, then for any $\theta\in \R$,
$\E(\econst^{\theta f(X)})\le \exp(\theta^2 C/2)$, and
\[\PP(|f(X)-\E (f(X))|\ge t)\le 2\exp\left(\frac{-t^2}{2C}\right).\]
\end{thm}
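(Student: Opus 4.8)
The plan is to control the moment generating function $m(\theta):=\E(\econst^{\theta f(X)})$ via a differential inequality, and then extract the tail bound by a standard Chernoff argument. The concentration statement is an immediate corollary of the exponential-moment bound $\E(\econst^{\theta f(X)})\le \exp(\theta^2 C/2)$, so essentially all of the work lies in establishing the latter.

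First I would differentiate in $\theta$ to get $m'(\theta)=\E(f(X)\econst^{\theta f(X)})$; this requires first checking that $m(\theta)$ is finite and smooth, which I would handle (if necessary) by a truncation argument, using the integrability assumptions together with $\Delta(X)\le C$. Applying the defining identity $\E(F(X,X')\mid X)=f(X)$ turns this into $m'(\theta)=\E(F(X,X')\econst^{\theta f(X)})$. The crucial manipulation is to symmetrise: because $(X,X')$ is exchangeable and $F$ is antisymmetric, swapping the two coordinates gives $\E(F(X,X')\econst^{\theta f(X)})=-\E(F(X,X')\econst^{\theta f(X')})$, whence
\[ m'(\theta)=\tfrac12\,\E\!\left(F(X,X')\bigl(\econst^{\theta f(X)}-\econst^{\theta f(X')}\bigr)\right). \]
I expect this to be the main obstacle, in the sense that it is the one genuinely clever step; everything downstream is estimation.

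Next I would bound the integrand with the elementary inequality $|\econst^{a}-\econst^{b}|\le \tfrac12|a-b|(\econst^{a}+\econst^{b})$ (verified by a short one-variable calculus argument), taken at $a=\theta f(X)$, $b=\theta f(X')$. Passing the absolute value inside the expectation and using exchangeability a second time to merge the terms $\econst^{\theta f(X)}$ and $\econst^{\theta f(X')}$, the conditional factor $\tfrac12\E(|F(X,X')(f(X)-f(X'))|\mid X)$ emerges and is recognised as exactly $\Delta(X)$. This gives
\[ |m'(\theta)|\le |\theta|\,\E\!\left(\Delta(X)\,\econst^{\theta f(X)}\right), \]
and the hypothesis $\Delta(X)\le C$ then yields the differential inequality $|m'(\theta)|\le C|\theta|\,m(\theta)$.

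Finally, writing $\phi(\theta):=\log m(\theta)$, I would use $\phi(0)=0$ together with $|\phi'(\theta)|=|m'(\theta)|/m(\theta)\le C|\theta|$ and integrate from $0$ to $\theta$ to obtain $\phi(\theta)\le C\theta^2/2$, i.e.\ $\E(\econst^{\theta f(X)})\le \exp(\theta^2 C/2)$ for every $\theta\in\R$. The tail bound then follows by Markov's inequality applied to $\econst^{\theta f(X)}$ (for $\theta>0$) and to $\econst^{-\theta f(X)}$ (for $\theta<0$), optimising the free parameter at $\theta=t/C$ and summing the two one-sided estimates to produce the factor $2$; note that since $\E(f(X))=0$ the event $|f(X)-\E(f(X))|\ge t$ is just $|f(X)|\ge t$.
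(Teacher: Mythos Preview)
Your proof is correct and is precisely Chatterjee's original argument: symmetrise the derivative of the moment generating function using exchangeability and the antisymmetry of $F$, apply the elementary bound $|\econst^a-\econst^b|\le\tfrac12|a-b|(\econst^a+\econst^b)$, recognise $\Delta(X)$, and integrate the resulting differential inequality. Note, however, that the paper does not give its own proof of this statement at all; it is simply quoted as Theorem~3.3 of \cite{Cth} and used as a black box, so there is no comparison to make beyond observing that your argument reproduces the cited source.
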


\begin{thm}[Theorem 3.13 of \cite{Cth}]\label{thm313}
Let $r(L):= \frac{\log \E(\econst^{L\Delta(X)})}{L}$. Then for any $L>0$ such that $r(L)<\infty$, we have
\[\PP(|f(X)-\E (f(X))|\ge t)\le 2\exp\left(\frac{-t^2}{2r(L)+4tL^{-1/2}}\right).\]
\end{thm}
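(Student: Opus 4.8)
The plan is to run Chatterjee's exchangeable--pairs argument at the level of the moment generating function $m(\theta):=\E(\econst^{\theta f(X)})$, and to replace the almost--sure bound on $\Delta(X)$ used in Theorem~\ref{thm33} by a bound through the exponential moment $\E(\econst^{L\Delta(X)})$. First I would record the antisymmetry identity: since $(X,X')$ is exchangeable and $F$ is antisymmetric, for any $g$ one has $\E(F(X,X')g(X))=\tfrac12\E\big(F(X,X')(g(X)-g(X'))\big)$. Taking $g=\econst^{\theta f}$ and using $\E(F(X,X')\mid X)=f(X)$ gives $m'(\theta)=\tfrac12\E\big(F(X,X')(\econst^{\theta f(X)}-\econst^{\theta f(X')})\big)$. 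For $\theta>0$ I would then apply the convexity estimate $|\econst^{a}-\econst^{b}|\le\tfrac12|a-b|(\econst^{a}+\econst^{b})$ and symmetrise the two exponential terms by exchangeability; together with the definition of $\Delta$ this yields the basic inequality $m'(\theta)\le\theta\,\E(\Delta(X)\econst^{\theta f(X)})$. The lower tail is handled identically by passing from $(f,F)$ to $(-f,-F)$, which leaves $\Delta$ unchanged and accounts for the factor $2$ in the final bound.

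The crucial step is to control $\E(\Delta(X)\econst^{\theta f(X)})$ without a uniform bound on $\Delta$. Here I would invoke the Gibbs variational (Donsker--Varadhan) inequality: for any $L>0$ and any $g\ge 0$, $\E(\Delta\,g)\le\tfrac1L\,\mathrm{Ent}(g)+r(L)\,\E(g)$, where $r(L)=\tfrac1L\log\E(\econst^{L\Delta})$ and $\mathrm{Ent}(g)=\E(g\log g)-\E(g)\log\E(g)$. With $g=\econst^{\theta f}$ we have $\mathrm{Ent}(\econst^{\theta f})=\theta m'(\theta)-m(\theta)\log m(\theta)$, so combining with the previous step and writing $H:=\log m$ (so $H(0)=0$) gives the linear first--order differential inequality
\[
H'(\theta)\Big(1-\tfrac{\theta^{2}}{L}\Big)\;\le\;-\tfrac{\theta}{L}\,H(\theta)+\theta\,r(L),\qquad 0\le\theta<\sqrt{L}.
\]

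Finally I would solve this by the integrating factor $(L-\theta^{2})^{-1/2}$; the extremal (equality) solution is $H(\theta)=r(L)\,L\big(1-\sqrt{1-\theta^{2}/L}\big)=\dfrac{r(L)\,\theta^{2}}{1+\sqrt{1-\theta^{2}/L}}$, and a standard differential--inequality comparison (Gronwall) shows that $H$ is dominated by it, the comparison being favourable because the coefficient of $H$ on the right is negative. A Chernoff bound $\PP(f(X)\ge t)\le\exp(-\theta t+H(\theta))$ optimised over $0\le\theta<\sqrt{L}$ then gives a Bennett--type inequality; using the elementary estimate $\sqrt{1-\theta^{2}/L}\ge 1-\theta/\sqrt{L}$, i.e.\ $1+\sqrt{1-\theta^{2}/L}\ge 2\big(1-\theta/(2\sqrt{L})\big)$, one bounds $H(\theta)$ by the Bernstein shape $\dfrac{r(L)\,\theta^{2}}{2\,(1-\theta/(2\sqrt{L}))}$, whose Chernoff optimum is exactly a denominator of the form $2r(L)+c\,tL^{-1/2}$ with $c\le 4$, so the stated inequality follows. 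Note that letting $L\to\infty$ recovers $H(\theta)\le r(L)\theta^{2}/2$ and hence Theorem~\ref{thm33}, a useful consistency check.

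I expect the main obstacle to be the third step: the passage from almost--sure control of $\Delta$ to control through its exponential moment. The entropy/variational inequality is precisely what makes the unbounded case work, and getting its constant right is what ultimately fixes the shape $2r(L)+4tL^{-1/2}$ of the denominator. The remaining work — solving the differential inequality and carrying out the final Bernstein--type relaxation and Chernoff optimisation — is routine but must be done carefully to keep the constants in the stated form.
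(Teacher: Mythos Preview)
The paper does not give its own proof of this statement: Theorem~\ref{thm313} is simply quoted from Chatterjee's thesis \cite{Cth} as a tool, with no argument supplied, so there is no paper proof to compare against.

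That said, your proposal is essentially Chatterjee's original argument and is correct. The chain of steps --- the exchangeable-pair identity $m'(\theta)=\tfrac12\E\bigl(F(X,X')(\econst^{\theta f(X)}-\econst^{\theta f(X')})\bigr)$, the bound $m'(\theta)\le\theta\,\E(\Delta(X)\econst^{\theta f(X)})$ via $|\econst^{a}-\econst^{b}|\le\tfrac12|a-b|(\econst^{a}+\econst^{b})$ and symmetrisation, the Gibbs variational inequality to pass from $\Delta$ to $r(L)$ and the entropy of $\econst^{\theta f}$, and the resulting differential inequality for $H=\log m$ --- is exactly the method of \cite{Cth}. Your integration via the factor $(L-\theta^{2})^{-1/2}$ and the identification of the extremal $H(\theta)=r(L)L\bigl(1-\sqrt{1-\theta^{2}/L}\bigr)$ are correct. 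In fact, carrying out the Bernstein relaxation and Chernoff optimisation carefully yields the denominator $2r(L)+tL^{-1/2}$, which is sharper than the stated $2r(L)+4tL^{-1/2}$; the latter is simply a convenient weakening, so your ``$c\le 4$'' claim is valid. The only minor point to watch is the a~priori finiteness of $m(\theta)$ on $[0,\sqrt{L})$ needed to justify the differential-inequality manipulation, which Chatterjee handles by a standard truncation argument.
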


Now we show how to find $F(x,y)$ for a given $f(x)$ (based on Section 4 of \cite{Cth}).
First, notice, that an exchangeable pair $(X,X')$ induces a Markov kernel $P$, defined as
\[P(x,A):=\PP(X'\in A|X=x) \text{ for every }x\in \Omega, \text{ and every measurable }A\subset \Omega.\]
Conversely, for a reversible Markov kernel $P$ on $\Omega$ with stationary distribution $\pi$, we define an exchangeable pair as $X\sim \pi$, and $\PP(X'\in A|X=x):=P(x,A)$.
The following lemma explains the construction of $F(x,y)$ (this is a straightforward extension of Lemma 4.1 of \cite{Cth}).
\begin{lemma}[Construction of $F(x,y)$ from $f(x)$]\label{Fxylemma}
Let $X$, $X'$ and $P$ as above. Let $f:\Omega\to \R$ be a measurable function with $\E (f(X))=0$. Suppose that for every $x,y \in \Omega$, there is a constant $L(x,y)< \infty$ such that $L(y,x)=L(x,y)$,
\begin{equation}\label{eqlemmaLcond}\sum_{k=0}^{\infty} |P^k f(x)-P^k f(y)|\le L(x,y), \text { and that }\E(L(X,X')|X)<\infty \text{ almost surely}.\end{equation}
Then the function
\begin{equation}\label{Fxyeq}
F(x,y)=\sum_{k=0}^{\infty}(P^k f(x)-P^k f(y))
\end{equation}
satisfies $F(x,y)=-F(y,x)$ and $\E(F(X,X')|X)=f(X)$.
\end{lemma}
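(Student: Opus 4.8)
The plan is to prove the two conclusions separately, the antisymmetry being immediate and the identity $\E(F(X,X')\mid X)=f(X)$ being the substantive part. First I would observe that the series \eqref{Fxyeq} defining $F(x,y)$ converges absolutely for every pair $(x,y)$, since $\sum_{k\ge 0}|P^k f(x)-P^k f(y)|\le L(x,y)<\infty$ by hypothesis; and because swapping $x$ and $y$ negates every summand, $F(x,y)=-F(y,x)$ follows at once. So $F$ is well defined and antisymmetric, and the only work is the conditional-expectation identity.

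For that identity I would condition on $X=x$, so that $X'\sim P_x$, and interchange the infinite sum with the conditional expectation. This interchange is justified by
\[\sum_{k\ge 0}\E\big(|P^k f(x)-P^k f(X')|\;\big|\;X=x\big)\le \E\big(L(x,X')\mid X=x\big)=\E\big(L(X,X')\mid X=x\big)<\infty,\]
valid for $\pi$-almost every $x$. Using the Markov property in the form $\E\big(P^k f(X')\mid X=x\big)=(P^{k+1}f)(x)$, the sum telescopes, with partial sums equal to $f(x)-(P^{N+1}f)(x)$. Absolute convergence of the telescoping series forces $\ell(x):=\lim_{N}(P^{N+1}f)(x)$ to exist for $\pi$-almost every $x$, giving $\E(F(X,X')\mid X=x)=f(x)-\ell(x)$. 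Moreover $\ell$ is constant: fixing one point $x_0$ at which $\ell(x_0)$ exists, the bound $\sum_k|P^k f(x_0)-P^k f(y)|\le L(x_0,y)<\infty$ forces $P^kf(y)-P^kf(x_0)\to 0$ for \emph{every} $y$, so $(P^kf)(y)\to \ell(x_0)=:c$ for all $y$.

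It then remains to show $c=0$. The cleanest route uses the standing assumptions $\E_{\pi}(f)=0$ and $f\in L^2(\pi)$: by stationarity and Jensen, $\|P^k f\|_{2,\pi}\le \|f\|_{2,\pi}$, so the family $\{P^k f\}_k$ is bounded in $L^2(\pi)$ and hence uniformly integrable; combined with the pointwise convergence $P^k f\to c$, Vitali's theorem yields $L^1(\pi)$ convergence, whence $\int P^k f\,\mathrm{d}\pi\to c$. But $\int P^k f\,\mathrm{d}\pi=\E_{\pi}(f)=0$ for every $k$ by stationarity, so $c=0$ and $\E(F(X,X')\mid X)=f(X)$ $\pi$-almost surely. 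An alternative, quicker argument pins down $c$ directly: exchangeability of $(X,X')$ together with the antisymmetry of $F$ gives $\E\,F(X,X')=-\E\,F(X,X')=0$, while taking $\E_{\pi}$ of the displayed identity gives $\E\,F(X,X')=\E_{\pi}(f)-c=-c$.

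The step I expect to be the main obstacle is the passage from the finite-state setting of \cite{Cth} to a general Polish space, namely making the measure-theoretic bookkeeping rigorous: justifying the interchange of summation and conditional expectation, ensuring the almost-everywhere qualifications propagate correctly, and — if one prefers the exchangeability shortcut — verifying that $F(X,X')$ is genuinely integrable before invoking the cancellation $\E\,F(X,X')=0$. The attractive feature of this plan is that it sidesteps any ergodic argument for $P^k f\to 0$ pointwise: the summability hypothesis alone forces $\ell$ to be a constant, and then either uniform integrability (via the $L^2$ bound) or the antisymmetry constraint fixes that constant at zero. This is precisely where the symmetry assumption $L(x,y)=L(y,x)$ and the mean-zero assumption $\E_{\pi}(f)=0$ are used.
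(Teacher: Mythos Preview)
Your argument is correct and follows essentially the same route as the paper: telescope $\E(P^kf(X)-P^kf(X')\mid X)=P^kf(X)-P^{k+1}f(X)$, use the summability hypothesis (via dominated convergence) to pass to the limit, observe from $\sum_k|P^kf(x)-P^kf(y)|<\infty$ that the limit $\ell$ is constant, and pin that constant at zero via $\E_\pi(f)=0$. The paper compresses the last step into the one-line ``the expected value of both sides is $0$'', whereas you spell out the justification (uniform integrability from the ambient $L^2$ assumption, or the exchangeability cancellation), which is a genuine improvement in rigor.
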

\begin{proof}
We have $\E(P^k f(X)-P^k f(X')|X)=P^k f(X)-P^{k+1} f(X)$, and thus
\begin{equation}\label{eqsumpk}\sum_{k=0}^{N}\E(P^k f(X)-P^k f(X')|X)=f(X)-P^{N+1}f(X).\end{equation}
Now by \eqref{eqlemmaLcond}, and Lebesgue's dominated convergence theorem, the left hand side will converge to a limit as $N\to \infty$.
For the right hand side, we have $P^{N+1}f(y)-P^{N+1}f(x)\to 0$ by \eqref{eqlemmaLcond} for any $x,y\in \Omega$. The expected value of both sides of \eqref{eqsumpk} is 0, so $\lim_{N\to \infty}P^{N+1}f(x)=0$ for every $x\in \Omega$, and the claim of the lemma follows.
\end{proof}

\subsection{Concentration of Lipschitz functions under the stationary distribution}

We start with the variance bounds.

\begin{proof}[Proof of Theorem \ref{Varianceboundthm}]
The proof of this result is similar to the proof of Proposition 
32 of \cite{Ollivier2}. Assume first that $\|f\|_{\infty}<\infty$, then $\Var(f)<\infty$. Now we are going to show that if $\kappa_{\Sigma}^c<\infty$, then $\Var(P^k f)\to 0$ as $k\to \infty$. Let $B_r$ be a ball of radius $r$ centred at some point in $\Omega$, then we can write
\[\Var (P^k f)=\frac{1}{2}\int\int(P^k f(x)-P^k f(y))\mathrm{d} \pi(x) \mathrm{d} \pi(y)\le 
2(1-\kappa_k)^2 r^2 + 2\|f\|_{\infty}^2 \pi(\Omega\setminus B_r).\]
If we set $r=(1-\kappa_k)^{-1/2}$, then this will tend to 0 as $k\to \infty$, since $\kappa_{\Sigma}^c<\infty$ implies that $1-\kappa_k\to 0$. Moreover, if $\kappa_{\Sigma}^c=\infty$, then there is nothing to prove. By simple algebra, one can show that
\[\Var (f)= \Var (P f) + \int_{x\in \Omega} \Var_{P_x}(f)\mathrm{d}\pi(x),\]
and then using the fact that $\Var(P^k f)\to 0$, we obtain
\[\Var (f)= \sum_{k=0}^{\infty} \int_{x\in \Omega} \Var_{P_x}(P^k f)\mathrm{d}\pi(x).\]
In the above summation, $\mtx{P}^k (f)$ is $(1-\kappa_k)$-Lipschitz, so by the definition of the local dimension $n(x)$, we have $\Var_{P_x}(\mtx{P}^k (f))\le (1-\kappa_k)^2 \sigma^2(x)/n(x)$, and \eqref{nonrevvareq} follows under the boundedness assumption. Finally, the $\|f\|_{\infty}=\infty$ case can be handled by a limiting argument.
\end{proof}

Now we prove concentration inequalities for the reversible case.
\begin{proof}[Proof of Theorem \ref{revconcthm}]
Based on Lemma \ref{Fxylemma} one can show that
$F(x,y)=\sum_{k=0}^{\infty}(P^k f(x)-P^k f(y))$ satisfies the antisymmetry and $\E(F(X,X')|X)=f(X)$ conditions, and thus the exchangeable pair method is applicable. With this choice of $F$, we have
\begin{align}
\nonumber\Delta(X)&=\frac{1}{2}\E\left(\left.\left|\sum_{k=0}^{\infty}(P^k f(X)-P^k f(X'))\right| \cdot |f(X)-f(X')|\, \right|X\right)\\
\label{deltaxboundsigmaxeq}&\le \frac{1}{2}\E\left(\left. \sum_{k=0}^{\infty}(1-\kappa_k(X,X')) d(X,X')^2\right|X\right)\le \frac{1}{2}\E\left(\kappa_{\Sigma}^c(X,X')d(X,X')^2|X\right).
\end{align}
The above quantity can be further bounded by $\frac{1}{2}\kappa_{\Sigma}^c \hat{\sigma}^2_{\max}$. Based on this, Theorem \ref{thm33} implies inequality \eqref{revconceq1}, and the bound
\begin{equation}\label{fmomgenineq1}
\E\left(\econst^{\theta f(X)}\right)\le \exp\left(\theta^2 \kappa_{\Sigma}^c \hat{\sigma}^2_{\max}/4\right).
\end{equation}

Now we turn to the proof of the second claim of the theorem. Let $g(x)=V(x)-\E(V(X))$. Notice that by applying \eqref{fmomgenineq1} to $g/\|V\|_{\Lip}$ in the place of $f$, we obtain that for any $L>0$,
\[\E(\econst^{L\Delta(X)})\le \E\left(\econst^{LV(X)/2}\right)\le  \econst^{L \E(V(X))/2}\cdot \econst^{L^2 \|V\|_{\Lip}^2 \kappa_{\Sigma}^c \hat{\sigma}_{\max}^2/16}.\]
By choosing $L$ as stated, we obtain that $\E(\econst^{L\Delta(X)})= \econst^{L \E(V(X))}$, and thus 
$r(L)=E(V(X))$, and inequality \eqref{revconceq2} follows by Theorem \ref{thm313}.
\end{proof}
Our next proof is the moment bound for reversible chains.
\begin{proof}[Proof of Theorem \ref{Revmomentboundthm}]
From \eqref{deltaxboundsigmaxeq}, we have $\Delta(X)\le \frac{1}{2}\kappa_{\Sigma}^c\hat{\sigma}(X)^2$, and applying Theorem \ref{thm314} leads to this result.
\end{proof}

Now we prove concentration bounds without using reversibility.

The proof of Theorem \ref{nonrevconcthm} is based on the following two lemmas (the first one is a slight variation of Lemma 38 of \cite{Ollivier2}).
\begin{lemma}\label{nonrevlemma}
Let $\varphi:\Omega \to \R$ be an $\alpha$-Lipschitz function. Assume that $\lambda\le 1/(3\sigma_{\infty})$. For $r\in \R$, let $g(r):=\econst^{(2/3)r}\cdot r^2/2$. Then for $x\in \Omega$, we have
\[(\mtx{P}\econst^{\lambda \varphi})(x)\le \exp\left(\lambda \mtx{P} \varphi(x) + \lambda^2 \frac{\sigma(x)^2}{n(x)}\cdot g(\alpha)\right).\]
\end{lemma}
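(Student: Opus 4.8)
The plan is to reduce the statement to a one–dimensional moment generating function estimate for the random variable $Y:=\varphi(Z)$ with $Z\sim P_x$, exploiting that $Y$ has bounded oscillation while its variance is controlled by the local dimension. First I would record the two inputs that link the geometric quantities to $Y$. Writing $m:=\mtx{P}\varphi(x)=\E(Y)$, the $\alpha$-Lipschitz property of $\varphi$ together with $\diam\mathrm{Supp}\,P_x=2\sigma_{\infty}(x)\le 2\sigma_{\infty}$ gives the pointwise oscillation bound $|Y-m|\le 2\alpha\sigma_{\infty}$. On the other hand, since $\varphi/\alpha$ is $1$-Lipschitz on $\mathrm{Supp}\,P_x$, the definition $n(x)=\sigma(x)^2/\sup\{\Var_{P_x}f\}$ yields
\[\Var_{P_x}(\varphi)\le \alpha^2\sup\{\Var_{P_x}f:\ f\ 1\text{-Lipschitz on }\mathrm{Supp}\,P_x\}=\alpha^2\,\frac{\sigma(x)^2}{n(x)}.\]
If $\alpha=0$ then $\varphi$ is constant on the support and the claim is trivial, so I assume $\alpha>0$, and I take $\lambda\ge 0$ (the relevant range; when $\sigma_{\infty}=\infty$ the hypothesis forces $\lambda=0$).

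Next I would expand the moment generating function of the centred variable $W:=\lambda(Y-m)$. Here $\E(W)=0$, and using $\E(W^k)\le \E(|W|^k)$ for $\lambda\ge 0$ together with $|Y-m|^k\le (2\alpha\sigma_{\infty})^{k-2}|Y-m|^2$ for $k\ge 2$, I get
\[\E(\econst^{W})=1+\sum_{k\ge 2}\frac{\E(W^k)}{k!}\le 1+\lambda^2\Var_{P_x}(\varphi)\sum_{k\ge 2}\frac{u^{k-2}}{k!},\qquad u:=2\alpha\sigma_{\infty}\lambda.\]
The series equals $h(u):=(\econst^{u}-1-u)/u^2$, and the hypothesis $\lambda\le 1/(3\sigma_{\infty})$ forces $u\le 2\alpha/3$. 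After applying $1+t\le \econst^{t}$, only an elementary scalar inequality is needed to conclude.

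That scalar inequality is the one genuinely non-routine step, and it is where the exact constant $g(\alpha)=\econst^{(2/3)\alpha}\alpha^2/2$ is produced: I claim $h(u)\le \tfrac12\econst^{u}$ for $u\ge 0$, equivalently $2(\econst^{u}-1-u)\le u^2\econst^{u}$. I would prove it by setting $\phi(u):=u^2\econst^{u}-2\econst^{u}+2+2u$ and checking $\phi(0)=\phi'(0)=0$ and $\phi''(u)=\econst^{u}u(u+4)\ge 0$, so $\phi\ge 0$ on $[0,\infty)$. Combining $h(u)\le \tfrac12\econst^{u}\le \tfrac12\econst^{2\alpha/3}$ with the variance bound gives
\[\E(\econst^{W})\le \exp\!\left(\lambda^2\,\frac{\sigma(x)^2}{n(x)}\cdot \frac{\alpha^2}{2}\econst^{2\alpha/3}\right)=\exp\!\left(\lambda^2\,\frac{\sigma(x)^2}{n(x)}\,g(\alpha)\right),\]
and multiplying by $\econst^{\lambda m}=\econst^{\lambda\mtx{P}\varphi(x)}$ yields the stated bound. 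I expect the main difficulty to be bookkeeping rather than conceptual: one must route the oscillation bound so that the factor $(2\alpha\sigma_{\infty})^{k-2}$ converts, via $\lambda\le 1/(3\sigma_{\infty})$, into the cutoff $u\le 2\alpha/3$, and one must verify that the scalar inequality is sharp enough to recover exactly $g(\alpha)$ rather than a larger constant.
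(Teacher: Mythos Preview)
Your proof is correct and arrives at exactly the constant $g(\alpha)$ in the statement. The route, however, differs from the paper's. The paper uses the second-order Taylor--Lagrange remainder $\E g(Y)\le g(\E Y)+\tfrac12(\sup g'')\Var Y$ applied to $g(y)=\econst^{\lambda y}$, which yields directly
\[
(\mtx{P}\econst^{\lambda\varphi})(x)\le \econst^{\lambda(\mtx{P}\varphi)(x)}+\tfrac{\lambda^2}{2}\bigl(\sup_{\mathrm{Supp}\,P_x}\econst^{\lambda\varphi}\bigr)\Var_{P_x}\varphi,
\]
and then bounds the sup by $\econst^{\lambda(\mtx{P}\varphi)(x)+2\lambda\sigma_\infty\alpha}$, factors out, and finishes with $1+t\le \econst^t$. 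No auxiliary scalar inequality is needed. Your argument instead expands the full exponential series, controls each moment $\E|W|^k$ by $(2\alpha\sigma_\infty)^{k-2}\Var_{P_x}\varphi$, and then must prove the extra inequality $h(u)=(\econst^u-1-u)/u^2\le \tfrac12\econst^u$; the $\phi''(u)=\econst^u u(u+4)\ge 0$ computation is correct. Both proofs use the same two geometric inputs (oscillation $\le 2\alpha\sigma_\infty$, variance $\le \alpha^2\sigma(x)^2/n(x)$) and reach the same final constant; the paper's Taylor--Lagrange shortcut is a bit more economical, while your series method makes the passage from bounded oscillation plus variance control to the MGF bound more transparent and could be adapted to give sharper intermediate estimates before the final $h(u)\le\tfrac12\econst^u$ step.
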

\begin{proof}
The proof is a simple modification of the original argument. First, by the Taylor expansion with Lagrange remainder, it follows that for any smooth function $g$, and any real valued random variable $Y$, $\E g(Y)\le g(\E Y) + \frac{1}{2}\left(\sup g''\right)\Var Y$. For $g(Y)=\econst^{\lambda Y}$, this yields that for any $x\in \Omega$,
\[\left(\mtx{P} \econst^{\lambda \varphi}\right)(x)=\E_{P_x} \econst^{\lambda \varphi}\le \econst^{\lambda (\mtx{P} \varphi)(x)}+\frac{\lambda^2}{2}\left(\sup_{\mathrm{Supp} P_x} e^{\lambda \varphi}\right)\Var_{P_x} \varphi.\]
By Definition \ref{manydefs}, we have $\mathrm{diam Supp}P_x\le 2\sigma_{\infty}$, and using the $\alpha$-Lipschitz property of $\varphi$, 
$\sup_{\mathrm{Supp} P_x} \varphi \le \E_{P_x} \varphi+ 2\sigma_{\infty}\alpha$. Therefore, we have
\[\left(\mtx{P} \econst^{\lambda \varphi}\right)(x)\le \econst^{\lambda (\mtx{P} \varphi)(x)}+\frac{\lambda^2}{2} \econst^{\lambda (\mtx{P}\varphi)(x) +2\lambda \sigma_{\infty}\alpha}\cdot \Var_{P_x} \varphi.\]
By Definition  \ref{manydefs}, we have $\Var_{P_x} \varphi\le \alpha^2\frac{\sigma(x)^2}{n_x}$. Finally, for $\lambda\le 1/(3\sigma_{\infty})$, we have $e^{2\lambda \sigma_{\infty}\alpha}\le e^{(2/3) \alpha}$, thus
\[\left(\mtx{P} \econst^{\lambda \varphi}\right)(x)\le \econst^{\lambda (\mtx{P} \varphi)(x)}\cdot \left( 1+ \lambda^2 \frac{\sigma(x)^2}{n_x} g(\alpha)\right)\le \exp\left(\lambda \mtx{P} \varphi(x) + \lambda^2 \frac{\sigma(x)^2}{n(x)} g(\alpha)\right).\]
\end{proof}
\begin{lemma}\label{tmaxlambdamaxlemma}
Suppose that a function $f:\Omega\to \R$ satisfies that for $0\le \lambda\le \lambda_{\max}$,
\[\E(\exp(\lambda f))\le \exp(\lambda \E(f)+\lambda^2 C).\]
Let $t_{\max}:=2C\lambda_{\max}$, then for $0\le t\le t_{\max}$, we have
\[\PP(f(X)\ge \E(f)+t)\le \exp\left(-\frac{t^2}{4C}\right),\]
and for $t\ge t_{\max}$, we have
\[\PP(f(X)\ge \E(f)+t)\le \exp\left(-\frac{t_{\max}^2}{4C}-(t-t_{\max})\lambda_{\max}\right).\]
\end{lemma}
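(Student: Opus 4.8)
The plan is to run the standard Chernoff–Cramér exponential moment argument, treating $\lambda$ as a free parameter over the allowed range $[0,\lambda_{\max}]$ and optimising it separately in the two regimes of $t$. First I would center the function and apply Markov's inequality to the exponential: for any $\lambda\in[0,\lambda_{\max}]$,
\[
\PP(f(X)\ge \E(f)+t)=\PP\bigl(\econst^{\lambda(f(X)-\E f)}\ge \econst^{\lambda t}\bigr)\le \econst^{-\lambda t}\,\E\bigl(\econst^{\lambda(f(X)-\E f)}\bigr)\le \exp\bigl(-\lambda t+\lambda^2 C\bigr),
\]
where the last inequality is exactly the hypothesis of the lemma. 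So everything reduces to minimising the exponent $h(\lambda):=-\lambda t+\lambda^2 C$ over the constrained interval $\lambda\in[0,\lambda_{\max}]$.

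Next I would carry out this one-variable optimisation. The unconstrained minimiser of $h$ is $\lambda^\star=t/(2C)$, with minimal value $h(\lambda^\star)=-t^2/(4C)$. The constraint $\lambda^\star\le\lambda_{\max}$ is equivalent to $t\le 2C\lambda_{\max}=t_{\max}$. Hence in the regime $0\le t\le t_{\max}$ the optimal $\lambda^\star$ is admissible, and substituting it yields the Gaussian bound $\exp(-t^2/(4C))$ directly.

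In the complementary regime $t>t_{\max}$ the unconstrained optimum falls outside $[0,\lambda_{\max}]$; since $h$ is a convex parabola decreasing on $[0,\lambda^\star]$ and $\lambda^\star>\lambda_{\max}$, the function $h$ is still decreasing throughout $[0,\lambda_{\max}]$, so its minimum is attained at the boundary point $\lambda=\lambda_{\max}$. Plugging in gives the tail bound $\exp(-\lambda_{\max}t+\lambda_{\max}^2 C)$, and it only remains to rewrite this in the stated form. Using $\lambda_{\max}=t_{\max}/(2C)$, a direct computation gives
\[
-\lambda_{\max}t+\lambda_{\max}^2 C=-\frac{t_{\max}t}{2C}+\frac{t_{\max}^2}{4C}=-\frac{t_{\max}^2}{4C}-(t-t_{\max})\lambda_{\max},
\]
which is precisely the claimed exponential bound.

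There is no genuine obstacle here: the argument is the textbook Chernoff bound with a box constraint on the tilt parameter. The only thing requiring care is the final algebraic identity, namely recognising that the boundary value of the exponent, expressed via $\lambda_{\max}=t_{\max}/(2C)$, collapses to $-t_{\max}^2/(4C)-(t-t_{\max})\lambda_{\max}$; this is the step where one must be careful to verify that the Gaussian exponent and the linear exponent match continuously at $t=t_{\max}$ (they do, since both equal $-t_{\max}^2/(4C)$ there), so that the two regimes glue into a single coherent tail estimate.
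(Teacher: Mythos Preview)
Your proof is correct and is exactly the standard Markov inequality argument that the paper invokes; the paper's own proof is just the one-line remark ``This follows by the standard Markov inequality argument,'' which your proposal spells out in full detail.
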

\begin{proof}
This follows by the standard Markov inequality argument.
\end{proof}

\begin{proof}[Proof of Theorem \ref{nonrevconcthm}]
Fix some $\lambda\in [0,1/(3\sigma_{\infty})]$. Let $f_0:=f$, and for $k\ge 0$, define $f_{k+1}$ as
\[f_{k+1}(x):=\mtx{P}f_k(x)+\lambda g(\|f_k\|_{\Lip}) \cdot S_{\max}.\]
Lemma \ref{nonrevlemma} shows that
$\left(\mtx{P} \econst^{\lambda f_k}\right)(x)\le \econst^{\lambda f_{k+1}(x)}$, and thus $\left(\mtx{P}^k \econst^{\lambda f}\right)(x)\le \econst^{\lambda f_{k}(x)}$.

Since $S_{\max}$ is a constant, we have $\|f_k\|_{\Lip}=\Lip(\mtx{P}^k f)\le (1-\kappa_k)\|f\|_{\Lip}$, and 
\begin{equation}\label{eqstar}f_k(x)\le \mtx{P}^k f(x)+\lambda S_{\max} \sum_{i=0}^{k-1}g((1-\kappa_i)\|f\|_{\Lip}).\end{equation}
By taking the limit $k\to \infty$, we get that 
\[\lim_{k\to \infty}f_k(x)\le \E_{\pi}(f)+\frac{\lambda}{4}D_{\max},\]
and thus 
\[\E_{\pi}(\econst^{\lambda f})=\lim_{k\to \infty} \left(\mtx{P^k} \econst^{\lambda f}\right)(x)\le \econst^{\lambda \E_{\pi}(f)+\lambda^2 D_{\max}/4}.\]
We obtain the bounds \eqref{nonrevgaussbound} and \eqref{nonrevexpbound}  from Lemma \ref{tmaxlambdamaxlemma}.
\end{proof}

\begin{proof}[Proof of Theorem \ref{nonrevconcthm2}]
Similarly to the proof of Theorem 33 of \cite{Ollivier2}, we will bound the moment generating function $\E_{\pi} e^{\lambda f}$ based on the fact that it is the limit of $\mtx{P}^k e^{\lambda f}$, which can be bounded recursively.

Fix some $\lambda>0$, and let $\hat{f}(x):=\hat{f}_0(x):=f(x)/(2M\|f\|_{\Lip})$, and for $k\ge 0$, define $\hat{f}_{k+1}$ as
\[\hat{f}_{k+1}(x):=\mtx{P}\hat{f}_k(x)+\lambda g(\|\hat{f}_k\|_{\Lip}) \cdot S(x).\]
Then Lemma \ref{nonrevlemma} shows that
$(\mtx{P} \econst^{\lambda\hat{f}_k})(x)\le \econst^{\lambda \hat{f}_{k+1}(x)}$, and thus $(\mtx{P}^k \econst^{\lambda\hat{f}})(x)\le \econst^{\lambda \hat{f}_{k}(x)}$.
Now for $k\ge 1$, $\hat{f}_k(x)$ as defined above can be expressed as
\begin{align}\nonumber\hat{f}_k(x)&=\mtx{P}^k(\hat{f})(x)+\lambda\sum_{i=1}^k g(\|\hat{f}_{i-1}\|_{\Lip})\mtx{P}^{k-i}(S)(x),\quad\text{ thus }\\
\label{hatfklipbound}\|\hat{f}_k\|_{\Lip}&\le (1-\kappa_k)\|\hat{f}\|_{\Lip}+\lambda\|S\|_{\Lip}\sum_{i=1}^{k} (1-\kappa_{k-i}) g(\|\hat{f}_{i-1}\|_{\Lip}).
\end{align}
By taking the limit $k\to \infty$, we obtain that
\begin{equation}\label{explambdahatfeq}
\E_{\pi}(\exp(\lambda \hat{f}))\le \lim_{k\to \infty}\exp(\lambda \hat{f}_k(x))\le \exp\left(\lambda \E_{\pi}(\hat{f})+ \lambda^2\left(\sum_{i=0}^{\infty}g(\|\hat{f}_{i}\|_{\Lip})\right)\cdot \E_{\pi}(S)\right).
\end{equation}
In order to proceed, we need to bound $\|\hat{f}_{i}\|_{\Lip}$ and $\sum_{i=0}^{\infty}g(\|\hat{f}_{i}\|_{\Lip})$. 

Using \eqref{kappakleq2}, one can show that $\sum_{j=0}^{\infty}(1-\kappa_j)\le MK/\kappa_K$. 
Define 
\begin{equation}\label{eqlambdamax}
\lambda_{\max}:=\min(1/(3\sigma_{\infty}),\kappa_K/(2KM\|S\|_{\Lip})).
\end{equation}
It is easy to see that $g(x)\le x^2$ for $0\le x\le 1$. This and \eqref{hatfklipbound} implies that for $\lambda\in[0,\lambda_{\max}]$, for any $j\ge 0$, we have $\|\hat{f}_j\|_{\Lip}\le 1$. This is not quite sharp yet, and now we will obtain more precise bounds. Let $F_0:=1$, and for $k\ge 1$, let
\[F_k:=\frac{1}{2}(1-\kappa_K)^{\lfloor k/K\rfloor}+\frac{1}{2}\frac{\kappa_K}{K}\cdot \sum_{i=1}^{k}(1-\kappa_K)^{\lfloor k-i/K\rfloor}\cdot F_{i-1}^2.\]
Then it follows from \eqref{hatfklipbound} that for $\lambda\in [0,\lambda_{\max}]$, $k\ge 0$, we have $\|f_k\|_{\Lip}\le F_k$. Let $G_0:=1, G_1:=(1-\kappa_K/2)$, and for $k\ge 2$, let
\[G_k:=\frac{1}{2}(1-\kappa_K)^{k}+\frac{1}{2}\kappa_K\cdot \left((1-\kappa_K)^{k}+\sum_{j=1}^{k-1}(1-\kappa_K)^{j} G_{k-1-j}^2\right).\]
Then it is easy to see that for $k\ge 0$, we have $F_k\le G_{\lfloor k/K\rfloor+1}$, and by mathematical induction it follows that $G_k\le (1-\kappa_K/2)^{k-1}$ for any $k\ge 1$.
This implies that for $\lambda\in [0,\lambda_{\max}]$,
\begin{equation}\label{hatfkbound}
\|\hat{f}_k\|_{\Lip}\le (1-\kappa_K/2)^{\lfloor k/K\rfloor}, \text{ and }\sum_{i=0}^{\infty}g(\|\hat{f}_{i}\|_{\Lip})\le \frac{K}{\kappa_K-\kappa_K^2/4}.
\end{equation}
Using these two inequalities and \eqref{explambdahatfeq}, we obtain that for $\lambda\in [0,\lambda_{\max}]$,
\begin{align}\label{explambdahatfeq2}
&\E_{\pi}(\exp(\lambda \hat{f}))\le \exp\left(\lambda \E_{\pi}(\hat{f})+ \lambda^2 \cdot \frac{K}{\kappa_K-\kappa_K^2/4}\cdot \E_{\pi}(S)\right),
\end{align}
which implies that for $\lambda\in \left[0,\lambda_{\max}'\right]$, we have
\begin{equation}\label{explambdahatfeq2}
\E_{\pi}(\exp(\lambda f))\le \exp\left(\lambda \E_{\pi}(f) + \lambda^2 \cdot \frac{4M^2\|f\|_{\Lip}^2 K}{\kappa_K-\kappa_K^2/4}\cdot \E_{\pi}(S)\right).
\end{equation}
The tail bounds now follow by Lemma \ref{tmaxlambdamaxlemma}.
\end{proof}

\subsection{Concentration bounds for MCMC empirical averages}\label{secproofmcmc}
We start with the proof of the bias bound.
\begin{proof}[Proof of Proposition \ref{biasboundprop}]
It follows from the definitions that 
\[|\E_{q\mtx{P}^{j}}(f)-\E_{\pi}(f)|\le \|f\|_{\Lip}W_1\left(q\mtx{P}^{j},\pi\right)\le (1-\kappa_j) \|f\|_{\Lip} W_1(q,\pi),\]
summing up from $j=t_0+1$ to $N$, and using $1-\kappa_{t_0+i}\le (1-\kappa_{t_0})(1-\kappa_{i})$ leads to this result.
\end{proof}

We will use the following lemma (a generalisation of Lemma 9 of \cite{Ollivier3}) in the proof of the variance bound (Theorem \ref{MCMCVarRiccithm}).
\begin{lemma}\label{mcmcvarlemma}
For any $\NN\ge 1$, $x\in \Omega$, $f:\Omega\to \R$,
\[\mtx{P}^{\NN}\left(f^2\right)(x)-\left(\mtx{P}^{\NN}(f)(x)\right)^2 \le \|f\|_{\Lip}^2\cdot \sum_{k=0}^{\NN-1}(1-\kappa_{k})^2\cdot \mtx{P}^{\NN-1-k}\left(\frac{\sigma^2}{n}\right)(x).\]
\end{lemma}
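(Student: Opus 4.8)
The left-hand side is precisely the variance of $f$ under the $\NN$-step law $P^{\NN}_x$, namely $\Var_{P^{\NN}_x}(f) = \mtx{P}^{\NN}(f^2)(x) - (\mtx{P}^{\NN}f(x))^2$. The plan is to establish the bound by induction on $\NN$, peeling off one step at a time via the law of total variance for Markov chains.

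For the recursion, I would write, for $\NN \ge 1$ and with $h := \mtx{P}^{\NN-1}f$,
\[\mtx{P}^{\NN}(f^2)(x) - (\mtx{P}^{\NN}f(x))^2 = \mtx{P}\left[\mtx{P}^{\NN-1}(f^2) - (\mtx{P}^{\NN-1}f)^2\right](x) + \left(\mtx{P}(h^2)(x) - (\mtx{P}h(x))^2\right),\]
which is just adding and subtracting $\mtx{P}[(\mtx{P}^{\NN-1}f)^2](x)$. The first term is $\mtx{P}[\Var_{P^{\NN-1}_{\cdot}}(f)](x)$, the average over the first step of the $(\NN-1)$-step conditional variance, while the second term is exactly $\Var_{P_x}(\mtx{P}^{\NN-1}f)$, the one-step variance of the already pushed-forward function $h$.

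The new term is controlled by two facts used throughout the paper. First, Kantorovich--Rubinstein duality together with Definition \ref{defmultistepRicci} shows that $\mtx{P}^k f$ is $(1-\kappa_k)\|f\|_{\Lip}$-Lipschitz, since $|\mtx{P}^k f(a) - \mtx{P}^k f(b)| \le \|f\|_{\Lip} W_1(P^k_a, P^k_b) \le \|f\|_{\Lip}(1-\kappa_k)d(a,b)$. Second, the definition of the local dimension $n(\cdot)$ in Definition \ref{manydefs} gives, for any $c$-Lipschitz $\varphi$, the bound $\Var_{P_y}(\varphi) \le c^2 \sigma^2(y)/n(y)$. Applying these with $k = \NN - 1$ yields $\Var_{P_x}(\mtx{P}^{\NN-1}f) \le (1-\kappa_{\NN-1})^2\|f\|_{\Lip}^2 \sigma^2(x)/n(x)$, which is exactly the $k = \NN-1$ summand (recall $\mtx{P}^{0} = \mathrm{Id}$). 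Feeding the inductive hypothesis for $\NN - 1$ into the averaged term promotes each factor $\mtx{P}^{\NN-2-k}(\sigma^2/n)$ to $\mtx{P}^{\NN-1-k}(\sigma^2/n)$ and reproduces the summands for $0 \le k \le \NN - 2$; combining with the new term gives the full sum. The base case $\NN = 1$ is just $\Var_{P_x}(f) \le \|f\|_{\Lip}^2 \sigma^2(x)/n(x)$, which is the local-dimension bound since $\kappa_0 = 0$.

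The calculation is essentially routine, so I do not anticipate a genuine obstacle; the one point needing care is the index bookkeeping in the re-indexing step, where applying the outer $\mtx{P}$ raises each transition power by one while leaving the curvature factors $(1-\kappa_k)^2$ untouched, which is precisely what aligns the hypothesis with the target sum. An equivalent and perhaps more transparent route is the martingale-difference decomposition $f(X_{\NN}) - \E f(X_{\NN}) = \sum_{k=1}^{\NN} \left(\mtx{P}^{\NN-k}f(X_k) - \mtx{P}^{\NN-k+1}f(X_{k-1})\right)$ conditioned on $X_0 = x$: orthogonality of the increments turns $\Var(f(X_{\NN}))$ into $\sum_k \E[\Var_{P_{X_{k-1}}}(\mtx{P}^{\NN-k}f)]$, and the same two bounds, after substituting $j = \NN - k$, finish the proof.
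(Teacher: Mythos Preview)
Your proof is correct and takes essentially the same approach as the paper: the paper writes the variance $\mtx{P}^{\NN}(f^2)-(\mtx{P}^{\NN}f)^2$ as the telescoping sum $\sum_{k=0}^{\NN-1}\mtx{P}^{\NN-1-k}\bigl[\mtx{P}((\mtx{P}^k f)^2)-(\mtx{P}^{k+1}f)^2\bigr]$ and bounds each bracket by $(1-\kappa_k)^2\|f\|_{\Lip}^2\,\sigma^2/n$, which is exactly what your induction unrolls one step at a time (and what your martingale-difference alternative also reproduces).
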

\begin{proof}
We omit to note the dependence in $x$ in the following bounds. We have
\begin{align*}
&\mtx{P}^{\NN}(f^2)-\mtx{P}^{\NN}(f)=
\left[\mtx{P}^{\NN-1}\left(\mtx{P}\left(f^2\right)\right)-
\mtx{P}^{\NN-1}\left((\mtx{P}(f))^2\right)\right]+\Big[\mtx{P}^{\NN-2}\left((\mtx{P}(f))^2\right)\\
&-
\mtx{P}^{\NN-2}\left(\left(\mtx{P}^2(f)\right)^2\right)\Big]+\ldots+\left[\mtx{P}\left((\mtx{P}^{\NN-1}(f))^2\right)-\left(\mtx{P}^{\NN}(f)\right)^2\right].
\end{align*}
The result now follows by summing up the inequalities
\[\mtx{P}\left(\left(\mtx{P}^k(f)\right)^2\right)-\left(\mtx{P}^{k+1}(f)\right)^2=\Var_{P_x}(\mtx{P}^k(f))\le (1-\kappa_k)^2\|f\|_{\Lip}^2\cdot \frac{\sigma^2(x)}{n(x)}.\qedhere\]
\end{proof}

\begin{proof}[Proof of Theorem \ref{MCMCVarRiccithm}]
For technical reasons, we are going to cut the sum $Z$ into $K$ parts as $Z=Z_1+\ldots+Z_K$, and then use the bound 
\begin{equation}\label{eqVarqZbreak}\Var_{q}(Z)\le K\sum_{r=1}^{K}\Var_q\left(Z_r\right).\end{equation} For $1\le r\le K$, let $N_r:=\lfloor (N-t_0-r)/K\rfloor+1$, and
\[Z_r:=\frac{\sum_{j=1}^{N_r}f\left(X_{t_0+(j-1)K+r}\right)}{N-t_0}.\]
We are left to bound $\Var_q(Z_r)$ for each $1\le r\le K$. This is similar to the proof of Theorems 2 and 3 of \cite{Ollivier3}. For $x_1,\ldots, x_{N_r}\in \Omega$, let
\[F^{(r)}_{x_1,\ldots,x_{N_r-1}}(x_{N_r}):=\frac{\sum_{j=1}^{N_r}f\left(x_j\right)}{N-t_0},\] 
and define by downward induction for each $1\le i\le N_r-1$,
\[F^{(r)}_{x_1,\ldots,x_{i-1}}(x_i):=\int_{\Omega} F^{(r)}_{x_1,\ldots,x_{i}}(x_{i+1})P_{x_i}^K(\mathrm{d}x_{i+1}).\]
Finally, let
\[F^{(r)}_{\o}(x_1):=\int_{x\in \Omega}F^{(r)}_{x_1}(x_2)P_{x_1}^K(\mathrm{d}x_2).\]
Then by Lemma 3.2 (step 1) of \cite{JoulinPoisson}, we can show that $F^{(r)}_{x_1,\ldots,x_{i-1}}$ is Lipschitz with constant $s_i$ given by 
\begin{equation}\label{Lipschitzconstantsieq}s_i=\frac{\|f\|_{\Lip}}{N-t_0}\sum_{j=0}^{N_r-i}(1-\kappa_K)^{j}\le \frac{\|f\|_{\Lip}}{\kappa_K(N-t_0)}.\end{equation}
Now using Lemma \ref{mcmcvarlemma} in each step (with $\NN=K$), we obtain
\begin{align*}
&\E_{q}\left(Z_r^2\right)=\int_{\Omega^{N_r}}F^{(r)}_{x_1,\ldots,x_{N_r-1}}(x_{N_r})^2 P^{K}_{x_{N_r-1}}(\mathrm{d} x_{N_r})\cdot \ldots \cdot P^{K}_{x_{1}}(\mathrm{d} x_2)\cdot (q\mtx{P}^{t_0+r})(\mathrm{d} x_1)\\
&\quad \le \int_{\Omega^{N_r-1}}F^{(r)}_{x_1,\ldots,x_{N_r-2}}(x_{N_r-1})^2 P^{K}_{x_{N_r-2}}(\mathrm{d} x_{N_r-1})\cdot \ldots \cdot P^{K}_{x_{1}}(\mathrm{d} x_2)\cdot (q\mtx{P}^{t_0+r})(\mathrm{d} x_1)
\\
&\quad +\frac{\|f\|_{\Lip}^2}{\kappa_K^2(N-t_0)^2}\sum_{i=0}^{K-1}(1-\kappa_i)^2\cdot \E_{q\mtx{P}^{t_0+r+(N_r-1)K-i-1}}\left(\frac{\sigma^2}{n}\right)\le \ldots\\
&\quad \le \int_{\Omega}F^{(r)}_{\o}(x_1)^2 (q\mtx{P}^{t_0+r})(\mathrm{d} x_1)\\
&\quad +\frac{\|f\|_{\Lip}^2}{\kappa_K^2(N-t_0)^2}\sum_{i=0}^{K-1}(1-\kappa_i)^2 \sum_{k=1}^{N_r-1}\E_{q\mtx{P}^{t_0+r+kK-i-1}}\left(\frac{\sigma^2}{n}\right).
\end{align*}
Now $F^{(r)}_{\o}(x_1)$ is $\|f\|_{\Lip}/(\kappa_K (N-t_0))$ Lipschitz, so applying Lemma \ref{mcmcvarlemma} with $\NN=t_0+r$ leads to 
\begin{align*}
&\int_{\Omega}F^{(r)}_{\o}(x_1)^2 (q\mtx{P}^{t_0+r})(\mathrm{d} x_1)
\\
&\quad \le 
(\E_q(Z_r))^2+\frac{\|f\|_{\Lip}^2}{\kappa_K^2 (N-t_0)^2}\sum_{k=0}^{t_0+r-1}(1-\kappa_k)^2 \E_{q\mtx{P}^{t_0+r-1-k}}\left(\frac{\sigma^2}{n}\right),
\end{align*}
thus we have
\begin{align*}
\Var_{q}\left(Z_r\right)
&\le \frac{\|f\|_{\Lip}^2}{\kappa_K^2 (N-t_0)^2}\cdot
\Bigg[\sum_{i=0}^{K-1}(1-\kappa_i)^2\sum_{k=1}^{N_r-1}\E_{q\mtx{P}^{t_0+r+kK-i-1}}\left(\frac{\sigma^2}{n}\right) 
\\
&+ \sum_{k=0}^{t_0+r-1}(1-\kappa_k)^2 \E_{q\mtx{P}^{t_0+r-1-k}}\left(\frac{\sigma^2}{n}\right)\Bigg]\\
& \le \frac{\|f\|_{\Lip}^2}{\kappa_K^2 (N-t_0)^2}\cdot \Bigg[\sum_{i=0}^{K-1}(1-\kappa_i)^2\sum_{k=1}^{N_r-1}\E_{q\mtx{P}^{t_0+r+kK-i-1}}(S) \\
& + \sum_{k=0}^{t_0+r-1}(1-\kappa_k)^2 \E_{q\mtx{P}^{t_0+r-1-k}}(S)\Bigg].
\end{align*}
Moreover, it is easy to see that for any $j\in \N$,
\[|\E_{q\mtx{P}^{j}}(S)-\E_{\pi}(S)|\le \|S\|_{\Lip}W_1\left(q\mtx{P}^{j},\pi\right)\le (1-\kappa_j) \|S\|_{\Lip} W_1(q,\pi).\]
Based on inequality \eqref{kappakleq2}, we have 
\begin{align*}
&\sum_{i=0}^{K-1}(1-\kappa_i)^2\sum_{k=1}^{N_r-1}(1-\kappa_{t_0+r+kK-i-1})\le M^3 \cdot 
\frac{K}{\kappa_K}\cdot (1-\kappa_K)^{t_0/K - 1},\\
&\sum_{i=0}^{t_0+r-1}(1-\kappa_i)^2\le \frac{M^2 K}{\kappa_K},\text{ and}\\
&\sum_{k=0}^{t_0+r-1}(1-\kappa_k)^2 (1-\kappa_{t_0+r-1-k})\le M^3 \cdot \frac{K}{\kappa_K}\cdot (1-\kappa_K)^{t_0/K - 3},
\end{align*}
thus
\begin{align*}
&\Var_{q}\left(Z_r\right)\le \frac{\|f\|_{\Lip}^2}{\kappa_K^2 (N-t_0)^2}\cdot 
\left[\sum_{i=0}^{K-1}(1-\kappa_i)^2 (N_r-1)+\II[t_0>0]\cdot \frac{M^2 K}{\kappa_K} \right]\E_{\pi}(S)\\
&+2M^3 \cdot \frac{K}{\kappa_K}\cdot (1-\kappa_K)^{t_0/K - 3}\|S\|_{\Lip} W_1(q,\pi).
\end{align*}
The claim of the theorem now follows by \eqref{eqVarqZbreak}.
\end{proof}

\begin{proof}[Proof of Theorem \ref{MCMCconcRiccithm}]
Let $Z_r$, $N_r$, and $F_{x_1,\ldots,x_{i}}^{(r)}$ be as in the proof of Theorem \ref{MCMCVarRiccithm}. Now $Z=\sum_{r=1}^{K}Z_r$, and by Jensen's inequality,
\begin{equation}\label{MCMCJenseneq}
\E_{q}\left(\econst^{\lambda Z}\right)\le \econst^{\lambda \E_{q}Z}\frac{1}{K}\sum_{r=1}^{K} \E_{q}\left(\econst^{\lambda (Z_r-\E_q(Z_r))}\right).
\end{equation}
This inequality allows us to deduce a bound on the moment generating function of the empirical average $Z$ from bounds on the moment generating functions of $Z_r$. This is a simpler problem, and our approach is similar to the proof of Theorems 4 and 5 of \cite{Ollivier3}.

Let $\hat{C}:=\frac{1}{4M}\cdot \frac{\kappa_K(N-t_0)}{\|f\|_{\Lip}}$, and let $\hat{F}^{(r)}_{x_1,\ldots,x_{i-1}}(x_i):=\hat{C}\cdot F^{(r)}_{x_1,\ldots,x_{i-1}}(x_i)$. Since we have $\|F^{(r)}_{x_1,\ldots,x_{i-1}}\|_{\Lip}\le  \frac{\|f\|_{\Lip}}{\kappa_K(N-t_0)}$ by \eqref{Lipschitzconstantsieq}, it follows that $\|\hat{F}^{(r)}_{x_1,\ldots,x_{i-1}}\|_{\Lip}\le \frac{1}{4M}$.
Let $\hat{Z}_r:=\hat{C}\cdot Z_r$,  $\hat{Z}:=\hat{C}\cdot Z$, and let $R:=\frac{S_{\max}}{(4M)^2}\frac{\econst^{1/6}}{2}\sum_{i=0}^{K-1}(1-\kappa_i)^2$. 

Using Lemma \ref{nonrevlemma} repeatedly $\NN$ times, and the fact that $g(\alpha)\le \frac{\econst^{1/6}}{2} \alpha^2$ for $0\le \alpha\le 1/4$, we obtain that for $\lambda\in \left[0,\frac{1}{3\sigma_{\infty}}\right]$, for any $\alpha$-Lipschitz function $\varphi:\Omega\to \R$ satisfying $\alpha\le 1/(4M)$, we have
\begin{equation}\label{nonrevlemmageneq}(\mtx{P}^{\NN} \econst^{\lambda \varphi})(x)\le \exp\left(\lambda \mtx{P}^{\NN} \varphi(x) + \lambda^2 S_{\max}\cdot \frac{\econst^{1/6}}{2} \alpha^2 \sum_{i=0}^{\NN-1}(1-\kappa_i)^2\right).\end{equation}
In particular, for $\NN=K$, this implies that $(\mtx{P}^K\econst^{\lambda \varphi})(x)\le \exp\left(\lambda \mtx{P}^K \varphi(x) + \lambda^2 R\right)$. Therefore we have
\begin{align*}
&\E_{q}\left(\econst^{\lambda \hat{Z}_r}\right)=\int_{\Omega^{N_r}}\econst^{\lambda \hat{F}^{(r)}_{x_1,\ldots,x_{N_r-1}}(x_{N_r})}P_{x_{N_r-1}}^K(\mathrm{d}x_{N_r})\cdot \ldots \cdot P_{x_1}^K(\mathrm{d}x_{2})\cdot \left(q \mtx{P}^{t_0+r}\right)(\mathrm{d} x_1)\\
&\le \int_{\Omega^{N_r-1}}\econst^{\lambda \hat{F}^{(r)}_{x_1,\ldots,x_{N_r-2}}(x_{N_r-1})+\lambda^2 R}P_{x_{N_r-2}}^K(\mathrm{d}x_{N_r-1})\cdot \ldots \cdot P_{x_1}^K(\mathrm{d}x_{2})\cdot \left(q \mtx{P}^{t_0+r}\right)(\mathrm{d} x_1)\\
& \le \ldots \le \int_{\Omega}\econst^{\lambda \hat{F}^{(r)}_{\o}(x_1)+\lambda^2 (N_r-1)R}(q\mtx{P}^{t_0+r})(\mathrm{d}x_1).
\end{align*}
Here $\hat{F}^{(r)}_{\o}(x_1)$ is $1/(4M)$ Lipschitz, so using \eqref{nonrevlemmageneq} with $\NN=t_0+r$ leads to 
\[\int_{\Omega}\econst^{\lambda F^{(r)}_{\o}(x_1)}(q\mtx{P}^{t_0+r})(\mathrm{d}x_1)\le
\exp(\lambda \E_{q}(Z_r))\cdot \exp\left(\lambda^2 S_{\max}\frac{e^{1/6}}{2(4M)^2}\sum_{i=0}^{t_0+r-1}(1-\kappa_i)^2\right).\]
If $t_0=0$, we have $S_{\max}\frac{e^{1/6}}{2(4M)^2}\sum_{k=0}^{t_0+r-1}(1-\kappa_i)^2\le R$. When $t_0>0$, using \eqref{kappakleq}, we obtain
\[S_{\max}\frac{e^{1/6}}{2(4M)^2}\sum_{i=0}^{t_0+r-1}(1-\kappa_i)^2\le 
R\sum_{k=0}^{\infty}(1-\kappa_K)^{2k}\le \frac{R}{2\kappa_K-\kappa_K^2}\le \frac{R}{\kappa_K}.\]
This implies that for $\lambda\in [0, 1/(3\sigma_{\infty})]$,
\[\E_{q}\left(\econst^{\lambda \hat{Z}_r}\right)\le \exp\left(\lambda\E_{q}(\hat{Z}_r)+\lambda^2 R\left(N_r+\II[t_0>0]/\kappa_K\right)\right),\]
and as previously, using \eqref{MCMCJenseneq}, we have that for $\lambda\in [0, 1/(3K\sigma_{\infty})]$,
\[ 
\E_{q}\left(\econst^{\lambda \hat{Z}}\right)\le \exp\left(\lambda\E_{\pi}(\hat{Z})+\lambda^2 KR\left((N-t_0+K-1)+\II[t_0>0] K/\kappa_K)\right) \right).
\]
This means that for $\lambda\in \left[0, \frac{\kappa_K(N-t_0)}{12M K\sigma_{\infty}\|f\|_{\Lip}}\right]$,
\begin{align*}
&\E_{q}\left(\econst^{\lambda Z}\right)\le \exp\Bigg(\lambda\E_{q}(Z)+\\
&\lambda^2 
\cdot \frac{\econst^{1/6}}{2} \|f\|_{\Lip}^2 S_{\max}\frac{(N-t_0+K-1)+\II[t_0>0] K/\kappa_K}{(N-t_0)^2}\cdot \frac{K}{\kappa_K^2}\sum_{k=0}^{K-1}(1-\kappa_k)^2\Bigg),
\end{align*}
and the bounds follow from Lemma \ref{tmaxlambdamaxlemma}.
\end{proof}

\begin{proof}[Proof of Theorem \ref{MCMCconcRiccithm2}]
The structure of the proof is similar to the proof of Theorem \ref{MCMCconcRiccithm}. We again decompose $Z$ into a sum as $Z=\sum_{r=1}^{K}Z_r$, and use \eqref{MCMCJenseneq} to deduce a bound on the moment generating function of $Z$ from one on the moment generating function of $Z_r$. A key difference is that now we need to use a new version of the inequality \eqref{nonrevlemmageneq}, to be introduced as follows.

Using Lemma \ref{nonrevlemma} repeatedly $\NN\ge 1$ times, and taking into account the inequality \eqref{hatfkbound}, we obtain that for any $\varphi:\Omega\to \R$ satisfying that $\|\varphi\|_{\Lip}\le \frac{1}{2M}$, for any $\lambda\in [0, \min(1/(3\sigma_{\infty}), \kappa_K/(2KM\|S\|_{\Lip})]$, $x\in \Omega$, we have
\begin{equation}\label{nonrevlemmageneq2}\mtx{P}^{\NN}\left(\econst^{\lambda \varphi}\right)(x)\le \exp\left( \lambda \mtx{P}^{\NN}(\varphi)(x)+\lambda^2 \sum_{i=1}^{\NN}\left(1-\frac{\kappa_K}{2}\right)^{2\lfloor \frac{i-1}{K}\rfloor}\cdot \mtx{P}^{\NN-i}(S)(x)\right),\end{equation}
in particular,
$\mtx{P}^{K}\left(\econst^{\lambda \varphi}\right)(x)\le \exp\left( \lambda \mtx{P}^{K}(\varphi)(x)+\lambda^2 \sum_{i=0}^{K-1}\mtx{P}^{i}(S)(x)\right)$.

Let $Z_r$, $N_r$, and $F_{x_1,\ldots,x_{i}}^{(r)}$ be as in the proof of Theorem \ref{MCMCVarRiccithm}. For any $j\ge 1$, \[\left\|\lambda\sum_{k=0}^{jK-1} P^{k}(S)\right\|_{\Lip}\le \lambda MK\|S\|_{\Lip}/\kappa_K,\]
thus for $\lambda\in [0, \min(1/(3\sigma_{\infty}), \kappa_K/(4KM^2\|S\|_{\Lip}))]$, for any $1\le i\le N_r$, we have
\[\left\|\hat{F}^{(r)}_{x_1,\ldots,x_{i-1}}+\lambda\cdot \sum_{k=0}^{(N_r-i)K-1} \mtx{P}^{k}(S)\right\|_{\Lip}\le \frac{1}{2M}.\]
Now by using inequality \eqref{nonrevlemmageneq2} in each step with $\NN=K$, we obtain that
\begin{align*}
&\E_{q}\left(\econst^{\lambda \hat{Z}_r}\right)=\int_{\Omega^{N_r}}\econst^{\lambda \hat{F}^{(r)}_{x_1,\ldots,x_{N_r-1}}(x_{N_r})}P_{x_{N_r-1}}^K(\mathrm{d}x_{N_r})\cdot \ldots \cdot P_{x_1}^K(\mathrm{d}x_{2})\cdot \left(q \mtx{P}^{t_0+r}\right)(\mathrm{d} x_1)\\
&\le \int_{\Omega^{N_r-1}}\econst^{\lambda \hat{F}^{(r)}_{x_1,\ldots,x_{N_r-2}}(x_{N_r-1})+\lambda^2 \sum_{i=0}^{K-1}\mtx{P}^{i}(S)(x)}P_{x_{N_r-2}}^K(\mathrm{d}x_{N_r-1})\cdot \ldots \cdot P_{x_1}^K(\mathrm{d}x_{2})\\
&\cdot \left(q \mtx{P}^{t_0+r}\right)(\mathrm{d} x_1) \le \ldots \le \int_{\Omega}\econst^{\lambda \hat{F}^{(r)}_{\o}(x_1)+\lambda^2 \sum_{k=0}^{(N_r-1)K-1} \mtx{P}^{k}(S)(x_1)}(q\mtx{P}^{t_0+r})(\mathrm{d}x_1)\\
&\le 
\exp\Bigg(\lambda \E_{q}(\hat{Z}_r)\\
&+\lambda^2 \left[\sum_{k=t_0+r}^{(N_r-1) K+t_0+r-1} \E_{q\mtx{P}^{k}}(S)+\sum_{k=1}^{t_0+r}(1-\kappa_K/2)^{2\lfloor (i-1)/K\rfloor} \E_{q\mtx{P}^{t_0+r-i}}(S)\right]\Bigg),
\end{align*}
using inequality \eqref{nonrevlemmageneq2} with $\NN=t_0+r$ in the last step. By \eqref{kappakleq2}, we have \[|\E_{q\mtx{P}^j}(S)-\E_{\pi}(S)|\le \|S\|_{\Lip} W_1(q,\pi) (1-\kappa_j)\le M\|S\|_{\Lip} W_1(q,\pi) (1-\kappa_K)^{\lfloor j/K\rfloor},\]
which implies that
\begin{align*}
&\sum_{k=t_0+r}^{(N_r-1) K+t_0+r-1} \E_{q\mtx{P}^{k}}(S)+\sum_{k=1}^{t_0+r}(1-\kappa_K/2)^{2\lfloor (i-1)/K\rfloor} \E_{q\mtx{P}^{t_0+r-i}}(S) \\
&\le \E_{\pi}(S)\cdot \left((N-t_0)+\II[t_0>0]\cdot\frac{2K}{\kappa_K}\right)\\
&+M\|S\|_{\Lip} W_1(q,\pi) \cdot \frac{3K}{\kappa_K}\cdot\left(1-\kappa_K/2\right)^{\lfloor t_0/K\rfloor-3},
\end{align*}
and thus for $\lambda\in [0, \min(1/(3\sigma_{\infty}), \kappa_K/(4KM^2\|S\|_{\Lip}))]$,
\begin{align*}&\E_{q}\left(\econst^{\lambda \hat{Z}_r}\right) \le \exp\left(\lambda \E_{q}(\hat{Z}_r)\right)\cdot \exp\Bigg(\lambda^2 \Bigg[\E_{\pi}(S)\bigg((N-t_0)
+\II[t_0>0]\cdot\frac{2K}{\kappa_K}\bigg)
\\
&+M\|S\|_{\Lip} W_1(q,\pi) \cdot \frac{3K}{\kappa_K}\cdot\left(1-\kappa_K/2\right)^{\lfloor t_0/K\rfloor-3}\Bigg]\Bigg).
\end{align*}
Now using \eqref{MCMCJenseneq}, and using $\hat{Z}=\hat{C}Z$, we obtain that for 
\[\lambda\in \left[0, \frac{1}{4MK}\cdot \frac{\kappa_K(N-t_0)}{\|f\|_{\Lip}} \cdot\min(1/(3\sigma_{\infty}), \kappa_K/(4KM^2\|S\|_{\Lip}))\right],\]
we have
\begin{align*}&\E_{q}\left(\econst^{\lambda Z}\right) \le \exp\left(\lambda \E_{q}(Z)\right)\\
&\cdot \exp\Bigg(\lambda^2\cdot \frac{16\|f\|_{\Lip}^2M^2 K^2}{\kappa_K^2(N-t_0)^2} \cdot \Bigg[\E_{\pi}(S)
\cdot \left((N-t_0)+\II[t_0>0]\cdot\frac{2K}{\kappa_K}\right)\\&+M\|S\|_{\Lip} W_1(q,\pi) \cdot \frac{3K}{\kappa_K}\cdot\left(1-\kappa_K/2\right)^{\lfloor t_0/K\rfloor-3}\Bigg]\Bigg).
\end{align*}
The tail bounds now follow by Lemma \ref{tmaxlambdamaxlemma}.
\end{proof}

\section*{Acknowledgements}
We thank the referee for his insightful comments and careful reading of the manuscript.
We thank Malwina Luczak, Yann Ollivier, and Laurent Veysseire for their comments. Finally, many thanks to Roland Paulin for the enlightening discussions.

\bibliographystyle{imsart-nameyear}
\bibliography{References}

\end{document}